\newtheorem{thm}{Theorem}[section] 
\newtheorem{lemma}[thm]{Lemma}
\newtheorem{prop}[thm]{Proposition}
\newtheorem{cor}[thm]{Corollary}
\newtheorem{conj}[thm]{Conjecture}
\theoremstyle{definition}
\newtheorem{remark}[thm]{Remark}
\newtheorem{example}[thm]{Example}
\newtheorem{defn}[thm]{Definition}
\DeclareMathOperator{\sh}{sh}
\DeclareMathOperator{\swap}{swap}
\DeclareMathOperator{\jdt}{jdt}
\DeclareMathOperator{\rect}{rect}
\DeclareMathOperator{\height}{height}
\DeclareMathOperator{\Gr}{Gr}
\DeclareMathOperator{\OG}{OG}
\DeclareMathOperator{\LG}{LG}
\DeclareMathOperator{\Span}{Span}
\DeclareMathOperator{\Ker}{Ker}
\DeclareMathOperator{\SO}{SO}
\DeclareMathOperator{\lis}{lis}
\DeclareMathOperator{\lds}{lds}
\def\Fl{\text{F$\ell$}}
\def\al{\alpha}
\def\alv{\alpha^\vee}
\def\be{\beta}
\def\ga{\gamma}
\def\gav{\gamma^\vee}
\def\la{\lambda}
\def\La{\Lambda}
\def\N{{\mathbb N}}
\def\Z{{\mathbb Z}}
\def\Q{{\mathbb Q}}
\def\C{{\mathbb C}}
\def\bP{{\mathbb P}}
\def\ba{{\mathbf a}}
\def\bb{{\mathbf b}}
\def\bc{{\mathbf c}}
\def\bd{{\mathbf d}}
\def\bu{{\mathbf u}}
\def\bv{{\mathbf v}}
\def\bw{{\mathbf w}}
\def\cO{{\mathcal O}}
\def\cP{{\mathcal P}}
\def\groth{{\mathfrak G}}
\def\ov{\overline}
\def\wh{\widehat}
\def\op{\mathrm{op}}
\def\equivB{{\,\,\wh\equiv\,\,}}
\def\ssm{\smallsetminus}
\def\noin{\noindent}
\newcommand{\pic}[2]{\includegraphics[scale=#1]{#2}}
\newcommand{\euler}[1]{\chi_{_{#1}}}
\newcommand{\ignore}[1]{}
\begin{document}

\title{$K$-theory of minuscule varieties}

\date{June 23, 2013}

\author{Anders Skovsted Buch}
\address{Department of Mathematics, Rutgers University, 110
  Frelinghuysen Road, Piscataway, NJ 08854, USA}
\email{asbuch@math.rutgers.edu\vspace{-2mm}}

\author{Matthew~J.~Samuel}
\email{msamuel@math.rutgers.edu}

\begin{abstract}
  Based on Thomas and Yong's $K$-theoretic jeu de taquin algorithm, we
  prove a uniform Littlewood-Richardson rule for the $K$-theoretic
  Schubert structure constants of all minuscule homogeneous spaces.
  Our formula is new in all types.  For the main examples of
  Grassmannians of type A and maximal orthogonal Grassmannians it has
  the advantage that the tableaux to be counted can be recognized
  without reference to the jeu de taquin algorithm.
\end{abstract}

\subjclass[2010]{Primary 14N15; Secondary 05E15, 19E08, 14M15}

\thanks{The first author was supported in part by NSF grants
  DMS-0906148 and DMS-1205351.}

\maketitle

\section{Introduction}\label{sec:intro}

The goal of this paper is to prove a uniform Littlewood-Richardson
rule for the $K$-theoretic Schubert structure constants of any
minuscule homogeneous space, also called a {\em minuscule variety}.
The family of minuscule varieties includes Grassmann varieties of type
A, maximal orthogonal Grassmannians, even dimensional quadric
hypersurfaces, as well as two exceptional varieties called the Cayley
plane and the Freudenthal variety.  The slightly larger family of {\em
  cominuscule varieties\/} also includes Lagrangian Grassmannians and
odd dimensional quadrics.  Several papers illustrate that many aspects
of the geometry and combinatorics of (co)minuscule varieties are
natural generalizations of Grassmannians of type A
\cite{proctor:bruhat, stembridge:fully, perrin:small*1,
  thomas.yong:combinatorial, chaput.manivel.ea:quantum*1}.

A result of Proctor \cite{proctor:bruhat} implies that the Schubert
varieties $X_\la$ in a cominuscule variety $X = G/P$ can be indexed by
order ideals $\la$ in a partially ordered set $\La_X$.  These order
ideals can be identified with shapes that generalize Young diagrams.
The Schubert structure sheaves $\cO_\la := [\cO_{X_\la}]$ form a
$\Z$-basis of the Grothendieck ring $K(X)$ of algebraic vector bundles
on $X$.  Brion has proved that the structure constants of $K(X)$ with
respect to this basis have signs that alternate with codimension
\cite{brion:positivity}.  Equivalently, there are unique non-negative
integers $c^\nu_{\la,\mu}$ for which the identity
\[
\cO_\la \cdot \cO_\mu = \sum_\nu (-1)^{|\nu|-|\la|-|\mu|}\,
c^\nu_{\la,\mu}\, \cO_\nu
\]
holds in $K(X)$, where $|\la|$ denotes the codimension of $X_\la$.
The structure constants $c^\nu_{\la,\mu}$ are generalizations of the
classical Littlewood-Richardson coefficients and describe the geometry
of intersections of Schubert varieties in $X$.

When $X$ is a Grassmann variety of type A, it was proved in
\cite{buch:littlewood-richardson} that the structure constant
$c^\nu_{\la,\mu}$ is equal to the number of certain combinatorial
objects called {\em set-valued tableaux}.  So far no generalization of
this rule has been found for other homogeneous spaces.  More recently,
Thomas and Yong have defined a $K$-theoretic version of
Sch\"utzenberger's {\em jeu de taquin\/} algorithm and conjectured
that, if $X$ is any minuscule variety, then the structure constant
$c^\nu_{\la,\mu}$ is equal to the number of increasing tableaux of
skew shape $\nu/\la$ that rectify to a {\em superstandard tableau\/}
$S_\mu$ by using $K$-theoretic jeu de taquin slides.  Thomas and Yong
proved their conjecture for Grassmannians of type A in
\cite{thomas.yong:jeu}, while the conjecture for maximal orthogonal
Grassmannians follows from combinatorial results of Clifford, Thomas,
and Yong \cite{clifford.thomas.ea:k-theoretic} together with a
$K$-theoretic Pieri formula proved by Buch and Ravikumar
\cite{buch.ravikumar:pieri}.  It turns out that the conjecture is also
true for the Cayley Plane and for even dimensional quadrics, but it
fails for the Freudenthal variety, see Remark~\ref{rmk:all_urt} and
Example~\ref{ex:e8_fails} below.  In this paper we show that, if the
superstandard tableau $S_\mu$ is replaced with a {\em minimal
  increasing tableau\/} $M_\mu$, then Thomas and Yong's conjecture
gives a correct formula for the $K$-theoretic Schubert structure
constants of all minuscule varieties.  We note that the possibility of
replacing the superstandard tableau $S_\mu$ with something else was
mentioned in \cite{thomas.yong:jeu}.

In contrast to Sch\"utzenberger's classical jeu de taquin algorithm,
the $K$-theoretic version operates with several empty boxes at the
same time.  Another important difference is that the $K$-theoretic
algorithm is not independent of choices, in the sense that different
choices of initial empty boxes may lead to several different
rectifications of the same skew tableau.  Thomas and Yong proved that
any superstandard tableau $S_\mu$ associated to a Grassmann variety of
type A has the property that, if any skew tableau $T$ has $S_\mu$ as a
rectification, then $S_\mu$ is the only rectification of $T$
\cite{thomas.yong:jeu}.  We will call such a tableau a {\em unique
  rectification target}.  The main combinatorial result of
\cite{clifford.thomas.ea:k-theoretic} states that superstandard
tableaux associated to maximal orthogonal Grassmannians are also
unique rectification targets.  

In this paper we attempt a systematic study of unique rectification
targets.  In particular, we prove that any minimal increasing tableau
associated to a minuscule variety is a unique rectification target.
We also prove several alternative versions of the
Littlewood-Richardson rule.  For example, if $T_0$ is {\em any\/}
increasing tableau of shape $\mu$, then we prove that
$c^\nu_{\la,\mu}$ is the number of increasing tableaux $T$ of shape
$\nu/\la$ for which the {\em greedy rectification\/} of $T$ is equal
to $T_0$.  This rectification is obtained by consistently choosing
{\em all\/} inner corners as initial empty boxes during the
rectification process.

When $X$ is a Grassmannian of type A, we also prove versions of the
Littlewood-Richardson rule that are formulated without reference to
the jeu de taquin algorithm.  To each permutation $w$ there is a {\em
  stable Grothendieck polynomial\/} $G_w$
\cite{lascoux.schutzenberger:structure, fomin.kirillov:grothendieck}
which can be identified with an element of the $K$-theory ring $K(X)$
\cite{buch:littlewood-richardson}.  For example, if $w$ is the
Grassmannian permutation associated to a partition $\mu$, then $G_w$
is equal to $\cO_\mu$.  It was proved in \cite{buch.kresch.ea:stable}
that the coefficient of $\cO_\nu$ in the expansion of $G_w$ is equal
to $(-1)^{|\nu|-\ell(w)}$ times the number of increasing tableaux $T$
of shape $\nu$ for which the {\em Hecke permutation\/} $w(T)$ is equal
to $w^{-1}$.  This permutation $w(T)$ is defined as the Hecke product
of the simple transpositions given by the row word of $T$.  We prove
more generally that the coefficient of $\cO_\nu$ in the expansion of
$\cO_\la \cdot G_w$ is equal to $(-1)^{|\nu/\la|-\ell(w)}$ times the
number of increasing tableaux $T$ of shape $\nu/\la$ such that $w(T) =
w^{-1}$.  By choosing $w$ to be the Grassmannian permutation for
$\mu$, this formula specializes to express the structure constants
$c^\nu_{\la,\mu}$.  Similar identities are obtained for maximal
orthogonal Grassmannians.

The main technical tool introduced in this paper is an equivalence
relation on words of integers that we call {\em $K$-Knuth
  equivalence}.  This relation is defined by using a mixture of the
basic relations defining the Hecke monoid and the plactic algebra.  We
prove that if $T$ and $T'$ are increasing tableaux defined on skew
Young diagrams, then $T'$ can be obtained from $T$ by a sequence of
forward and reverse $K$-theoretic jeu de taquin slides if and only if
the the row words of $T$ and $T'$ are $K$-Knuth equivalent.  This
result implies that the Hecke permutation $w(T)$ is an invariant under
jeu de taquin slides.  It also implies that the length of the longest
strictly increasing subsequence of the row word of a tableau is an
invariant, as proved earlier by Thomas and Yong
\cite{thomas.yong:jeu}.

Our paper is organized as follows.  Section~\ref{sec:comin} explains
Proctor's bijection between order ideals and Schubert classes and
gives self-contained proofs of its main properties.
Section~\ref{sec:kjdt} explains Thomas and Yong's $K$-theoretic jeu de
taquin algorithm and gives several examples.  We also define a {\em
  combinatorial $K$-theory ring\/} associated to any partially ordered
set that satisfies certainly properties.  In Section~\ref{sec:ktheory}
we then show that, if $X$ is any minuscule variety, then the
Grothendieck ring $K(X)$ is isomorphic to the combinatorial $K$-theory
ring associated to $\La_X$.  We then use the geometry of $X$ to
establish additional combinatorial facts, including a criterion for
unique rectification targets and new versions of the
Littlewood-Richardson rule.  Section~\ref{sec:kknuth} defines the
$K$-Knuth equivalence relation, as well as {\em resolutions\/} of
increasing tableaux with empty boxes, and {\em reading words\/} of
these resolutions.  The main result in this section states that all
resolutions of a tableau with empty boxes have $K$-Knuth equivalent
reading words.  Section~\ref{sec:grass} is devoted to Grassmannians of
type A.  We first use the $K$-Knuth equivalence of resolutions to
establish that the $K$-Knuth class of the row word of an increasing
tableau is an invariant under jeu de taquin slides.  We apply this to
prove that minimal increasing tableaux are unique rectification
targets, as well as a criterion that generalizes Thomas and Yong's
result that superstandard tableaux are unique rectification targets.
We also prove the above mentioned formula for products involving a
stable Grothendieck polynomial.  Finally, we prove that the
rectification of a minimal increasing tableau of skew shape is always
a minimal increasing tableau.  This fact implies that {\em maximal
  increasing tableaux\/} are also unique rectification targets, which
in turn is used in the proof of the Littlewood-Richardson rule based
on greedy rectifications.  Section~\ref{sec:maxog} finally gives
analogues of the results of Section~\ref{sec:grass} for maximal
orthogonal Grassmannians.  While many of these results can be
translated from type A, we also define a weak version of the $K$-Knuth
relation that governs the jeu de taquin algorithm for maximal
orthogonal Grassmannians.

We thank Robert Proctor and John Stembridge for answers to questions
about the history of minuscule posets.  We also thank Alex Yong for
inspiring discussions.


\section{Schubert varieties in cominuscule spaces}
\label{sec:comin}

Let $X = G/P$ be a homogeneous space defined by a semisimple complex
linear algebraic group $G$ and a parabolic subgroup $P$.  Fix also a
maximal torus $T$ and a Borel subgroup $B$ such that $T \subset B
\subset P \subset G$.  Let $R$ be the corresponding root system, with
positive roots $R^+$ and simple roots $\Delta \subset R^+$.  Let $W =
N_G(T)/T$ be the Weyl group of $G$ and $W_P = N_P(T)/T \subset W$ the
Weyl group of $P$.  Each element $w \in W$ defines a Schubert variety
$X(w) = \ov{Bw.P} \subset X$, which depends only on the coset of $w$
in $W/W_P$.  Let $W^P \subset W$ be the set of minimal length
representatives for the cosets in $W/W_P$.  An element $w \in W$
belongs to $W^P$ if and only if $w.\be \in R^+$ for each $\be \in
\Delta$ with $s_\be \in W_P$.  The set $W^P$ is in one-to-one
correspondence with the $B$-stable Schubert varieties in $X$, and for
$w \in W^P$ we have $\dim X(w) = \ell(w)$.

Assume that $P$ is a maximal parabolic subgroup of $G$.  Then $P$
corresponds to a unique simple root $\gamma \in \Delta$ such that
$W_P$ is generated by all simple reflections except $s_\ga$.  Given
any root $\al \in R$, let $\ga(\al)$ denote the coefficient of $\ga$
when $\al$ is written as a linear combination of simple roots.  The
simple root $\ga$ and the variety $X$ are called {\em cominuscule\/}
if $|\ga(\al)| \leq 1$ for all $\al \in R$, and $\ga$ and $X$ are
called {\em minuscule\/} if $\ga^\vee$ is a cominuscule simple root in
the dual root system $R^\vee = \{ \alv \mid \al \in R \}$.  Here $\alv
= \frac{2\,\al}{(\al,\al)}$ denotes the coroot of $\al$.

The collection of all minuscule and cominuscule varieties is listed in
Table~\ref{tab:comin}.  Notice that all minuscule varieties are also
cominuscule, possibly for a different Lie type.  For example, the
minuscule odd orthogonal Grassmannian $\OG(n,2n+1)$ of type $B_n$ is
isomorphic to the even orthogonal Grassmannian $\OG(n+1,2n+2)$ of type
$D_{n+1}$ which is both minuscule and cominuscule.  In this section we
assume that $\ga$ is a cominuscule simple root.

\begin{table}[!ht]
\caption{Minuscule and cominuscule varieties}
\centering
\begin{tabular}{|l|l|}
\hline
\multicolumn{2}{|l|}{\vspace{-3mm}}\\
\multicolumn{2}{|l|}{\hspace{20mm}\pic{.2}{nodeM} \ \ Minuscule simple root. }\\
\multicolumn{2}{|l|}{\hspace{20mm}\pic{.2}{nodeC} \ \ Cominuscule simple root. }\\
\multicolumn{2}{|l|}{\hspace{20mm}\pic{.2}{nodeCM} \ \ Minuscule and cominuscule simple root. }\\
\multicolumn{2}{|l|}{\vspace{-3mm}}\\
\hline
&\vspace{-3mm}\\
\ Type $A_n$: & \ Type $B_n$: \\ 
&\vspace{-1mm}\\
\ \pic{.2}{dynkinAn} & \ \pic{.2}{dynkinBn} \\
&\vspace{-3mm}\\
\ $A_n/P_m = \Gr(m,n+1)$ & \ $B_n/P_1 = Q^{2n-1}$ \ Odd quadric. \\
\ Grassmannian of type A. &\vspace{-2mm}\\
& \ $B_n/P_n = \OG(n,2n+1)$ \\
& \ Max.\ orthogonal Grassmannian. \\
&\vspace{-3mm}\\
\hline
&\vspace{-3mm}\\
\ Type $C_n$: & \ Type $D_n$: \\
&\vspace{-1mm}\\
\ \pic{.2}{dynkinCn} &\\
&\vspace{-3mm}\\
\ $C_n/P_1 = \bP^{2n-1}$ \ Projective space. \ &\\
&\vspace{-2mm}\\
\ $C_n/P_n = \LG(n,2n)$ &\\
\ Lagrangian Grassmannian. &\vspace{-23mm}\\
& \ \pic{.2}{dynkinDn} \\
&\vspace{-3mm}\\
& \ $D_n/P_1 = Q^{2n-2}$ \ Even quadric. \\
&\vspace{-2mm}\\
& \ $D_n/P_{n-1} \cong D_n/P_n = \OG(n,2n)$ \ \\
& \ Max.\ orthogonal Grassmannian. \\
&\vspace{-3mm}\\
\hline
&\vspace{-3mm}\\
\ Type $E_6$: & \ Type $E_7$: \\
&\vspace{-1mm}\\
\ \pic{.2}{dynkinE6} & \ \pic{.2}{dynkinE7} \\
&\vspace{-3mm}\\
\ $E_6/P_1 \cong E_6/P_6$ \ Cayley plane. & \ $E_7/P_7$ \ Freudenthal variety.
\vspace{-3mm}\\
&\\
\hline
\end{tabular}
\label{tab:comin}
\end{table}


A result of Proctor \cite{proctor:bruhat} shows that the Bruhat order
on $W^P$ is a distributive lattice.  If we let $\La_X$ denote the
partially ordered set of join irreducible elements of this lattice,
then Birkhoff's representation theorem gives a bijection between $W^P$
and the set of order ideals in $\La_P$.  Proctor identified $W^P$ with
the set of weights of a minuscule representation, and in this way
$\La_X$ is identified with a subset of the positive roots $R^+$.
Stembridge later gave a different construction of $\La_X$ as the heap
of the longest element of $W^P$ \cite{stembridge:fully}.  In this
expository section we start by defining the set $\La_X$ using the
description given in \cite[Thm.~11]{proctor:bruhat}, then prove
directly from the cominuscule condition on $\ga$ that the order ideals
of $\La_X$ are in one-to-one correspondence with the Schubert classes
of $X$.  An alternative treatment can be found in \cite[\S
2]{thomas.yong:combinatorial}.  Let $\leq$ denote the partial order on
$\Span_\Z(\Delta)$ defined by $\al \leq \be$ if and only if $\be-\al$
is a sum of simple roots.

\begin{defn}\label{def:shape}
  Let $\La_X = \{ \al \in R \mid \ga(\al)=1 \}$ be the set of positive
  roots for which the coefficient of $\ga$ is non-zero, and equip this
  set with the restriction of the partial order $\leq$ on
  $\Span_\Z(\Delta)$.  A subset $\la \subset \La_X$ is called a {\em
    straight shape\/} if it is a lower order ideal, i.e.\ if $\al \in
  \la$ then $\la$ also contains all roots $\be \in \La_X$ for which
  $\be \leq \al$.  A {\em skew shape\/} is any set theoretic
  difference $\la/\mu := \la \ssm \mu$ of straight shapes $\la$ and
  $\mu$; this notation will be used only when $\mu \subset \la$.
\end{defn}

The partially ordered set $\La_X$ can be identified with a subset of
$\N^2$, where the order on $\N^2$ is defined by $[r_1,c_1] \leq
[r_2,c_2]$ if and only if $r_1\leq r_2$ and $c_1\leq c_2$.  We will
represent $\N^2$ as a grid of boxes $[r,c]$, where the first
coordinate is a row number (increasing from top to bottom) and the
second coordinate is a column number (increasing from left to right).
The shape of the resulting set of boxes $\La_X$ is displayed in
Table~\ref{tab:shapes}.  We will henceforth call the roots in
$\Lambda_X$ for {\em boxes}.

\begin{table}
  \caption{The shape $\La_X$ of a cominuscule variety $X$}
  \begin{tabular}{|l|l|}
    \hline
    &\vspace{-3mm}\\
    \ Grassmannian of type A & 
    \ Lagrangian Grassmannian \\
    &\vspace{-3mm}\\
    \ $\La_{\Gr(m,m+k)} \ = \ 
    \tableau{10}{
      {}&{}&{}&{}&{} \\
      {}&{}&{}&{}&{} \\
      {}&{}&{}&{}&{} \\
      {}&{}&{}&{}&{}
    }$
    &
    \ $\La_{\LG(n,2n)} \ = \ 
    \tableau{10}{
      {}&{}&{}&{}&{} \\
      &{}&{}&{}&{} \\
      &  &{}&{}&{} \\
      &  &  &{}&{} \\
      &  &  &  &{}
    }$
    \\
    &\vspace{-3mm}\\
    \mbox{}\hspace{13mm}($m$ rows and $k$ columns) & 
    \mbox{}\hspace{28mm}($n$ rows) \\
    &\vspace{-2mm}\\
    \hline
    &\vspace{-3mm}\\
    \ Odd quadric & 
    \ Max.\ orthogonal Grassmannian \\
    &\vspace{-3mm}\\
    \ $\La_{Q^{2n-1}} \ = \ 
    \tableau{10}{
      {}&{}&{}&{}&{}&{}&{}&{}&{}
    }$ \ 
    &
    \ $\La_{\OG(n,2n)} \ = \ 
    \tableau{10}{
      &{}&{}&{}&{} \\
      &  &{}&{}&{} \\
      &  &  &{}&{} \\
      & & & &{}
    }$ \\
    &\vspace{-3mm}\\
    & \mbox{}\hspace{26mm}($n-1$ rows) \vspace{-9mm}\\
    \mbox{}\hspace{25mm}($2n-1$ boxes) & \vspace{3mm}\\
    &\vspace{-1mm}\\
    \hline
    &\vspace{-3mm}\\
    \ Even quadric, $n$ even & 
    \ Even quadric, $n$ odd \\
    &\vspace{-3mm}\\
    \ $\La_{Q^{2n}} \ = \
    \tableau{10}{
      {}&{}&{}&{} \\
      & &{}&{}&{}&{} 
    }$
    &
    \ $\La_{Q^{2n}} \ = \ 
    \tableau{10}{
      {}&{}&{}&{}&{} \\
      &  &  &{}&{} \\
      &  &  &  &{} \\
      &  &  &  &{} \\
      &  &  &  &{}
    }$
    \\
    &\vspace{-2mm}\\
    & \mbox{}\hspace{18mm}($2n$ boxes) \vspace{-10mm}\\
    \mbox{}\hspace{15mm}($n$ boxes in each row) & \vspace{4mm}\\
    &\vspace{-1mm}\\
    \hline
    &\vspace{-3mm}\\
    \ Cayley plane &
    \ Freudenthal variety \\
    &\vspace{-3mm}\\
    \ $\La_{E_6/P_6} \ = \ 
    \tableau{10}{
      {}&{}&{}&{}\\
      &  &{}&{}&{}&{}\\
      &  &{}&{}&{}&{}\\
      & & & &{}&{}&{}&{}
    }$
    &
    \ $\La_{E_7/P_7} \ = \ 
    \tableau{10}{
      {}&{}&{}&{}&{}\\
      &  &  &{}&{}&{}&{}&{}\\
      &  &  &{}&{}&{}&{}&{}\\
      &  &  &  &  &{}&{}&{}\\
      &  &  &  &  &  &{}&{}&{}\\
      &  &  &  &  &  &{}&{}&{}\\
      &  &  &  &  &  &  &  &{}\\
      &  &  &  &  &  &  &  &{}\\
      & & & & & & & &{}
    }$ \ 
    \vspace{-1mm}\\
    &\\
    \hline
  \end{tabular}
  \label{tab:shapes}
\end{table}


We remark that there are other ways to identify $\La_X$ with a subset
of $\N^2$, and the choices made in Table~\ref{tab:shapes} are slightly
different from those made in \cite{thomas.yong:combinatorial}.  The
shapes chosen here satisfy that the longest element $w_X$ in $W_P$
acts on $\La_X$ as a 180 degree rotation for shapes in the left column
of Table~\ref{tab:shapes}, while $w_X$ acts by reflection in a
south-west to north-east diagonal for shapes in the right column of
the table.  The action of $w_X$ is related to Poincare duality, see
Corollary~\ref{cor:poincare} below.  We have also chosen the shapes so
that the number of boxes in each row decreases from top to bottom.  As
a result, if $\la \subset \La_X$ is any straight shape, then $\la$ can
be represented by the {\em partition\/} $(\la_1 \geq \la_2 \geq \dots
\geq \la_\ell)$, where $\la_r$ is the number of boxes in row $r$ of
$\la$.

One can check that for each box $\al \in \La_X$ with $\al > \ga$,
there exists a simple root $\be \in \Delta \ssm \{\ga\}$ such that
$\al-\be \in \La_X$.  This implies that $\height(\al)$, defined as the
sum of the coefficients obtained when $\al$ is written as a linear
combination of simple roots, is equal to the maximal cardinality of a
totally ordered subset of $\La_X$ with $\al$ as its maximal element.

\begin{lemma}\label{lem:incomp_box}
  Let $\al, \be \in \La_X$.  If $\al \not\leq \be$ and $\be \not\leq
  \al$, then $(\al,\be)=0$.
\end{lemma}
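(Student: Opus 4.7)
The plan is to derive a contradiction from $(\al,\be)\neq 0$ using standard properties of root systems together with the cominuscule hypothesis $|\ga(\al')|\leq 1$ for all $\al'\in R$. Since $\al$ and $\be$ are assumed incomparable, in particular $\al\neq\be$, and both are positive so $\al\neq -\be$; hence the pair $\{\al,\be\}$ is non-proportional and the usual reflection-based dichotomy applies: if $(\al,\be)>0$ then $\al-\be\in R$, and if $(\al,\be)<0$ then $\al+\be\in R$.

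First I would dispose of the case $(\al,\be)<0$: then $\al+\be$ is a root, but $\ga(\al+\be)=\ga(\al)+\ga(\be)=1+1=2$, violating the cominuscule condition $|\ga(\cdot)|\le 1$. Next, in the case $(\al,\be)>0$, the difference $\al-\be$ is a non-zero root with $\ga(\al-\be)=0$. Every root with vanishing $\ga$-coefficient lies in the root subsystem generated by $\Delta\ssm\{\ga\}$, so when written in the simple root basis, $\al-\be$ either has all coefficients $\ge 0$ (in which case $\be\le\al$) or all coefficients $\le 0$ (in which case $\al\le\be$). Either conclusion contradicts the hypothesis that $\al$ and $\be$ are incomparable.

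The only remaining possibility is $(\al,\be)=0$, which is exactly what is to be shown. I expect no real obstacle here; the argument is two short cases and the only content is recognizing that the cominuscule bound $|\ga(\al')|\le 1$ blocks the negative-pairing case, while positivity of the $\ga$-coefficient on both $\al$ and $\be$ forces cancellation in the positive-pairing case. A brief justification of the root-system dichotomy (via the $\be$-string through $\al$, or equivalently via $s_\be(\al)$ being a root) is the one standard fact invoked, and I would simply cite it.
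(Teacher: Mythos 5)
Your proof is correct and is essentially the paper's argument run in the contrapositive direction: the paper notes that the cominuscule condition rules out $\al+\be\in R$ and incomparability rules out $\al-\be\in R$, then cites the same root-string dichotomy (Humphreys, Lemma~9.4) to conclude $(\al,\be)=0$. No substantive difference.
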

\begin{proof}
  The cominuscule condition implies that $\al + \be \notin R$, and the
  incomparability gives $\al - \be \notin R$.  The lemma therefore
  follows from \cite[Lemma~9.4]{humphreys:introduction}.
\end{proof}

\begin{defn}\label{def:shape2wge}
  Given a straight shape $\la \subset \La_X$, define an element $w_\la
  \in W$ as follows.  If $\la = \emptyset$, then set $w_\la = 1$.
  Otherwise set $w_\la = w_{\la\ssm\{\al\}}\, s_\al \in W$, where $\al
  \in \la$ is any maximal box.
\end{defn}

To see that $w_\la$ is well defined, assume that $\al$ and $\be$ are
two distinct maximal boxes of the straight shape $\la$.  Then
Lemma~\ref{lem:incomp_box} implies that $(\al,\be)=0$, so $s_\al$
commutes with $s_\be$.  By induction on $|\la|$ we may assume that
$w_{\la\ssm\{\al\}} = w_{\la\ssm\{\al,\be\}}\,s_\be$ and
$w_{\la\ssm\{\be\}} = w_{\la\ssm\{\al,\be\}}\,s_\al$ are well defined,
so we obtain $w_{\la\ssm\{\al\}}\, s_\al = w_{\la\ssm\{\al,\be\}}\,
s_\be\, s_\al = w_{\la\ssm\{\al,\be\}}\, s_\al\, s_\be =
w_{\la\ssm\{\be\}}\, s_\be$, as required.

For any element $w \in W$, let $I(w) = \{ \al \in R^+ \mid w.\al < 0
\}$ denote the inversion set of $w$.  This set uniquely determines $w$
\cite[Thm.~10.3]{humphreys:introduction}, and we have $\ell(w) =
|I(w)|$.  The following proposition shows that the map from $W^P$ to
straight shapes in $\La_X$ is given by $w \mapsto I(w)$, see also
\cite[Prop.~2.1]{thomas.yong:combinatorial}.

\begin{thm}\label{thm:shapes}
  If $\la \subset \La_X$ is a straight shape, then $w_\la \in W^P$ and
  $I(w_\la) = \la$.  On the other hand, if $u \in W^P$, then $I(u)$ is
  a straight shape in $\La_X$ and $w_{I(u)} = u$.
\end{thm}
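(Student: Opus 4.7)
The plan is to prove the two halves of the theorem simultaneously by induction, the forward direction on $|\la|$ and the backward direction on $\ell(u)$. The base cases ($\la = \emptyset$, $u = 1$) are immediate from $w_\emptyset = 1$, $I(1) = \emptyset$.

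For the forward direction with $|\la| \geq 1$, I would pick a maximal box $\al \in \la$ and set $\mu = \la \ssm \{\al\}$. Then $\mu$ is still a straight shape, so by induction $w_\mu \in W^P$ and $I(w_\mu) = \mu$. Since $w_\la = w_\mu s_\al$, for any positive root $\be$ we have $w_\la.\be = w_\mu(\be - (\be,\alv)\al)$, so I would verify $I(w_\la) = \mu \cup \{\al\}$ through a case analysis on $\be$. The case $\be = \al$ is immediate from $\al \notin \mu$. When $(\be,\alv) = 0$, $s_\al$ fixes $\be$ and the result reduces to $I(w_\mu) = \mu$; by Lemma~\ref{lem:incomp_box} this disposes of every $\be \in \La_X$ incomparable to $\al$. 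For the remaining cases, the cominuscule bound $|\ga(\cdot)| \leq 1$ tightly restricts the possible values of $(\be,\alv)$, and one identifies $\be - (\be,\alv)\al$ as a concrete root whose sign under $w_\mu$ can be read off from the inductive data and the maximality of $\al$ in $\la$. When the resulting root lies in the parabolic root subsystem $R_P = \{\de \in R : \ga(\de) = 0\}$, one uses the inductive hypothesis $w_\mu \in W^P$. Once $I(w_\la) = \la$ is established, $w_\la \in W^P$ follows: every simple $\de \neq \ga$ has $\ga(\de) = 0$, so $\de \notin \La_X \supset I(w_\la)$, hence $w_\la.\de > 0$.

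For the backward direction with $\ell(u) \geq 1$, I would first show $I(u) \subset \La_X$. Any $\be \in I(u)$ has $\ga(\be) \in \{0,1\}$ by the cominuscule condition and positivity; if $\ga(\be) = 0$ then $\be \in R_P$, and the standard characterization that $u \in W^P$ iff $u.\de > 0$ for every simple $\de \neq \ga$ extends by induction on $\height(\be)$ to give $u.\be > 0$, contradicting $\be \in I(u)$. Thus $\ga(\be) = 1$ and $\be \in \La_X$. Next, $I(u)$ is a lower order ideal of $\La_X$: for $\al \in I(u)$ and $\be \in \La_X$ with $\be < \al$, the difference $\al - \be$ is a positive root in $R_P$, so $u.(\al-\be) > 0$ by the preceding argument, forcing $u.\be < 0$ and hence $\be \in I(u)$. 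Finally, with $\al$ a maximal element of $I(u)$, standard inversion formulas give $I(us_\al) = I(u) \ssm \{\al\}$ and $\ell(us_\al) = \ell(u) - 1$, and $us_\al \in W^P$ (both checked using the just-established structural facts), so the induction hypothesis yields $us_\al = w_{I(u) \ssm \{\al\}}$ and hence $u = w_{I(u)}$ by Definition~\ref{def:shape2wge}.

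The main obstacle is the case analysis in the forward direction for roots $\be$ with $\be < \al$ or with $\ga(\be) = 0$, since these require combining the cominuscule restriction on $\ga$-coefficients with precise information about $I(w_\mu)$ and the structure of $\la$ near its maximal box. Lemma~\ref{lem:incomp_box} handles all incomparable cases, and the inductive hypothesis $w_\mu \in W^P$ controls the roots of $R_P$.
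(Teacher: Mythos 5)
Your forward direction is the same as the paper's: induction on $|\la|$, strip a maximal box $\al$, write $w_\la = w_{\la\ssm\{\al\}}s_\al$, and run a case analysis on $(\be,\alv)$ and $\ga(\be)$ constrained by the cominuscule condition, with Lemma~\ref{lem:incomp_box} disposing of incomparable boxes and $I(w_\la)=\la$ then forcing $w_\la\in W^P$. The first two steps of your backward direction ($I(u)\subset\La_X$ and $I(u)$ a lower order ideal) also match the paper, modulo one slip: $\al-\be$ is a sum of simple roots from $\Delta\ssm\{\ga\}$ but need not itself be a root, so you cannot say it is ``a positive root in $R_P$''; the correct (and sufficient) statement is that $u.(\al-\be)$ is a non-negative combination of positive roots, whence $u.\be\leq u.\al<0$.

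The genuine gap is in your final step. The formula $I(us_\al)=I(u)\ssm\{\al\}$ is not a ``standard inversion formula'' when $\al$ is a non-simple root: for a general reflection one only has $I(us_\al)=I(s_\al)\,\triangle\,s_\al.I(u)$, and extracting your claim from this (together with $\ell(us_\al)=\ell(u)-1$ and $us_\al\in W^P$) would require essentially repeating the cominuscule case analysis of the forward direction. Moreover the whole second induction is unnecessary: once you know $I(u)$ is a straight shape, the forward half you have already proved gives $I(w_{I(u)})=I(u)$, and since an element of $W$ is uniquely determined by its inversion set, $u=w_{I(u)}$ follows immediately. This one-line conclusion is exactly how the paper finishes, and replacing your last step with it closes the gap.
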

\begin{proof}
  The first claim is clear if $\la = \emptyset$, so assume that
  $\la\neq\emptyset$ and let $\al\in\la$ be a maximal box.  Then
  $w_\la = w_{\la\ssm\{\al\}}\, s_\al$, and by induction on $|\la|$ we
  have $I(w_{\la\ssm\{\al\}}) = \la\ssm\{\al\}$.  Let $\be \in R^+$ be
  any positive root.  We claim that $w_\la.\be < 0$ if and only if
  $\be \in \la$.  We have $s_\al.\be = \be - (\be,\alv) \al$, so the
  cominuscule condition implies that $\ga(\be) - (\be,\alv) =
  \ga(s_\al.\be) \in \{0,\pm 1\}$.  The claim is clear if
  $(\be,\alv)=0$, so assume that $(\be,\alv)\neq 0$.  If $\ga(\be)=0$,
  then we must have $(\be,\alv) = \pm 1$, and the claim is true
  because we have either $s_\al.\be = \be+\al > \al$ or $0 <
  -s_\al.\be = \al-\be < \al$.  Otherwise we have $\ga(\be)=1$, and
  the cominuscule condition implies that $(\be,\alv) \in \{1,2\}$.  In
  this case \cite[Lemma~9.4]{humphreys:introduction} implies that $\be
  \leq \al$ or $\be > \al$.  If $\be > \al$, then the claim is true
  because we have either $s_\al.\be = \be-\al > 0$ and
  $\ga(s_\al.\be)=0$, or $0 < -s_\al.\be = 2\al-\be < \al$.  Finally,
  if $\be \leq \al$ then the claim follows because we have either
  $-s_\al.\be = \al-\be > 0$ and $\ga(s_\al.\be)=0$, or $-s_\al.\be =
  2\al - \be \geq \al$.  It follows from the claim that $I(w_\la) =
  \la$, which in turn implies that $w_\la \in W^P$.

  Let $u \in W^P$.  Since $u.(\Delta \ssm \{\gamma\}) \subset R^+$ we
  get $u.(R^+ \ssm \Lambda_X) \subset R^+$.  It follows that $I(u)
  \subset \Lambda_X$.  To see that $I(u)$ is a straight shape in
  $\Lambda_X$, let $\al \in I(u)$, $\be \in \Lambda_X$, and assume
  that $\be \leq \al$.  Then $\al - \be$ is a sum of simple roots from
  the set $\Delta \ssm \{\gamma\}$, so $u.(\al - \be) \geq 0$, hence
  $u.\be \leq u.\al < 0$ and $\be \in I(u)$.  Finally, the identity
  $w_{I(u)} = u$ follows because $I(w_{I(u)}) = I(u)$.
\end{proof}

\begin{remark}\label{rmk:coroot_poset}
  If the simple root $\ga$ is minuscule but not cominuscule, then the
  Schubert varieties in $X$ are given by straight shapes in the
  partially ordered set $\La_X := \{ \alv \in R^\vee \mid \gav(\alv)=1
  \}$, consisting of positive coroots for which the coefficient of the
  cominuscule coroot $\gav$ is non-zero.  Alternatively, one can
  construct $X$ using a group $G$ of a different Lie type, so that
  $\ga$ is both minuscule and cominuscule.
\end{remark}

The Bruhat order on $W^P$ is defined by $u \leq v$ if and only if
$X(u) \subset X(v)$.

\begin{cor}\label{cor:bruhat}
  Let $\la$ and $\mu$ be straight shapes in $\La_X$.  Then $w_\mu \leq
  w_\la$ if and only if $\mu \subset \la$.
\end{cor}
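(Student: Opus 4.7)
My plan is to prove both implications by induction on $|\la|$, using the bijection $\la \mapsto w_\la$ from Theorem~\ref{thm:shapes} (with inverse $u \mapsto I(u)$).

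For the backward direction, assume $\mu \subset \la$ and enumerate $\la \ssm \mu = \{\al_1,\ldots,\al_k\}$ according to a linear extension of the induced partial order, from minimum to maximum. Setting $\mu_0 = \mu$ and $\mu_i = \mu_{i-1} \cup \{\al_i\}$ yields a telescoping chain $\mu = \mu_0 \subsetneq \mu_1 \subsetneq \cdots \subsetneq \mu_k = \la$ of straight shapes in which $\al_i$ is maximal in $\mu_i$ (everything below $\al_i$ in $\La_X$ lies either in $\mu$ or among $\al_1,\ldots,\al_{i-1}$). Definition~\ref{def:shape2wge} gives $w_{\mu_i} = w_{\mu_{i-1}}\, s_{\al_i}$, and Theorem~\ref{thm:shapes} gives $\ell(w_{\mu_i}) = |\mu_i| = \ell(w_{\mu_{i-1}}) + 1$, so each step is a Bruhat cover; composing the chain yields $w_\mu \leq w_\la$.

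For the forward direction, I would invoke the standard fact that Bruhat order on $W^P$ is graded by length, so $w_\mu \leq w_\la$ refines to a saturated chain of Bruhat covers inside $W^P$. It therefore suffices to show that a single cover $w_\nu \lessdot w_\pi$ in $W^P$ corresponds to $\pi = \nu \cup \{\delta\}$ with $\delta$ a maximal box of $\pi$. Writing $w_\pi = w_\nu\, s_\delta$ with $\ell(w_\pi) = \ell(w_\nu) + 1$ and $w_\nu(\delta) > 0$, the fact that $w_\nu$ and $w_\pi$ represent distinct cosets in $W/W_P$ forces $s_\delta \notin W_P$, hence $\ga(\delta) \neq 0$; the cominuscule condition then gives $\delta \in \La_X$.

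The heart of the argument is to prove $I(w_\nu\, s_\delta) = I(w_\nu) \cup \{\delta\}$, carried out in the style of the proof of Theorem~\ref{thm:shapes}. For each positive root $\al \neq \delta$, set $c = (\al,\delta^\vee)$, observe $\ga(s_\delta\al) = \ga(\al) - c \in \{0,\pm 1\}$, and do case analysis on $\ga(\al) \in \{0,1\}$ and on the sign of $c$ to verify $\al \in I(w_\nu\, s_\delta)$ iff $\al \in I(w_\nu)$. The main obstacle is the subcase where $\al \in \La_X$ and $c = 1$, so that $s_\delta\al = \al - \delta \in \pm(R^+ \ssm \La_X)$; here one uses that $w_\nu \in W^P$ maps $R^+ \ssm \La_X$ into $R^+$, combined with the fact that $\nu$ is a lower order ideal not containing $\delta$ (so contains no box strictly above $\delta$), to close the equivalence. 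Lemma~\ref{lem:incomp_box} handles incomparable $\al$ and $\delta$ by giving $c = 0$ directly. Once this analysis is complete, maximality of $\delta$ in $\pi = \nu \cup \{\delta\}$ is automatic because $\pi$ is an ideal and $\delta \notin \nu$.
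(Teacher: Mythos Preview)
Your approach is essentially the paper's: both reduce to a single Bruhat step with $\ell(w_\la)=\ell(w_\mu)+1$ (the paper's terse ``we may assume'' is exactly your chain refinement), and both then analyse $w_\la = w_\mu\, s_\delta$ for a positive root $\delta$.  Your argument that $\delta\in\La_X$ via cosets is correct; the paper gets the same conclusion more quickly by citing \cite[Prop.~5.7]{humphreys:reflection} to place $\delta$ directly in $I(w_\la)\smallsetminus I(w_\mu)=\la\smallsetminus\mu$.

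There is one small gap in your forward direction.  You set out to prove the two-sided equivalence ``$\al\in I(w_\nu s_\delta)$ iff $\al\in I(w_\nu)$'' for every $\al\neq\delta$, but in the subcase $\al\in\La_X$, $c=(\al,\delta^\vee)=1$, $\al<\delta$, your stated tools only yield $\al\in I(w_\pi)$: since $\delta-\al\in R^+\smallsetminus\La_X$ you get $w_\nu(\delta-\al)>0$ and hence $w_\pi.\al<0$.  They do \emph{not} show $\al\in\nu$, and the fact that $\nu$ contains no box above $\delta$ is of no help when $\al<\delta$.  The remedy is to drop the iff and prove only the inclusion $I(w_\nu)\cup\{\delta\}\subset I(w_\pi)$: for $\al\in\nu$ one has $\al<\delta$ (comparability comes from Lemma~\ref{lem:incomp_box} and $\delta\notin\nu$), and then the same computation gives $\al\in\pi$.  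Equality then follows from $|\pi|=|\nu|+1$.  This is precisely what the paper does: it checks only that each $\be\in\mu$ satisfies $w_\la.\be<0$, a one-line case split on whether $(\be,\al^\vee)=0$.
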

\begin{proof}
  We may assume that $|\la| = |\mu|+1$, in which case $w_\mu \leq
  w_\la$ if and only if $w_\mu^{-1} w_\la$ is a reflection \cite[\S
  5.9]{humphreys:reflection}.  If $\mu \subset \la$, then this is true
  because $w_\la = w_\mu\, s_\al$ where $\{\al\} = \la \ssm \mu$.
  Assume that $w_\mu \leq w_\la$ and choose $\al \in R^+$ such that
  $w_\la = w_\mu\, s_\al$.  It follows from
  \cite[Prop.~5.7]{humphreys:reflection} that $\al \in \la\ssm\mu$.
  We must show that $w_\la.\be < 0$ for each $\be \in \mu$.  This is
  clear if $(\be,\alv)=0$, so assume that $(\be,\alv)\neq 0$.  Since
  $\gamma(s_\al.\be) = 1 - (\be,\alv) \in \{0,\pm 1\}$, we have
  $(\be,\alv) > 0$, and we deduce from
  \cite[Lemma~9.4]{humphreys:introduction} that $\be < \al$.  This
  implies that $\be \in \la$, as required.
\end{proof}

Let $w_X$ be the longest element of $W_P$.  Then $w_X.\La_X = \La_X$.
In fact, since $w_X.(\Delta \ssm \{\ga\}) \subset -R^+$, it follows
that $w_X$ is an order-reversing involution of $\La_X$.  As mentioned
above, $w_X$ acts as a rotation on the shapes in the left column of
Table~\ref{tab:shapes} and as a reflection on the shapes in the right
column.  If $\la \subset \La_X$ is a straight shape, then $w_X.\la$ is
an upper order ideal of $\La_X$.  The {\em Poincare dual shape\/} of
$\la$ is the straight shape $\la^\vee = \La_X \ssm w_X.\la$.

\begin{example}
  Let $X = E_6/P_6$ be the Cayley plane.  Then the straight shape $\la
  = (4,2,1)$ has Poincare dual shape $\la^\vee = (4,3,2)$.
  \[
  \la = \tableau{10}{[A]{}&{}&{}&{}\\ &&{}&{}&[lt]{}&[tr]{}\\ 
  &&[A]{}&&&[r]\\ &&&[t]&[lb]&[b]&[h]&[hr]}
  \hspace{20mm}
  \la^\vee = \tableau{10}{[A]{}&{}&{}&{}\\ &&{}&{}&{}&[tr]\\ 
  &&[A]{}&{}&&[r]\\ &&&&[lb]&[b]&[h]&[hr]}
  \]
\end{example}

\begin{cor}\label{cor:poincare}
  Let $\la \subset \La_X$ be a straight shape.  Then $w_{\la^\vee} =
  w_0 w_\la w_X$ is the Poincare dual Weyl group element of $w_\la$,
  where $w_0$ is the longest element in $W$.
\end{cor}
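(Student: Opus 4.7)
The plan is to identify the Weyl group element $w_0 w_\la w_X$ by computing its inversion set and showing that this set equals the straight shape $\la^\vee = \La_X \ssm w_X.\la$. By Theorem~\ref{thm:shapes}, any element of $W^P$ is determined by its inversion set, and any element of $W$ whose inversion set is contained in $\La_X$ must lie in $W^P$ (since then no simple root of $W_P$ is inverted). Thus, once we show $I(w_0 w_\la w_X) = \la^\vee$, the identity $w_0 w_\la w_X = w_{\la^\vee}$ follows immediately.

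The computation of the inversion set reduces to a case analysis. For any $\al \in R^+$, the fact that $w_0$ reverses positivity gives
\[
\al \in I(w_0 w_\la w_X) \quad \Longleftrightarrow \quad w_\la w_X . \al > 0.
\]
The key structural input is the behavior of $w_X$ on $R^+$: since $w_X$ is the longest element of $W_P$, it sends every positive root with $\ga(\al)=0$ to a negative root, while on $\La_X = \{\al\in R^+ : \ga(\al)=1\}$ it restricts to an order-reversing involution (already noted in the paragraph preceding the corollary). Combining this with $I(w_\la) = \la$ from Theorem~\ref{thm:shapes}, I would split the analysis in two cases.

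First, if $\ga(\al)=0$, then $-w_X.\al \in R^+\ssm\La_X$, so $-w_X.\al \notin I(w_\la)=\la$, giving $w_\la w_X . \al < 0$. Hence such $\al$ lies neither in $I(w_0 w_\la w_X)$ nor in $\la^\vee \subset \La_X$. Second, if $\al \in \La_X$, set $\be = w_X.\al \in \La_X$; then
\[
\al \in I(w_0 w_\la w_X) \ \Longleftrightarrow\ w_\la.\be > 0 \ \Longleftrightarrow\ \be \in \La_X\ssm\la \ \Longleftrightarrow\ \al \in w_X.(\La_X\ssm\la) = \La_X \ssm w_X.\la = \la^\vee.
\]
This shows $I(w_0 w_\la w_X) = \la^\vee \subset \La_X$, and the corollary follows.

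The only nontrivial step is the second case, and even there the argument is direct once one invokes the two facts that $w_X$ preserves $\La_X$ as an involution and that $w_\la$ inverts exactly the boxes of $\la$; both are already in hand. So I do not anticipate a genuine obstacle here—the proof is essentially bookkeeping built on Theorem~\ref{thm:shapes} and the elementary observation about how $w_0$ and $w_X$ act on positive roots.
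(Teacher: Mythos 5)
Your proposal is correct and follows essentially the same route as the paper: both identify $w_0 w_\la w_X$ by computing its inversion set, using that $w_X$ kills positive roots with $\ga$-coefficient $0$ and acts as an involution on $\La_X$, together with $I(w_\la)=\la$ from Theorem~\ref{thm:shapes}, to conclude $I(w_0 w_\la w_X)=\la^\vee$. Your version merely makes the two cases explicit where the paper compresses them into one sentence.
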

\begin{proof}
  If $\be \in R^+$ is any positive root, then we have $w_0 w_\la
  w_X.\be < 0$ if and only if $w_X.\be \in R^+ \ssm \la$, and since
  $w_X.(\Delta \ssm \{\ga\}) \subset -R^+$, this holds if and only if
  $w_X.\be \in \La_X\ssm\la = w_X.\la^\vee$.  It follows that $I(w_0
  w_\la w_X) = \la^\vee$, as required.
\end{proof}

To each straight shape $\la \subset \La_X$ we assign the Schubert
variety $X_\la := X(w_{\la^\vee})$ of codimension $|\la|$ in $X$.  If
$\mu$ is an additional straight shape, then Corollary~\ref{cor:bruhat}
implies that $X_\la \subset X_\mu$ if and only if $\mu \subset \la$.
For the classical Grassmannians $\Gr(m,n)$, $\LG(n,2n)$, and
$\OG(n,2n)$ that parametrize subspaces of a vector space, one can also
define the Schubert variety corresponding to a partition $\la$ by
using incidence conditions relative to a fixed flag of subspaces (see
e.g.\ \cite{buch.ravikumar:pieri} for these standard constructions).
Since the Bruhat order is also determined by containment of partitions
in these constructions, one may deduce from the following lemma that
the standard constructions agree with the assignment $\la \mapsto
X_\la$ used here.  In particular, the $K$-theoretic Pieri formulas
proved in \cite{lenart:combinatorial, buch.ravikumar:pieri} are valid
with the notation used here.

\begin{lemma}\label{lem:bruhat_auto}
  Any automorphism of the set $W^P$ that preserves the Bruhat order
  arises from an automorphism of the Dynkin diagram of $\Delta$ that
  fixes $\ga$.
\end{lemma}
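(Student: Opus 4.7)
The plan is to reduce the lemma to a purely combinatorial statement about $\La_X$ and then verify that statement by inspection of the shapes listed in Table~\ref{tab:shapes}. By Proctor's theorem $(W^P, \le)$ is a distributive lattice, and Theorem~\ref{thm:shapes} identifies it with the lattice of lower order ideals in $\La_X$. Birkhoff's representation theorem therefore identifies $\La_X$ with the poset of join-irreducibles of $W^P$, so any order-preserving automorphism $\phi$ of $W^P$ is automatically a lattice automorphism and is determined by its restriction $\psi \in \mathrm{Aut}(\La_X, \le)$; the restriction map is a bijection between the two automorphism groups.

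On the other hand, any automorphism $\sigma$ of the Dynkin diagram that fixes $\gamma$ extends to an automorphism of $(R, \Delta)$ that stabilizes $W_P$ (since it permutes $\Delta \ssm \{\gamma\}$) and preserves length and Bruhat order on $W$; it therefore descends to an element of $\mathrm{Aut}(W^P, \le)$, whose action on join-irreducibles $\La_X$ agrees with the restriction of $\sigma$ to the embedded subset $\La_X \subset R^+$. The lemma thus reduces to showing that every $\psi \in \mathrm{Aut}(\La_X, \le)$ arises from a Dynkin automorphism fixing $\gamma$. Any such $\psi$ must fix the unique minimal box $\gamma \in \La_X$ and preserve the height function $\height$, and each cover relation $\alpha \prec \alpha'$ in $\La_X$ corresponds to a simple root $\alpha' - \alpha \in \Delta \ssm \{\gamma\}$, so $\psi$ induces a permutation of the colors of cover relations.

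To finish, I would inspect each shape in Table~\ref{tab:shapes}. One verifies directly that $\mathrm{Aut}(\La_X, \le)$ is trivial except in the following two cases: the square $\La_{\Gr(m, 2m)}$ admits a transposition matching the order-two diagram automorphism of $A_{2m-1}$ fixing $\alpha_m$; and the two-row shape $\La_{Q^{2n-2}}$ of the even quadric admits a reflection swapping its two horizontal tails, matching the $D_n$ diagram automorphism exchanging $\alpha_{n-1}$ and $\alpha_n$. In the remaining cases, one checks in turn that the Lagrangian Grassmannian, the shifted staircase $\La_{\OG(n,2n)}$, odd quadrics, projective spaces, non-square Grassmannians, the Cayley plane, and the Freudenthal variety admit only the trivial poset automorphism, in agreement with the triviality of the corresponding stabilizer in the Dynkin automorphism group (for $E_6$, the nontrivial diagram automorphism swaps $\alpha_1 \leftrightarrow \alpha_6$ and so does not fix $\gamma$).

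The main obstacle is this case-by-case combinatorial verification. The most delicate point is the even quadric, where one must confirm that the geometric reflection of the two-row shape coincides with the restriction to $\La_X$ of the root-system action of the Dynkin automorphism of $D_n$; this amounts to identifying the boxes in each row with explicit roots and checking that the diagram involution sends the $k$-th box of one row to the $k$-th box of the other. For the remaining shapes the verification is routine once the height-preserving constraint is imposed.
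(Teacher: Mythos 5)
Your reduction is the same as the paper's: an order automorphism of the distributive lattice $W^P$ is determined by the induced automorphism of its poset of join-irreducibles, which Theorem~\ref{thm:shapes} and Corollary~\ref{cor:bruhat} identify with $\La_X$, so everything comes down to computing $\mathrm{Aut}(\La_X,\leq)$. The divergence is in how that computation is carried out. The paper avoids your shape-by-shape inspection by introducing \emph{extreme boxes} (boxes lying on a unique maximal chain of $\La_X$): any automorphism $\psi$ sends an extreme box to an extreme box of the same height; in the cases where two such boxes exist, the asserted diagram automorphism $\iota$ is available so that after composing with it $\psi$ fixes an extreme box; then $\psi$ fixes the unique maximal chain through that box, whose complement is again a cominuscule poset, and induction finishes. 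Only the two exceptional shapes are handled by direct verification. Your proposal instead asserts the classification of $\mathrm{Aut}(\La_X,\leq)$ for each family and explicitly defers the check; for the rectangles, staircases and shifted staircases that check (height preservation plus a count of covers, or an induction like the paper's) is the actual content of the lemma, so as written you have a correct plan rather than a proof.

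Moreover, one of your asserted cases is false, and this is worth flagging because it exposes a blind spot in the lemma itself. The three-row shifted staircase (rows $3,2,1$) is not rigid: its two boxes of height $3$ are incomparable, cover the same box, and are covered by the same box, so transposing them is a poset automorphism. As $\La_{\OG(4,8)}$ this is harmless, since the triality automorphism of $D_4$ fixes $\al_4$ and swaps $\al_1,\al_3$, inducing exactly this transposition. But the same poset is $\La_{\LG(3,6)}$ (the roots $\al_1+\al_2+\al_3$ and $2\al_2+\al_3$ of $C_3$ are incomparable of height $3$), and $C_3$ has no nontrivial diagram automorphism; so the Bruhat order on $W^P$ for $\LG(3,6)$ admits an automorphism not arising from the Dynkin diagram, and the lemma as stated fails there. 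The paper's own proof shares this gap: its claim that two distinct extreme boxes of equal height occur only for $\Gr(n,2n)$ and $Q^{2n}$ is contradicted by $\LG(3,6)$. This does not affect the rest of the paper, whose main results concern minuscule $X$, but it does mean your ``routine'' verification for the Lagrangian Grassmannian would not go through as claimed.
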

\begin{proof}
  An order preserving automorphism of $W^P$ is equivalent to an
  inclusion preserving automorphism of the set of straight shapes in
  $\La_X$.  Since such an automorphism must restrict to an
  automorphism of the straight shapes that contain a unique maximal
  box, it must be given by an automorphism of the partially ordered
  set $\La_X$.  If $X$ is the Cayley plane or the Freudenthal variety,
  then we leave it as an exercise to check that the only automorphism
  of $\La_X$ is the identity.  
  
  We will say that a box $\al \in \La_X$ is {\em extreme\/} if $\al$
  belongs to a unique maximal totally ordered subset of $\La_X$.  If
  $X$ is not the Cayley plane or the Freudenthal variety, then an
  inspection of Table~\ref{tab:shapes} shows that $\La_X$ contains at
  least one extreme box.  For example, if $X = \Gr(m,n)$ is a
  Grassmannian of type A, then the upper-right box and the lower-left
  box of $\La_X$ are extreme boxes.  Furthermore, $\La_X$ contains two
  distinct extreme boxes of the same height if and only if $X =
  \Gr(n,2n)$ or $X = Q^{2n}$ for some $n \geq 2$.  These are also the
  cases where the Dynkin diagram has a nontrivial automorphism that
  fixes $\ga$, and this automorphism defines an automorphism $\iota$
  of $\La_X$ that interchanges the two extreme boxes.
  
  Let $\psi$ be any automorphism of $\La_X$.  Then $\psi$ maps each
  extreme box $\al$ to an extreme box $\al'$ of the same height.  If
  $\al' \neq \al$, then we can replace $\psi$ with $\psi\iota$ to
  obtain that $\psi$ maps every extreme box to itself.  It is enough
  to show that $\psi$ is the identity.  Let $\al \in \La_X$ be an
  extreme box and let $P \subset \La_X$ be the unique maximal totally
  ordered subset containing $\al$.  Then $\psi$ is the identity on
  $P$, so $\psi$ restricts to an automorphism of $\La_X \ssm P$.  This
  partially ordered set is isomorphic to $\La_{X'}$ for a cominuscule
  variety $X'$ of smaller dimension.  Since $\psi$ fixes at least one
  extreme box of $\La_X \ssm P$, it follows by induction that $\psi$
  is the identity on $\La_X \ssm P$, as required.
\end{proof}

The methods of this section can be used to prove that every element
$w \in W^P$ is {\em fully commutative}, i.e.\ any reduced expression
for $w$ can be obtained from any other by interchanging commuting
simple reflections.  This was proved by Fan for simply laced root
systems \cite{fan:hecke} and by Stembridge for non-simply laced root
systems \cite{stembridge:fully}.  The remainder of this section will
not be used in the rest of our paper.

Let $\mu \subset \la$ be straight shapes in $\La_X$.  Then
Definition~\ref{def:shape2wge} implies that $w_\mu^{-1} w_\la$ is the
product of all reflections $s_\al$ for $\al \in \la/\mu$, in any order
compatible with the partial order $\leq$ on $\La_X$.  However, the
length of this product is hard to predict.

On the other hand, it follows from Theorem~\ref{thm:shapes} that
$I(w_\la w_\mu^{-1}) = w_\mu.(\la/\mu)$, so $\ell(w_\la w_\mu^{-1}) =
|\la/\mu|$.  Furthermore, the product $w_\la w_\mu^{-1}$ depends only
on the skew shape $\la/\mu$.  Indeed, if $\al \in \La_X\ssm\la$ is any
box such that $\mu\cup\{\al\}$ is a straight shape, then
$w_{\la\cup\{\al\}}\, {w_{\mu\cup\{\al\}}}^{-1} = w_\la s_\al (w_\mu
s_\al)^{-1} = w_\la w_\mu^{-1}$.  The elements $w_{\la/\mu} := w_\la
w_\mu^{-1}$ provide cominuscule analogues of the 321-avoiding
permutations in type A \cite{billey.jockusch.ea:some}.  Notice that
$w_{\{\al\}}$ is a simple reflection for each $\al \in \La_X$.  In
fact, if we label each box $\al \in \La_X$ with the corresponding
simple reflection $w_{\{\al\}}$, then we recover the heap of the
longest element in $W^P$ used by Stembridge in
\cite{stembridge:fully}.  Notice also that if $\mu \subset \la \subset
\nu \subset \La_X$ are straight shapes, then $w_{\nu/\mu} =
w_{\nu/\la} w_{\la/\mu}$.

\begin{cor}
  Let $\la \subset \La_X$ be a straight shape.  Then the reduced
  expressions for $w_\la$ are exactly the expressions of the form
  $w_\la = w_{\al_{|\la|}}\cdots w_{\al_2} w_{\al_1}$, where $\al_1,
  \al_2, \dots, \al_{|\la|}$ is any ordering of the boxes of $\la$
  compatible with the partial order $\leq$ on $\La_X$.  Furthermore,
  $w_\la$ is fully commutative.
\end{cor}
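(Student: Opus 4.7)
My plan is to use the key identity $w_\la = t_\al \cdot w_{\la \ssm \{\al\}}$ for any maximal box $\al \in \la$, where $t_\al := w_{\{\al\}}$ is the simple reflection labeling $\al$ in the heap (as noted in the paragraph preceding the corollary). This identity follows from $w_{\mu \cup \{\al\}} = w_\mu \cdot s_\al$ in Definition~\ref{def:shape2wge}, combined with the established fact that $w_{\la/\mu} = w_\la w_\mu^{-1}$ depends only on $\la \ssm \mu$.

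First I would show that any linear extension $\al_1, \ldots, \al_{|\la|}$ of $(\la, \leq)$ yields a reduced expression of the stated form. Iterating the key identity along the chain $\emptyset \subset \{\al_1\} \subset \cdots \subset \la$ of straight shapes gives $w_\la = t_{\al_{|\la|}} \cdots t_{\al_1}$, a word of length $|\la|$ in simple reflections; since $\ell(w_\la) = |I(w_\la)| = |\la|$ by Theorem~\ref{thm:shapes}, this expression is reduced.

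For the converse I would characterize the left descents of $w_\la$ as $\{t_\al : \al \text{ is a maximal box of } \la\}$, then induct on $|\la|$. The previous step shows each such $t_\al$ is a left descent. For the reverse inclusion, a simple reflection $s_\de$ with $\de \in \Delta$ is a left descent of $w_\la$ iff $\de \in I(w_\la^{-1}) = -w_\la.\la$, so $\de = -w_\la.\be$ for some $\be \in \la$. If $\be$ were not maximal, pick $\be' \in \la$ with $\be' > \be$; then $\be' - \be$ is a non-negative integer combination of simple roots from $\Delta \ssm \{\ga\}$ (since $\ga(\be' - \be) = 0$), so $w_\la.(\be' - \be) \geq 0$ because $w_\la \in W^P$, which gives $w_\la.\be' \geq -\de$. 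Since $w_\la.\be' \in R^-$ has only non-positive simple-root coefficients and $\de$ is simple, this inequality forces $w_\la.\be' = -\de$, whence $\be' = \be$, a contradiction. Given a reduced expression $s_{i_1} \cdots s_{i_{|\la|}}$ of $w_\la$, this identifies $s_{i_1} = t_\al$ for some maximal $\al \in \la$, and $s_{i_2} \cdots s_{i_{|\la|}}$ is then a reduced expression for $t_\al w_\la = w_{\la \ssm \{\al\}}$; induction completes the bijection with linear extensions.

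Full commutativity then follows because any two linear extensions of a finite poset are connected by transpositions of consecutive incomparable elements. For such a swap at positions $i, i+1$, setting $\mu = \{\al_1, \ldots, \al_{i-1}\}$ gives $t_{\al_i} = s_{w_\mu.\al_i}$ and $t_{\al_{i+1}} = s_{w_\mu.\al_{i+1}}$; Lemma~\ref{lem:incomp_box} together with $W$-invariance of the inner product yields $(w_\mu.\al_i, w_\mu.\al_{i+1}) = (\al_i, \al_{i+1}) = 0$, so the two simple reflections commute. Hence every reduced expression for $w_\la$ can be reached from any other by commutations alone. The main obstacle is the characterization of left descents in the second step, specifically the case analysis of $-w_\la.\be$ for $\be$ not maximal.
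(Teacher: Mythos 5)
Your proof is correct, and its skeleton matches the paper's: classify the reduced expressions by induction on left descents, then obtain full commutativity from the connectivity of linear extensions combined with the commutativity of the simple reflections attached to incomparable boxes. The two key sub-steps are proved by different means, though. For the descent step the paper notes that $s_\de w_\la$ lies again in $W^P$, hence equals $w_\mu$ for a straight shape $\mu$ with $\la/\mu$ a single (necessarily maximal) box; your direct root-system computation showing that $-w_\la.\be$ can be simple only when $\be$ is maximal is a self-contained substitute. One point to make explicit there: after showing that every left descent has the form $s_{-w_\la.\be}$ with $\be$ maximal, you still need $s_{-w_\la.\be} = w_{\{\be\}}$ in order to conclude that $s_{i_1} = t_\al$; this follows in one line from $w_\la = w_{\la\ssm\{\be\}}\, s_\be$, which gives $-w_\la.\be = w_{\la\ssm\{\be\}}.\be$ and $w_{\{\be\}} = w_{\la\ssm\{\be\}}\, s_\be\, w_{\la\ssm\{\be\}}^{-1} = s_{w_{\la\ssm\{\be\}}.\be}$ (alternatively, a cardinality comparison between descents and maximal boxes suffices). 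For the commutation step the paper argues purely formally, writing $w_{\{\al\}} w_{\{\al'\}} = w_{\nu\cup\{\al,\al'\}/\nu} = w_{\{\al'\}} w_{\{\al\}}$ and using only that $w_{\la/\mu}$ depends on the skew shape alone; your route via $t_{\al_i} = s_{w_\mu.\al_i}$ together with Lemma~\ref{lem:incomp_box} is equally valid and has the merit of exposing the orthogonality of incomparable roots that the paper's manipulation keeps hidden.
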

\begin{proof}
  Let $\be \in \Delta$ be any simple root such that $\ell(s_\be w_\la)
  = |\la|-1$.  Then $s_\be w_\la \in W^P$, so we have $s_\be w_\la =
  w_\mu$ for some straight shape $\mu \subset \la$, such that $\la/\mu
  = \{\al\}$ is a single box.  Since $\al$ is a maximal box of $\la$
  and $w_{\{\al\}} = w_\la w_\mu^{-1} = s_\be$, it follows by
  induction on $|\la|$ that every reduced expression for $w_\la$ has
  the indicated form.  It is therefore enough to show that, if $\al,
  \al' \in \La_X$ are incomparable boxes, then the simple reflections
  $w_{\{\al\}}$ and $w_{\{\al'\}}$ commute.  To see this, let $\nu
  \subset \La_X$ be any straight shape such that $\nu\cup\{\al\}$ and
  $\nu\cup\{\al'\}$ are strictly larger straight shapes.  Then we have
  $ w_{\{\al\}} w_{\{\al'\}} = w_{\nu\cup\{\al,\al'\}/\nu\cup\{\al'\}}
  \, w_{\nu\cup\{\al'\}/\nu} = w_{\nu\cup\{\al,\al'\}/\nu} =
  w_{\nu\cup\{\al,\al'\}/\nu\cup\{\al\}} \, w_{\nu\cup\{\al\}/\nu} =
  w_{\{\al'\}} w_{\{\al\}}$, as required.
\end{proof}


\section{Increasing tableaux and jeu de taquin}\label{sec:kjdt}

\subsection{Increasing tableaux}

In this section we let $\La$ denote a partially ordered set.  The
elements of $\La$ will be called {\em boxes}.  We write $\al
\triangleleft \be$ if the box $\be$ covers $\al$, i.e.\ we have $\al <
\be$ and no box $\be' \in \La$ satisfies $\al < \be' < \be$.  We will
assume that $\La$ contains finitely many minimal boxes and each box
$\al$ has finitely many covers.  A finite lower order ideal $\la
\subset \La$ is called a {\em straight shape\/} in $\La$, and a
difference $\la/\mu := \la \ssm \mu$ of straight shapes is called a
{\em skew shape}.

\begin{defn}
  Let $\nu/\la \subset \Lambda$ be a skew shape and $S$ a set.  A {\em
    tableau\/} of shape $\nu/\la$ with values in $S$ is a map $T :
  \nu/\la \to S$.  An {\em increasing tableau\/} of shape $\nu/\la$ is
  a map $T : \nu/\la \to \Z$ such that $T(\al) < T(\be)$ for all boxes
  $\al,\be \in \nu/\la$ with $\al < \be$.
\end{defn}

We will identify the tableau $T : \nu/\la \to S$ with the filling of
the boxes of $\nu/\la$ with the values specified by $T$.  If $\La =
\La_X$ is the partially ordered set associated to a minuscule variety
$X$ as in Table~\ref{tab:shapes}, then a tableau $T : \nu/\la \to \Z$
is increasing exactly when the rows of $T$ are strictly increasing
from left to right and the columns of $T$ are strictly increasing from
top to bottom.  The shape of $T$ is denoted $\sh(T) = \nu/\la$.  If
$S' \subset S$ is a subset, then we let $T|_{S'}$ denote the
restriction of $T$ to the subset $T^{-1}(S') \subset \sh(T)$.  If
$T^{-1}(S')$ is itself a skew shape in $\La$, then $T|_{S'}$ is the
tableau obtained from $T$ by removing all boxes with values in $S \ssm
S'$.  Given straight shapes $\la \subset \mu \subset \nu$ and tableaux
$T_1 : \mu/\la \to S$ and $T_2 : \nu/\mu \to S$, we let $T_1 \cup T_2
: \nu/\la \to S$ denote their union, defined by $(T_1\cup T_2)(\al) =
T_1(\al)$ for $\al \in \mu/\la$ and $(T_1\cup T_2)(\al) = T_2(\al)$
for $\al \in \nu/\mu$.

\subsection{$K$-theoretic jeu de taquin}

We next describe the $K$-theoretic jeu de taquin algorithm of Thomas
and Yong \cite{thomas.yong:jeu}.  We will say that two boxes $\al, \be
\in \Lambda$ are {\em neighbors\/} if $\al \triangleleft \be$ or $\be
\triangleleft \al$.  Given a tableau $T$ with values in $S$ and two
elements $s,s' \in S$, define a new tableau $\swap_{s,s'}(T)$ of the
same shape by
\[
\swap_{s,s'}(T) : \al \mapsto \begin{cases}
  s' & \text{if $T(\al)=s$ and $T(\be)=s'$ for some neighbor $\be$ of $\al$;}\\
  s & \text{if $T(\al)=s'$ and $T(\be)=s$ for some neighbor $\be$ of $\al$;}\\
  T(\al) & \text{otherwise.}
\end{cases}
\]
Let $T$ be an increasing tableau of shape $\nu/\la$ with values in the
interval $[a,b] \subset \Z$ and let $C \subset \la$ be a subset of the
maximal boxes in $\la$.  Then the {\em forward slide\/} of $T$
starting from $C$ is defined by
\[
\jdt_C(T) = (\swap_{b,\bullet} \swap_{b-1,\bullet} \cdots \,
\swap_{a+1,\bullet} \swap_{a,\bullet}([C\to\bullet] \cup T)) |_\Z \,,
\]
where $[C \to \bullet]$ denotes the constant tableau of shape $C$ that
puts a dot ``$\bullet$'' in each box.  It is easy to see that
$\jdt_C(T)$ is again an increasing tableau.  Similarly, if $\wh C
\subset \Lambda \ssm \nu$ is a subset of the minimal boxes of $\Lambda
\ssm \nu$, then the {\em reverse slide\/} of $T$ starting from $\wh C$
is the increasing tableau
\[
\wh\jdt_{\wh C}(T) = (\swap_{a,\bullet} \swap_{a+1,\bullet} \cdots \,
\swap_{b-1,\bullet} \swap_{b,\bullet}(T \cup [\wh C\to\bullet])) |_\Z \,.
\]
Forward and reverse slides are inverse operations in the sense that
\[
\wh\jdt_{\wh C'}(\jdt_C(T)) \ = \ T \ = \ \jdt_{C'}(\wh\jdt_{\wh C}(T)) \,,
\]
where $\wh C' = \sh(T) \ssm \sh(\jdt_C(T))$ and $C' = \sh(T) \ssm
\sh(\wh\jdt_{\wh C}(T))$.

\begin{example}
  Let $\La = \La_{E_6/P_6}$ be the partially ordered set associated to
  the Cayley plane.  We list the sequence of intermediate tableaux
  obtained when a forward slide is applied to a tableau $T$ of shape
  $(4,4,3)/(3,1)$, starting from the box $[2,3]$ in row 2 and column
  3.  The resulting tableau $\jdt_{\{[2,3]\}}(T)$ has shape
  $(4,3,2)/(3)$.
  \[
  \tableau{10}{
   [lh]&[h]&[t]&[A]{1}\\
       &   &{\bullet}&[A]{2}&{4}&{6}\\
       &   &{3}&{4}&{5}&[r]\\
       &   &   &   &[lb]&[b]&[h]&[hr]
  }
  \!\!\mapsto\!\!
  \tableau{10}{
   [lh]&[h]&[t]&[A]{1}\\
       &   &{2}&{\bullet}&{4}&{5}\\
       &   &{3}&{4}&{5}&[r]\\
       &   &   &   &[lb]&[b]&[h]&[hr]
  }
  \!\!\mapsto\!\!
  \tableau{10}{
   [lh]&[h]&[t]&[A]{1}\\
       &   &{2}&{4}&{\bullet}&{5}\\
       &   &{3}&{\bullet}&{5}&[r]\\
       &   &   &   &[lb]&[b]&[h]&[hr]
  }
  \!\!\mapsto\!\!
  \tableau{10}{
   [lh]&[h]&[t]&[A]{1}\\
       &   &{2}&{4}&{5}&{\bullet}\\
       &   &[A]{3}&{5}&{\bullet}&[r]\\
       &   &   &   &[lb]&[b]&[h]&[hr]
  }
  \]
\end{example}
\vspace{1mm}

We will say that two increasing tableaux $S$ and $T$ are {\em jeu de
  taquin equivalent\/} if $S$ can be obtained by applying a sequence
of forward and reverse jeu de taquin slides to $T$.  The {\em jeu de
  taquin class\/} of $T$ is the set $[T]$ of all increasing tableaux
that are jeu de taquin equivalent to $T$.  If the partially ordered
set $\La$ is not understood from the context, then we will also denote
this equivalence class by $[T]_\La$.  We remark that if $\La$ is not
too large, then it is easy to generate a list of all tableaux in the
jeu de taquin class $[T]$ using a computer.  In particular, this can
be done when $\La$ is the partially ordered set associated to the
Cayley plane or the Freudenthal variety.

The following fact follows immediately from the definitions.

\begin{lemma}\label{lem:jdt_interval}
  Let $T$ and $T'$ be jeu de taquin equivalent increasing tableaux and
  let $[a,b]$ be any integer interval.  Then $T|_{[a,b]}$ and
  $T'|_{[a,b]}$ are jeu de taquin equivalent.
\end{lemma}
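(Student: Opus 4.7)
I would reduce to the case of a single forward slide $T' = \jdt_C(T)$ by inducting on the number of slides and using that reverse slides are inverses of forward slides. Let $T$ have values in $[a', b']$, so the slide is the ordered composition $\swap_{b',\bullet} \circ \cdots \circ \swap_{a',\bullet}$ applied to $[C \to \bullet] \cup T$. Partition these swaps into three phases according to whether $v < a$, $v \in [a,b]$, or $v > b$, and let $U$ and $V$ denote the intermediate tableaux after the low and middle phases. Since $\swap_{v,\bullet}$ modifies only boxes with value $v$ or a dot, the low phase leaves the positions of values in $[a,b]$ fixed, so $U|_{[a,b]} = T|_{[a,b]}$; symmetrically, $V|_{[a,b]} = T'|_{[a,b]}$. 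The lemma thus reduces to showing that $T|_{[a,b]}$ and $V|_{[a,b]}$ are jeu de taquin equivalent.

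Let $D$ denote the set of dot positions in $U$. The middle-phase swaps modify only $[a,b]$-valued boxes and dots, so their combined action on $U$ is entirely determined by the sub-configuration consisting of $T|_{[a,b]}$ together with dots at $D$, using the ambient neighbor relation of $\La$. If $D$ is a subset of the maximal boxes of the lower shape $\la'' := \la \cup T^{-1}(<a)$ of $T|_{[a,b]}$, then this action is precisely the forward slide $\jdt_D(T|_{[a,b]})$, giving $V|_{[a,b]} = \jdt_D(T|_{[a,b]})$ and the desired equivalence.

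The main obstacle is verifying that every element of $D$ is a maximal box of $\la''$. I would prove by induction on $v$ the stronger invariant that, after $\swap_{v,\bullet}$, every dot position $\al$ has no cover in $\la_v := \la \cup T^{-1}(\leq v)$; the case $v = a-1$ yields the claim. Suppose for contradiction that some cover $\be$ of $\al$ lies in $\la_v$. A lower-ideal argument excludes $\be \in \la$, using that either $\al \in C$ is maximal in $\la$, or $\al \in \nu/\la$ and $\be \in \la$ would force $\al \in \la$; hence $\be \in \nu/\la$ with $T(\be) = u \leq v$. Since $\al$ is a dot after $\swap_{v,\bullet}$, it must have been a dot at every step from when it first acquired its dot --- at initialization if $\al \in C$, or at $\swap_{T(\al),\bullet}$ if $\al \in \nu/\la$, with $T(\al) < u$ by the increasing property --- through step $\swap_{v,\bullet}$; in particular, $\al$ is a dot at step $\swap_{u,\bullet}$. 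Since the earlier swaps do not alter $\be$, at the start of $\swap_{u,\bullet}$ the box $\be$ still holds value $u$, so this swap exchanges $\al$'s dot with $\be$'s value. Thus $\al$ acquires value $u$ and retains it through all subsequent swaps, contradicting that $\al$ is a dot after $\swap_{v,\bullet}$.
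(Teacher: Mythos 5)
Your argument is correct and is exactly the verification the paper has in mind: it states this lemma with the remark that it ``follows immediately from the definitions'' and writes out no proof, and your three-phase decomposition of the swap sequence is the natural way to make that precise. The one point that genuinely requires an argument --- that the dot positions $D$ after the low phase are maximal boxes of the enlarged inner shape $\la\cup T^{-1}(<a)$, so that the middle phase really is a legal forward slide $\jdt_D(T|_{[a,b]})$ --- is handled correctly by your observation that a box which is still a dot after step $v$ has been a dot continuously, and hence would have absorbed the value of any covering box $\be$ with $T(\be)\le v$ at step $\swap_{T(\be),\bullet}$.
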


\subsection{Rectifications}

A {\em rectification\/} of an increasing tableau $T$ is any tableau of
straight shape that can be obtained by applying a sequence of forward
slides to $T$.  A central feature of Sch\"utzenberger's cohomological
jeu de taquin algorithm for semistandard Young tableaux is that each
semistandard tableau has a unique rectification, i.e.\ the outcome of
the jeu de taquin algorithm does not depend on the chosen inner
corners (see \cite{proctor:d-complete} and the references therein).
Thomas and Yong observed in \cite[Ex.~1.3]{thomas.yong:jeu} that this
does not hold for the $K$-theoretic jeu de taquin algorithm.  The
following example is slightly smaller.

\begin{example}\label{ex:non_urt_A}
  Let $\La = \La_{\Gr(3,6)}$.  Then the increasing tableau \
  $\tableau{10}{[lt]&[t]&[tr]\\[l]&&[atv]2\\[alh]1&[ah]3&[abr]4}$ \
  has the rectifications \
  $\tableau{10}{[alt]1&[ah]2&[ahr]4\\[avb]3&&[r]\\[lb]&[b]&[br]}$ \
  and \
  $\tableau{10}{[alt]1&[at]2&[ahr]4\\[alb]3&[abr]4&[r]\\[lb]&[b]&[br]}$\
  .\medskip
\end{example}

\begin{defn}
  An increasing tableau $U$ of straight shape is a {\em unique
    rectification target\/} if, for every increasing tableau $T$ that
  has $U$ as a rectification, $U$ is the only rectification of $T$.
\end{defn}

Unique rectification targets are essential for using the $K$-theoretic
jeu de taquin algorithm to formulate Littlewood-Richardson rules.  If
$\la \subset \La_X$ is a straight shape associated to a minuscule
variety $X$, then Thomas and Yong define the {\em row-wise
  superstandard tableau\/} of shape $\la$ to be the unique increasing
tableau $S_\la$ that fills the first row of boxes in $\la$ with the
integers $1,2,\dots,\la_1$, fills the second row with $\la_1+1, \dots,
\la_1+\la_2$, etc.  Similarly, one can define a {\em column-wise
  superstandard tableau\/} $\wh S_\la$ in which the integers increase
consecutively in each column.

\begin{example}\label{ex:superstd}
  Let $X = \OG(n,2n)$ be a maximal orthogonal Grassmannian.  Then the
  shape $\la = (5,3,2)$ has the following row-wise and column-wise
  superstandard tableaux:
  \[
  S_\la =
  \tableau{12}{{1}&{2}&{3}&{4}&{5}\\ 
  &{6}&{7}&{8}\\ 
  &&{9}&{10}}
  \hspace{5mm} \text{and} \hspace{5mm}
  \wh S_\la =
  \tableau{12}{{1}&{2}&{4}&{7}&{10}\\ 
  &{3}&{5}&{8}\\ 
  &&{6}&{9}} \ . 
  \]
\end{example}

It was proved in \cite{thomas.yong:jeu,
  clifford.thomas.ea:k-theoretic} that the row-wise superstandard
tableau $S_\la$ is a unique rectification target if $X$ is a
Grassmannian of type $A$ or a maximal orthogonal Grassmannian;
alternative proofs are given in sections \ref{sec:grass} and
\ref{sec:maxog} below.  However, the following example shows that this
fails for the Freudenthal variety.

\begin{example}\label{ex:non_urt_E7}
  Let $X = E_7/P_7$ be the Freudenthal variety and set $\la =
  (5,3,3)$.  Then the row-wise superstandard tableau $S_\la$ and the
  column-wise superstandard tableau $\wh S_\la$ both fail to be unique
  rectification targets.  In fact, consider the tableaux
  \[
  T = \tableau{10}{
    {}&{}&{}&{}&{}\\
    &&&{}&1&2&3&5\\
    &&&1&2&4&6&8\\
    &&&&&7&9&{}\\
    &&&&&&{10}&{}&{}\\
    &&&&&&{11}&{}&{}\\
    &&&&&&&&{}\\
    &&&&&&&&{}\\
    &&&&&&&&{}
  } \text{ \ \ \ \ and \ \ \ \ }
  \wh T = \tableau{10}{
    {}&{}&{}&{}&{}\\
    &&&{}&1&2&4&5\\
    &&&1&3&4&6&8\\
    &&&&&7&9&{}\\
    &&&&&&{10}&{}&{}\\
    &&&&&&{11}&{}&{}\\
    &&&&&&&&{}\\
    &&&&&&&&{}\\
    &&&&&&&&{}
  } \ .
  \]
  Then $S_\la$ is the only rectification of $\jdt_{\{[1,5]\}}(T)$, but
  $S_\la$ is not a rectification of $\jdt_{\{[2,4]\}}(T)$.  Similarly,
  $\wh S_\la$ is the only rectification of $\jdt_{\{[1,5]\}}(\wh T)$,
  but not a rectification of $\jdt_{\{[2,4]\}}(\wh T)$.
\end{example}

The next example shows that column-wise superstandard tableaux may
fail to be unique rectification targets for $\La_X$ when $X$ is a
maximal orthogonal Grassmannian.

\begin{example}\label{ex:non_urt_B}
  Let $X = \OG(6,12)$ and set $\la = (4,2)$.  Then the column-wise
  superstandard tableau $\wh S_\la$ is not a unique rectification
  target.  In fact, if $T$ is the tableau displayed below, then $\wh
  S_\la$ is a rectification of $\jdt_{\{[1,4]\}}(T)$, but $\wh S_\la$
  is not a rectification of $\jdt_{\{[2,2]\}}(T)$.
  \[
  T = \tableau{10}{
    { }&{ }&{ }&{ }&{2}\\
       &{ }&{1}&{2}&{4}\\
       &   &{3}&{5}&{ }\\
       &   &   &{6}&{ }\\
       &   &   &   &{ }
  }
  \hspace{20mm}
  \wh S_\la =
  \tableau{10}{1&2&4&6&{}\\&3&5&{}&{}\\&&{}&{}&{}\\&&&{}&{}\\&&&&{}}
  \]
\end{example}

\begin{remark}\label{rmk:all_urt}
  One can show that if $X = \Gr(m,m+k)$ with $\max(m,k) \leq 2$ then
  every increasing tableau of straight shape in $\La_X$ is a unique
  rectification target.  The same is true if $X = \OG(n,2n)$ with $n
  \leq 5$, if $X = Q^{2n}$ is an even quadric, or if $X = E_6/P_6$.
  On the other hand, examples \ref{ex:non_urt_A}, \ref{ex:non_urt_E7},
  and \ref{ex:non_urt_B} show that if $X$ is any other minuscule
  variety, then there exist tableaux of straight shapes in $\La_X$
  that are not unique rectification targets.
\end{remark}

In this paper we show that the following definition gives a uniform
construction of unique rectification targets for all minuscule
varieties.

\begin{defn}
  Given a straight shape $\la \subset \La$, define the {\em minimal
    increasing tableau\/} $M_\la$ of shape $\la$ by setting
  $M_\la(\al)$ equal to the maximal cardinality of a totally ordered
  subset of $\La$ with $\al$ as its maximal element, for each box $\al
  \in \la$.  We will say that $\La$ is a {\em unique rectification
    poset\/} if $M_\la$ is a unique rectification target for all
  straight shapes $\la \subset \La$.
\end{defn}

In other words, $M_\la$ puts the integer $1$ in the minimal boxes of
$\la$ and fills the rest of the boxes with the smallest possible
values allowed in an increasing tableau.  If $X$ is a minuscule
variety and $\La = \La_X$, then $M_\la(\al) = \height(\al)$ for each
$\al \in \la$.

\begin{example}\label{ex:mininc}
  The minimal increasing tableau of the shape $\la$ from
  Example~\ref{ex:superstd} is given by
  \[
  M_\la =
  \tableau{12}{{1}&{2}&{3}&{4}&{5}\\ 
    &{3}&{4}&{5}\\ 
    &&{5}&{6}
  } \ .\medskip
  \]
\end{example}

\begin{thm}\label{thm:mininc_urt}
  If $X$ is any minuscule variety, then $\La_X$ is a unique
  rectification poset.
\end{thm}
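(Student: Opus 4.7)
The plan is to verify the theorem on a case-by-case basis along the list of minuscule varieties in Table~\ref{tab:comin}: the type~A Grassmannians $\Gr(m,m{+}k)$, the maximal orthogonal Grassmannians $\OG(n,2n)$, the even quadrics $Q^{2n}$, the Cayley plane $E_6/P_6$, and the Freudenthal variety $E_7/P_7$. The cases split naturally into three groups, each requiring a different tool from later sections of the paper.

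For the Cayley plane and the even quadrics, I would lean on Remark~\ref{rmk:all_urt}, which asserts the stronger statement that \emph{every} increasing tableau of straight shape in $\La_X$ is a unique rectification target. Because $\La_{E_6/P_6}$ has only $16$ boxes and $\La_{Q^{2n}}$ is a disjoint union of two short chains glued at a point, both the set of straight shapes and the jeu de taquin classes of skew increasing tableaux over them are small enough to enumerate exhaustively by computer, which is how I would establish the remark. Once it is established, the URT property for $M_\la$ is immediate.

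For $X = \Gr(m,m{+}k)$ I would invoke the main result of Section~\ref{sec:grass}, namely that the $K$-Knuth class of the row word of an increasing tableau is an invariant of its jeu de taquin class. Then for any skew increasing tableau $T$ having $M_\la$ as a rectification, the row word of every rectification lies in the same $K$-Knuth class as the row word of $M_\la$. It remains to show that $M_\la$ is determined among straight-shape increasing tableaux by its $K$-Knuth class; this is the combinatorial core of the argument and should follow by induction on $|\la|$, peeling off minimal entries and exploiting the fact that $M_\la(\al) = \height(\al)$ leaves no slack for an alternative filling with the same Hecke product. For $X = \OG(n,2n)$ the same scheme carries over, with the weak $K$-Knuth relation of Section~\ref{sec:maxog} replacing the type~A version; the translation is straightforward once the weak relation has been shown to be invariant under jeu de taquin slides on $\La_{\OG(n,2n)}$.

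The main obstacle is the Freudenthal variety $X = E_7/P_7$. Here the $K$-Knuth approach available in types A and D does not apply uniformly, and Example~\ref{ex:non_urt_E7} already rules out the natural superstandard analogues, so the minimality of $M_\la$ must be used essentially. My approach would be to combine direct enumeration with structural arguments: since $|\La_{E_7/P_7}| = 27$, one can in principle generate, for each straight shape $\la$, the entire jeu de taquin class $[M_\la]$ and check that each member rectifies uniquely to $M_\la$. To keep this feasible I would first prove a structural lemma isolating the distinctive feature of $M_\la$, for instance that each slide on a skew tableau whose row word realizes the Hecke product $w_\la$ is forced into $M_\la$ by the cominuscule analogues $w_{\la/\mu}$ of 321-avoiding permutations introduced in Section~\ref{sec:comin}. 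This structural step is where I expect the real difficulty to lie, since it must account for why the Freudenthal variety admits URTs at all despite the failure of $S_\la$ and $\wh S_\la$.
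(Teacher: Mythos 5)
Your plan matches the paper's strategy in outline: a case-by-case verification in which type~A is handled by the Hecke/$K$-Knuth invariance of Section~\ref{sec:grass} together with an induction showing that $M_\la$ is the unique straight-shape tableau with Hecke permutation $w(M_\la)$ (this is exactly Theorem~\ref{thm:mininc_urt_A}), the quadrics are dispatched by noting every straight-shape tableau is a unique rectification target, and the two exceptional varieties are settled by exhaustive computation. Two points of divergence are worth noting. First, for $\OG(n,2n)$ the paper does not rerun the weak $K$-Knuth argument; it reduces to type~A via the doubling map $T \mapsto T^2$ and Proposition~\ref{prop:translateAB}(b), using that $M_\la^2$ is again a minimal increasing tableau of type~A. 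Your proposed route through the weak relation is not as straightforward as you suggest: invariance under slides (Theorem~\ref{thm:jdt_weak_kknuth}) is not enough, and the type~A induction on the Hecke permutation does not transfer verbatim to shifted shapes because the extra relation $(a,b,\bu) \equivB (b,a,\bu)$ destroys the bookkeeping; the doubling trick is what makes the reduction clean. Second, for $E_7/P_7$ you overestimate the difficulty: no structural lemma is needed, since one simply generates the finite jeu de taquin class $[M_\la]$ for each straight shape $\la$ and verifies it contains exactly one tableau of straight shape, which immediately yields the unique rectification property (any skew tableau rectifying to $M_\la$ lies in $[M_\la]$, so all its rectifications do too). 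This is precisely the computer check the paper performs for both $E_6/P_6$ and $E_7/P_7$, and it avoids any appeal to results from later sections that would risk circularity.
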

\begin{proof}
  We must show that if $\la \subset \La_X$ is any straight shape, then
  $M_\la$ is a unique rectification target.  This follows from
  Theorem~\ref{thm:mininc_urt_A} if $X$ is a Grassmannian of type A,
  and from Corollary~\ref{cor:mininc_urt_B} if $X$ is a maximal
  orthogonal Grassmannian.  If $X$ is a quadric hypersurface, then it
  is an easy exercise to check that every increasing tableau of
  straight shape in $\La_X$ is a unique rectification target.
  Finally, when $X$ is the Cayley plane or the Freudenthal variety, we
  have checked by computer that $M_\la$ is the only tableau of
  straight shape in the jeu de taquin class $[M_\la]$.
\end{proof}

\begin{lemma}\label{lem:urt_class}
  Let $U$ be a unique rectification target.  Then $U$ is the only
  tableau of straight shape in its jeu de taquin class $[U]$.
\end{lemma}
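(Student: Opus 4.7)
The plan is to establish the following stronger claim: if $V$ is any increasing tableau jeu de taquin equivalent to $U$, then $U$ is a rectification of $V$ via some sequence of forward slides. Granting this, the lemma is immediate: taking $V = U'$, since $U'$ already has straight shape every forward slide from $U'$ uses $C \subset \emptyset$ and is trivial, so the only rectification of $U'$ is $U'$ itself, forcing $U = U'$.

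I would prove the stronger claim by induction on the length $n$ of a slide sequence $U = T_0 \to T_1 \to \cdots \to T_n = V$ connecting the two tableaux. The base case $n = 0$ is trivial. For the inductive step the induction hypothesis supplies a forward-slide rectification $T_{n-1} \to \cdots \to U$, and I examine the last slide $T_{n-1} \to V$. If it is a reverse slide, the inverse identity $\wh\jdt_{\wh C'}(\jdt_C(T)) = T$ recorded earlier implies that $V \to T_{n-1}$ is itself a forward slide, so prepending this slide to the rectification of $T_{n-1}$ yields a forward-slide rectification of $V$ to $U$.

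The only genuinely interesting case is when the last slide $T_{n-1} \to V$ is a forward slide. Here I would first rectify $V$ further by any sequence of forward slides to obtain some straight-shape tableau $W$; appending the slides $T_{n-1} \to V \to \cdots \to W$ exhibits $W$ as a rectification of $T_{n-1}$. Since the induction hypothesis already gives $U$ as a rectification of $T_{n-1}$, the URT property of $U$ forces $W = U$, so $V$ rectifies to $U$ as required. The main conceptual point, and the only place the URT hypothesis genuinely enters, is this last step: it converts the mere existence of one rectification of $T_{n-1}$ into uniqueness, which then pins down the newly constructed $W$. The other key ingredient, used in the reverse-slide case, is simply the inverse relationship between forward and reverse slides.
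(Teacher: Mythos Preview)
Your proof is correct and follows essentially the same approach as the paper. Both arguments induct along a slide sequence $U=T_0,\dots,T_m$ and use the observation that adjacent tableaux share at least one rectification together with the URT hypothesis; the paper phrases the inductive claim as ``$U$ is the only rectification of $T_i$'' and treats the two slide directions uniformly, while you state it as ``$U$ is a rectification of $T_i$'' and separate the forward/reverse cases explicitly, invoking URT to recover uniqueness where needed.
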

\begin{proof}
  Let $T \in [U]$ be any tableau of straight shape, and choose a
  sequence $U = T_0, T_1, \dots, T_m = T$ such that each tableau $T_i$
  can be obtained by applying a slide to $T_{i-1}$.  Since $T_{i-1}$
  and $T_i$ share at least one rectification, it follows by induction
  that $U$ is the only rectification of $T_i$ for each $i$.  It
  follows that $T = U$.
\end{proof}

\subsection{$K$-infusion}

Jeu de taquin slides are special cases of the more general {\em
  $K$-infusion algorithm\/} from \cite{thomas.yong:jeu}, which can be
defined as follows.  For each integer $a \in \Z$ we introduce a {\em
  barred\/} copy $\ov{a}$, and we set $\ov{\ov a} = a$.  Let $\ov\Z =
\{ \ov{a} \mid a \in \Z \}$ be the set of all barred integers.  If $Y$
is a tableau with values in $\Z \cup \ov\Z$, then we let $\ov Y$
denote the tableau obtained by replacing $Y(\al)$ with $\ov{Y(\al)}$
for each $\al \in \sh(Y)$.  Define an automorphism of the set of all
tableaux with values in $\Z \cup \ov\Z$ by
\[
\Psi = \prod_{a=-\infty}^\infty \prod_{b=\infty}^{-\infty}
\swap_{\ov a,b} \,.
\]
This is well defined because only finitely many factors of $\Psi$ will
result in changes to any given tableau.  Furthermore, since
$\swap_{\ov a,b}$ commutes with $\swap_{\ov{a'},b'}$ whenever $a \neq
a'$ and $b \neq b'$, the factors of $\Psi$ can be rearranged in any
order such that $\swap_{\ov a,b}$ is applied after
$\swap_{\ov{a'},b'}$ whenever $a \leq a'$ and $b \geq b'$.  This
implies that $\Psi = \prod_{b=\infty}^{-\infty}
\prod_{a=-\infty}^\infty \swap_{\ov a,b}$, so we have $\ov{\Psi(Y)} =
\Psi^{-1}(\ov{Y})$, hence the map $Y \mapsto \ov{\Psi(Y)}$ is an
involution on the set of all tableaux $Y$ with values in $\Z\cup
\ov\Z$.

\begin{example}
  The following tableaux are mapped to each other by the involution.
  \[
  Y = \text{\small$\tableau{12}{[lh]&[h]&{}&[aTV]\ov{2}\\
  &&[aLT]\ov{1}&[aB]\ov{3}&[aHR]\ov{4}&[aTV]1\\ 
  &&[aVB]\ov{3}&[aLH]1&[a]2&[aR]3\\ &&&&[aLB]3&[aB]4&[aHR]5&[a]}$}
  \hspace{7mm} ; \hspace{7mm}
  \ov{\Psi(Y)} = \text{\small$\tableau{12}{[lh]&[h]&{}&[aTV]\ov{1}\\
  &&[aLT]\ov{1}&[a]\ov{2}&[aT]\ov{3}&[aHR]\ov{4}\\ 
  &&[aLB]\ov{3}&[aB]\ov{4}&[a]\ov{5}&[aTV]2\\ &&&&[aLH]1&[aB]3&[aHR]4&[a]}$}
  \]
\end{example}

Given increasing tableaux $S$ and $T$ such that $\sh(S) =
\mu/\la$ and $\sh(T) = \nu/\mu$ for straight shapes $\la \subset \mu
\subset \nu$, we define
\begin{equation}\label{eqn:kinfusion}
  \jdt_S(T) := \Psi(\ov S \cup T)|_\Z \text{ \ \ \ \ and \ \ \ \ }
  \wh\jdt_T(S) := \Psi^{-1}(S \cup \ov{T})|_{\Z} \,.
\end{equation}
The following result is proved in \cite[Thm.~3.1]{thomas.yong:jeu}.
We sketch the short proof for convenience.

\begin{prop}[Thomas and Yong]\label{prop:kinfusion}
  Let $\la \subset \nu$ be straight shapes in $\La$.  Then the map
  $(S,T) \mapsto (\jdt_S(T), \wh\jdt_T(S))$ is an involution on the
  set of all pairs $(S,T)$ of increasing tableaux for which
  $\sh(S)=\mu/\la$ and $\sh(T)=\nu/\mu$ for some straight shape $\mu$.
  Furthermore, the tableau $\jdt_S(T)$ can be obtained by applying a
  sequence of forward slides to $T$, and $\wh\jdt_T(S)$ is obtained by
  applying a sequence of reverse slides to $S$.
\end{prop}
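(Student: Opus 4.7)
The plan is to leverage the identity $\ov{\Psi(Y)} = \Psi^{-1}(\ov Y)$ established in the paragraph preceding the statement. From this identity, the involution property drops out formally; the genuine technical content is in recognizing $\jdt_S(T)$ and $\wh\jdt_T(S)$ as sequences of ordinary slides.

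First I would identify the full output $\Psi(\ov S \cup T)$, not merely its restriction to $\Z$. By definition, the unbarred part of $\Psi(\ov S \cup T)$ is $\jdt_S(T)$. Applying the identity with $Y = \ov S \cup T$ yields $\ov{\Psi(\ov S \cup T)} = \Psi^{-1}(S \cup \ov T)$, and restricting to $\Z$ shows that the barred part of $\Psi(\ov S \cup T)$, after stripping bars, equals $\Psi^{-1}(S \cup \ov T)|_\Z = \wh\jdt_T(S)$. Hence
\[
\Psi(\ov S \cup T) \ = \ \jdt_S(T) \cup \ov{\wh\jdt_T(S)},
\]
a tableau of shape $\nu/\la$ split along some intermediate straight shape $\mu'$ into an unbarred region $\mu'/\la$ supporting $\jdt_S(T)$ and a barred region $\nu/\mu'$ supporting $\ov{\wh\jdt_T(S)}$.

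The involution then drops out immediately. Setting $(S',T') := (\jdt_S(T), \wh\jdt_T(S))$, we have $\ov{S'} \cup T' = \ov{\Psi(\ov S \cup T)} = \Psi^{-1}(S \cup \ov T)$, so $\Psi(\ov{S'} \cup T') = S \cup \ov T$. On the other hand, applying the previous paragraph with $(S',T')$ in place of $(S,T)$ gives $\Psi(\ov{S'} \cup T') = \jdt_{S'}(T') \cup \ov{\wh\jdt_{T'}(S')}$. Matching barred with barred and unbarred with unbarred yields $\jdt_{S'}(T') = S$ and $\wh\jdt_{T'}(S') = T$, proving the involution.

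For the slide interpretation of $\jdt_S(T)$, I would use the rearrangement freedom of $\Psi$ to group its factors by $a$, writing $\Psi = \cdots \Phi_a \Phi_{a+1} \cdots$ with $\Phi_a := \prod_{b=\infty}^{-\infty}\swap_{\ov a,b}$ composed so that $\Phi_a$ is applied later for smaller $a$. Only those $\Phi_a$ whose index $a$ appears as a value of $S$ contribute. For the largest such $a$, the positions $S_a \subset \mu/\la$ holding value $a$ in $S$ are maximal boxes of $\mu/\la$ (since $S$ is increasing), hence valid inner corners for a forward slide on $T$; the inner product $\Phi_a$ then swaps the $\ov a$'s with integer values in increasing order of $b$, exactly reproducing $\jdt_{S_a}(T)$. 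Iterating on the pair $(S \ssm S_a, \jdt_{S_a}(T))$ and inducting on the number of distinct values of $S$ realizes $\jdt_S(T)$ as a composition of forward slides on $T$. An analogous grouping of $\Psi^{-1}$ exhibits $\wh\jdt_T(S)$ as a sequence of reverse slides on $S$.

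The main obstacle will be the inductive step: after $\Phi_a$ has been carried out for the maximum value, the $\ov a$'s lie scattered in what was the outer region, and one must check that they remain inert under the subsequent $\Phi_{a'}$ (for $a' < a$), so that the remaining factors still realize a bona fide sequence of forward slides on the current integer tableau. This should follow from the fact that every $\swap_{\ov a, b}$ involving the already-processed value $a$ appears inside the completed $\Phi_a$, so the surviving $\ov a$-values cannot move further and simply act as inert obstacles compatible with the inductive geometry; in particular the residual configuration $(S \ssm S_a, \jdt_{S_a}(T))$ continues to satisfy the hypotheses of the proposition.
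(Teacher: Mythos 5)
Your proposal is correct and follows essentially the same route as the paper: the involution is extracted formally from the identity $\ov{\Psi(Y)}=\Psi^{-1}(\ov Y)$, and the forward-slide interpretation comes from factoring $\Psi$ into per-value blocks $\prod_b \swap_{\ov a,b}$ (the paper's intermediate tableaux $Y_m$), with the already-processed barred entries checked to be inert. The only cosmetic difference is that the paper obtains the reverse-slide statement for $\wh\jdt_T(S)$ as a corollary of the involution (via $S=\jdt_{T'}(S')$) rather than by a symmetric analysis of $\Psi^{-1}$.
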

\begin{proof}
  For each $m \in \Z$ we define the tableau
  \[
  Y_m = \left( \prod_{a=m+1}^\infty \prod_{b=\infty}^{-\infty}
    \swap_{\ov a,b} \right) (\ov S \cup T) \,.
  \]
  Then we have $Y_{m-1}|_\Z = \jdt_{C_m}(Y_m|_\Z)$, where $C_m =
  Y_m^{-1}(\ov m) = S^{-1}(m)$ is the set of boxes mapped to $m$ by
  $S$.  It follows by descending induction on $m$ that
  $\ov{Y_m}|_{[-\infty,m]}$, $Y_m|_\Z$, and $\ov{Y_m}|_{[m+1,\infty]}$
  are increasing tableaux, with shapes given by
  $\sh(\ov{Y_m}|_{[-\infty,m]}) = \mu'_m/\la$, $\sh(Y_m|_\Z) =
  \mu_m/\mu'_m$, and $\sh(\ov{Y_m}|_{[m+1,\infty]}) = \nu/\mu_m$,
  where $\mu'_m$ and $\mu_m$ are straight shapes satisfying $\la
  \subset \mu'_m \subset \mu_m \subset \nu$.  In addition, $Y_m|_\Z$
  can be obtained from $T$ by a sequence of forward slides.  If we
  choose $m$ smaller than all the integers in $S$, then $Y_m =
  \Psi(\ov{S} \cup T)$, so that $\jdt_S(T) = Y_m|_\Z$ and
  $\wh\jdt_T(S) = \ov{Y_m}|_\Z$.  We deduce that $\jdt_S(T)$ and
  $\wh\jdt_T(S)$ are increasing tableaux of shapes $\sh(\jdt_S(T)) =
  \mu_m/\la$ and $\sh(\wh\jdt_T(S)) = \nu/\mu_m$, and $\jdt_S(T)$ can
  be obtained from $T$ by a sequence of forward slides.  The identity
  $\ov{\jdt_S(T)} \cup \wh\jdt_T(S) = \ov{\Psi(\ov{S} \cup T)}$ and
  the fact that $Y \mapsto \ov{\Psi(Y)}$ is an involution implies that
  $(S,T) \mapsto (\jdt_S(T),\wh\jdt_T(S))$ is an involution.  Finally,
  set $(T',S') = (\jdt_S(T),\wh\jdt_T(S))$ and notice that $S =
  \jdt_{T'}(S')$.  This shows that $S$ can be obtained by applying a
  sequence of forward slides to $S'$, or equivalently, $S' =
  \wh\jdt_T(S)$ is obtained by applying a sequence of reverse slides
  to $S$.
\end{proof}

\subsection{Combinatorial $K$-theory rings}\label{sec:comb_ktheory}

Assume that $\La$ is a unique rectification poset.  For each straight
shape $\la$ in $\La$, let $G_\la$ be a symbol, and let $\Gamma(\La) =
\bigoplus_\la \Z\,G_\la$ be the free abelian group generated by these
symbols.  We will say that a set $\tau$ of increasing tableaux is {\em
  locally finite\/} if all tableaux of $\tau$ have values in some
finite interval $[-c,c]$.  Assume that $\tau$ is locally finite and
closed under (forward and reverse) jeu de taquin slides.  For any skew
shape $\nu/\la$ we let $\tau(\nu/\la) = \# \{ T \in \tau \mid
\sh(T)=\nu/\la\}$ be the number of tableaux of shape $\nu/\la$ in
$\tau$.  Define a linear operator $F_\tau : \Gamma(\La) \to
\Gamma(\La)$ by $F_\tau(G_\la) = \sum_\nu \tau(\nu/\la)\, G_\nu$.  The
assumption that $\La$ has finitely many minimal boxes and each box of
$\La$ has finitely many covers implies that $F_\tau(G_\la)$ is a
finite linear combination.

\begin{lemma}\label{lem:commute}
  Let $\sigma$ and $\tau$ be locally finite sets of skew increasing
  tableaux, each closed under jeu de taquin slides.  Then the
  operators $F_\sigma$ and $F_\tau$ commute.
\end{lemma}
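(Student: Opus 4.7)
The plan is to expand both sides on a basis element $G_\la$ and interpret the coefficients as counts of pairs of tableaux, then exhibit a bijection via the $K$-infusion involution of Proposition~\ref{prop:kinfusion}.

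First I would compute, for any straight shape $\la$ and any straight shape $\nu$,
\[
F_\sigma(F_\tau(G_\la)) \ = \ \sum_\nu \Bigl( \sum_\mu \tau(\mu/\la)\, \sigma(\nu/\mu) \Bigr)\, G_\nu \,,
\]
so that the coefficient of $G_\nu$ counts pairs $(T,S)$ where $T \in \tau$ and $S \in \sigma$ have shapes $\sh(T)=\mu/\la$ and $\sh(S)=\nu/\mu$ for some intermediate straight shape $\mu$. Symmetrically, the coefficient of $G_\nu$ in $F_\tau(F_\sigma(G_\la))$ counts pairs $(S',T')$ with $S' \in \sigma$ and $T' \in \tau$ of shapes $\sh(S')=\mu'/\la$ and $\sh(T')=\nu/\mu'$ for some straight shape $\mu'$. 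It therefore suffices to produce a bijection between these two sets of pairs.

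Given a pair $(T,S)$ of the first kind, apply Proposition~\ref{prop:kinfusion} with $T$ in the role of ``$S$'' and $S$ in the role of ``$T$'' to obtain the pair $(\jdt_T(S),\, \wh\jdt_S(T))$. Set $S' := \jdt_T(S)$ and $T' := \wh\jdt_S(T)$; by Proposition~\ref{prop:kinfusion} these are increasing tableaux with $\sh(S')=\mu'/\la$ and $\sh(T')=\nu/\mu'$ for a common straight shape $\mu'$. The crucial point is that $S'$ is obtained from $S \in \sigma$ by a sequence of forward jeu de taquin slides, and $T'$ is obtained from $T \in \tau$ by a sequence of reverse slides. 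Since $\sigma$ and $\tau$ are each closed under forward and reverse slides, we conclude $S' \in \sigma$ and $T' \in \tau$, so $(S',T')$ is a pair of the second kind. The same construction applied to $(S',T')$ (again via Proposition~\ref{prop:kinfusion}) recovers $(T,S)$ because $(S,T)\mapsto(\jdt_S(T),\wh\jdt_T(S))$ is an involution. Hence we obtain a bijection, proving the two coefficients agree, and therefore $F_\sigma F_\tau = F_\tau F_\sigma$.

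There is no real obstacle here beyond unwinding the bookkeeping: the substantive content has already been established in Proposition~\ref{prop:kinfusion}, and the only thing to verify is the compatibility of the closure hypotheses with the infusion involution. One minor point worth checking explicitly is local finiteness, which ensures that the sums defining $F_\sigma(G_\la)$ and $F_\tau(G_\la)$, and hence their compositions, are finite in each coefficient, so that the identity of linear operators makes sense on each basis vector $G_\la$.
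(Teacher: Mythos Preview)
Your proof is correct and follows essentially the same approach as the paper: both expand the compositions on a basis element $G_\la$, identify the coefficient of $G_\nu$ as a count of pairs of tableaux stacked between $\la$ and $\nu$, and then invoke the $K$-infusion involution of Proposition~\ref{prop:kinfusion} to match the two counts. The paper's proof is terser (it simply asserts that the two sums $\sum_\mu \sigma(\nu/\mu)\tau(\mu/\la)$ and $\sum_\mu \tau(\nu/\mu)\sigma(\mu/\la)$ agree by Proposition~\ref{prop:kinfusion}), whereas you spell out the bijection and the role of the closure hypotheses explicitly, but the underlying argument is the same.
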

\begin{proof}
  The coefficient of $G_\nu$ in $F_\sigma(F_\tau(G_\la))$ is equal to
  $\sum_\mu \sigma(\nu/\mu) \tau(\mu/\la)$, and the coefficient of
  $G_\nu$ in $F_\tau(F_\sigma(G_\la))$ is equal to $\sum_\mu
  \tau(\nu/\mu) \sigma(\mu/\la)$.  It follows from
  Proposition~\ref{prop:kinfusion} that these sums are equal to each
  other.
\end{proof}

For each straight shape $\la \subset \La$ we set $F_\la =
F_{[M_\la]}$.  It follows from Lemma~\ref{lem:urt_class} that the
coefficient of $G_\nu$ in $F_\la(G_\mu)$ is the number of increasing
tableaux of shape $\nu/\mu$ that rectify to the minimal increasing
tableau $M_\la$.

\begin{prop}\label{prop:assoc}
  Assume that $\La$ is a unique rectification poset.  Then the
  bilinear map $\Gamma(\La) \times \Gamma(\La) \to \Gamma(\La)$
  defined by $G_\la \cdot G_\mu = F_\la(G_\mu)$ gives $\Gamma(\La)$
  the structure of a commutative and associative ring with unit $1 =
  G_\emptyset$.  Furthermore, for any locally finite set $\tau$ of
  increasing tableaux that is closed under jeu de taquin slides and
  any $f \in \Gamma(\La)$, we have $F_\tau(f) = F_\tau(1)\cdot f$.
\end{prop}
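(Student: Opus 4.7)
The plan is to leverage the already-established Lemma~\ref{lem:commute} and Lemma~\ref{lem:urt_class} through a short sequence of purely algebraic manipulations. I would prove the claims in the order: two-sided unit together with the identity $F_{[M_\la]}(1) = G_\la$; then commutativity; then the final formula $F_\tau(f) = F_\tau(1)\cdot f$; and finally associativity.

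First I would observe that the empty tableau admits no jeu de taquin slides, so $[M_\emptyset] = \{M_\emptyset\}$ and $F_{[M_\emptyset]}(G_\mu) = G_\mu$, giving $G_\emptyset \cdot G_\mu = G_\mu$. The companion identity $F_{[M_\la]}(1) = F_{[M_\la]}(G_\emptyset) = \sum_\nu [M_\la](\nu)\,G_\nu = G_\la$ then follows from Lemma~\ref{lem:urt_class}, since $M_\la$ is the only straight-shape tableau in its jeu de taquin class. Commutativity is then immediate from Lemma~\ref{lem:commute}: applying the commuting operators $F_{[M_\la]}$ and $F_{[M_\mu]}$ to $1$ gives
\[
G_\la\cdot G_\mu \;=\; F_{[M_\la]}(F_{[M_\mu]}(1)) \;=\; F_{[M_\mu]}(F_{[M_\la]}(1)) \;=\; G_\mu\cdot G_\la,
\]
which extends to all of $\Gamma(\La)$ by bilinearity and also yields $G_\mu\cdot G_\emptyset = G_\emptyset\cdot G_\mu = G_\mu$, completing the unit axiom.

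For the formula $F_\tau(f) = F_\tau(1)\cdot f$, by bilinearity it suffices to verify $F_\tau(G_\mu) = F_\tau(1)\cdot G_\mu$, and I would compute
\[
F_\tau(1)\cdot G_\mu \;=\; \sum_\rho \tau(\rho)\, G_\rho\cdot G_\mu \;=\; \sum_\rho \tau(\rho)\, F_{[M_\mu]}(G_\rho) \;=\; F_{[M_\mu]}(F_\tau(1)) \;=\; F_\tau(F_{[M_\mu]}(1)) \;=\; F_\tau(G_\mu),
\]
using commutativity in the second step, linearity of $F_{[M_\mu]}$ in the third, Lemma~\ref{lem:commute} in the fourth, and the identity $F_{[M_\mu]}(1) = G_\mu$ in the last.

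Associativity then follows by one further application of the same machinery:
\[
(G_\la\cdot G_\mu)\cdot G_\nu \;=\; G_\nu\cdot(G_\la\cdot G_\mu) \;=\; F_{[M_\nu]}(F_{[M_\la]}(G_\mu)) \;=\; F_{[M_\la]}(F_{[M_\nu]}(G_\mu)) \;=\; G_\la\cdot(G_\nu\cdot G_\mu) \;=\; G_\la\cdot(G_\mu\cdot G_\nu),
\]
using commutativity at both ends and Lemma~\ref{lem:commute} in the middle. Since all the substantive combinatorics is already absorbed into Lemma~\ref{lem:commute} (via the $K$-infusion involution of Proposition~\ref{prop:kinfusion}) and Lemma~\ref{lem:urt_class}, I do not expect any genuine obstacle here; the only point requiring care is the ordering of steps, so that each identity is invoked only once the ingredients it needs have been established.
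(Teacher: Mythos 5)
Your proof is correct and follows essentially the same route as the paper's: both arguments rest entirely on $F_\la(1)=G_\la$ (from Lemma~\ref{lem:urt_class}) and the commutativity of the operators $F_\sigma$, $F_\tau$ (Lemma~\ref{lem:commute}), differing only in the order in which commutativity and the identity $F_\tau(f)=F_\tau(1)\cdot f$ are deduced from one another. Your explicit verification of the unit axiom is a harmless addition that the paper leaves implicit.
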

\begin{proof}
  It follows from Lemma~\ref{lem:urt_class} that $F_\la(1) = G_\la$,
  and Lemma~\ref{lem:commute} implies that $F_\tau(G_\la) =
  F_\tau(F_\la(1)) = F_\la(F_\tau(1)) = G_\la \cdot F_\tau(1)$.  It
  follows that $F_\tau(f) = f \cdot F_\tau(1)$ for all $f \in
  \Gamma(\La)$ by linearity.  Commutativity follows from this because
  $G_\la \cdot G_\mu = F_\la(G_\mu) = G_\mu \cdot F_\la(1) = G_\mu
  \cdot G_\la$, and associativity follows because $G_\la \cdot (G_\mu
  \cdot G_\nu) = F_\la(F_\mu(F_\nu(1))) = F_\nu(F_\la(F_\mu(1))) =
  G_\nu \cdot (G_\la \cdot G_\mu) = (G_\la \cdot G_\mu) \cdot G_\nu$.
\end{proof}

\begin{example}
  Let $X = E_6/P_6$ be the Cayley plane.  Then we have $G_2 \cdot G_2
  = G_{(4)} + G_{(3,1)} + G_{(4,1)}$ in $\Gamma(\La_X)$, due to the
  following tableaux that all rectify to $M_{(2)}$.
  \[
  \tableau{10}{[lh]&[h]&[A]1&2\\ &&[l]&[t]&{}&[tr]\\ 
    &&[lb]&[b]&&[r]\\ &&&&[lb]&[b]&[h]&[hr]}
  \hspace{5mm}
  \tableau{10}{[lh]&[h]&[A]1&[tr]\\ &&[A]2&&[t]&[tr]\\ 
    &&[lb]&[b]&&[r]\\ &&&&[lb]&[b]&[h]&[hr]}
  \hspace{5mm}
  \tableau{10}{[lh]&[h]&[A]1&2\\ &&2&&[t]&[tr]\\
    &&[lb]&[b]&&[r]\\ &&&&[lb]&[b]&[h]&[hr]}\medskip
  \]
\end{example}

The {\em structure constants\/} of the ring $\Gamma(\La)$ are the
integers $c^\nu_{\la,\mu} = c^\nu_{\la,\mu}(\La) := [M_\la](\nu/\mu)$.
Equivalently, the identity $G_\la \cdot G_\mu = \sum_\nu
c^\nu_{\la,\mu}\, G_\nu$ holds in $\Gamma(X)$.

The next two results have appeared earlier when $\La$ is the partially
ordered set associated to a Grassmann variety of type A or a maximal
orthogonal Grassmannian, the constants $c^\nu_{\la,\mu}$ are the
$K$-theoretic Schubert structure constants for this space, and $U$ is
a superstandard tableau.  Corollary~\ref{cor:lrrule} was proved in
\cite{thomas.yong:jeu} for Grassmannians of type A and in
\cite{buch.ravikumar:pieri, clifford.thomas.ea:k-theoretic} for
maximal orthogonal Grassmannians.  Corollary~\ref{cor:lrUT0} was
obtained in \cite{thomas.yong:direct}.

\begin{cor}\label{cor:lrrule}
  Let $\la$, $\mu$, and $\nu$ be straight shapes, and let $U$ be any
  unique rectification target of shape $\mu$.  Then $c^\nu_{\la,\mu}$
  is equal to the number of increasing tableaux of shape $\nu/\la$
  that rectify to $U$.
\end{cor}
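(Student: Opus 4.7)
The plan is to apply Proposition~\ref{prop:assoc} to the jeu de taquin class $\tau = [U]$. First I would verify the hypotheses: $[U]$ is closed under jeu de taquin slides by definition, and it is locally finite because no individual swap $\swap_{s,s'}$ introduces integer entries not already present, so every tableau in $[U]$ has entries in the finite range spanned by the entries of $U$.

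The key observation is that $F_{[U]}(1) = G_\mu$. Unraveling the definition gives
\[
F_{[U]}(1) \;=\; \sum_{\nu} [U](\nu)\, G_\nu,
\]
where the sum is over straight shapes $\nu$ and $[U](\nu)$ counts tableaux of straight shape $\nu$ in $[U]$. Since $U$ is a unique rectification target of shape $\mu$, Lemma~\ref{lem:urt_class} forces $U$ to be the only tableau of straight shape in $[U]$, yielding $F_{[U]}(1) = G_\mu$.

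Next I would identify the coefficient of $G_\nu$ in $F_{[U]}(G_\la)$, which by definition is $[U](\nu/\la)$, with the number of tableaux of shape $\nu/\la$ that rectify to $U$. A skew tableau $T$ lies in $[U]$ if and only if $T$ rectifies to $U$: the ``if'' direction is immediate because a rectification is obtained through forward slides, while for the ``only if'' direction, any rectification $R$ of $T$ satisfies $R \in [T] = [U]$ and has straight shape, so $R = U$ by Lemma~\ref{lem:urt_class}.

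Putting these ingredients together, Proposition~\ref{prop:assoc} gives
\[
F_{[U]}(G_\la) \;=\; G_\la \cdot F_{[U]}(1) \;=\; G_\la \cdot G_\mu \;=\; \sum_\nu c^\nu_{\la,\mu}\, G_\nu,
\]
and comparing coefficients of $G_\nu$ with the combinatorial expression above yields the desired equality. The argument is essentially formal: once the commutative ring structure on $\Gamma(\La)$ established in Proposition~\ref{prop:assoc} is in hand, there is no real obstacle beyond the two elementary properties of $[U]$ extracted from Lemma~\ref{lem:urt_class}.
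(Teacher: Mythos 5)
Your proof is correct and follows essentially the same route as the paper: the paper's one-line proof is precisely the identity $G_\la \cdot G_\mu = G_\la \cdot F_{[U]}(1) = F_{[U]}(G_\la)$, relying on Proposition~\ref{prop:assoc} and on Lemma~\ref{lem:urt_class} to identify $F_{[U]}(1)$ with $G_\mu$. You have merely spelled out the supporting details (local finiteness of $[U]$ and the equivalence between lying in $[U]$ and rectifying to $U$) that the paper leaves implicit.
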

\begin{proof}
  This is true because $G_\la \cdot G_\mu = G_\la \cdot F_{[U]}(1) =
  F_{[U]}(G_\la)$.
\end{proof}

\begin{cor}\label{cor:lrUT0}
  Let $\la$, $\mu$, and $\nu$ be straight shapes, let $T_0$ be any
  increasing tableau of shape $\mu$, and let $U$ be any unique
  rectification target of shape $\la$.  Then $c^\nu_{\la,\mu}$ is the
  number of increasing tableaux $T$ of shape $\nu/\la$ for which
  $\jdt_U(T) = T_0$.
\end{cor}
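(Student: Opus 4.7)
The plan is to use the infusion involution from Proposition~\ref{prop:kinfusion} to convert the problem into a counting problem that has already been solved in Corollary~\ref{cor:lrrule}. Specifically, I will set up a bijection
\[
\{\,T : \sh(T) = \nu/\la,\ \jdt_U(T) = T_0 \,\} \;\longleftrightarrow\; \{\,S' : \sh(S') = \nu/\mu,\ S' \text{ rectifies to } U\,\},
\]
and then apply Corollary~\ref{cor:lrrule} (with the URT $U$ of shape $\la$) together with commutativity of $\Gamma(\La)$ to identify the right-hand cardinality with $c^\nu_{\la,\mu}$.

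To construct the bijection, I take any $T$ with $\sh(T) = \nu/\la$ and form the infusion pair $(T_0', S') := (\jdt_U(T), \wh\jdt_T(U))$, so $\sh(T_0')$ is some straight shape $\mu' \subset \nu$ and $\sh(S') = \nu/\mu'$. By Proposition~\ref{prop:kinfusion} the operation is an involution, so $\jdt_{T_0'}(S') = U$ and $\wh\jdt_{S'}(T_0') = T$. Restricting to tableaux $T$ with $\jdt_U(T) = T_0$ forces $\mu' = \mu$, and we obtain a bijection from such $T$ to the set of increasing tableaux $S'$ of shape $\nu/\mu$ satisfying $\jdt_{T_0}(S') = U$.

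The key observation is that, because $U$ is a unique rectification target, the condition $\jdt_{T_0}(S') = U$ is equivalent to the weaker condition that $S'$ merely rectifies to $U$. Indeed, $\jdt_{T_0}(S')$ is by Proposition~\ref{prop:kinfusion} one particular rectification of $S'$, so $\jdt_{T_0}(S') = U$ immediately implies that $U$ is a rectification of $S'$. Conversely, if $U$ is any rectification of $S'$, then by the URT hypothesis $U$ is the \emph{only} rectification of $S'$, hence in particular $\jdt_{T_0}(S') = U$. Thus the bijection above is really a bijection onto the set of increasing tableaux of shape $\nu/\mu$ that rectify to $U$.

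By Corollary~\ref{cor:lrrule} applied to the URT $U$ of shape $\la$, the number of such $S'$ equals $c^\nu_{\mu,\la}$. Since $\Gamma(\La)$ is commutative by Proposition~\ref{prop:assoc}, $c^\nu_{\mu,\la} = c^\nu_{\la,\mu}$, which completes the count. The one genuinely delicate step is the equivalence between $\jdt_{T_0}(S')=U$ and $S'$ rectifying to $U$; this is where the URT hypothesis on $U$ is essential, and it is the only place in the argument where any nontrivial combinatorial input beyond the infusion involution is used.
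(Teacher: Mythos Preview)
Your proof is correct and follows essentially the same route as the paper: both use the infusion involution of Proposition~\ref{prop:kinfusion} to send $T \mapsto \wh\jdt_T(U)$, producing a bijection onto the set of increasing tableaux of shape $\nu/\mu$ that rectify to $U$, whose cardinality is $c^\nu_{\la,\mu}$ by Corollary~\ref{cor:lrrule} (with $\la$ and $\mu$ interchanged) and commutativity. You have simply made explicit the step---namely, that the URT hypothesis on $U$ is what upgrades ``$\jdt_{T_0}(S')=U$'' to ``$S'$ rectifies to $U$''---which the paper leaves implicit.
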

\begin{proof}
  It follows from Proposition~\ref{prop:kinfusion} that the map
  $T \mapsto \wh\jdt_T(U)$ is a bijection from the set of tableaux
  $T$ of shape $\nu/\la$ for which $\jdt_U(T)=T_0$ to the set of
  tableaux of shape $\nu/\mu$ that rectify to $U$.
\end{proof}

\begin{example}\label{ex:e8_fails}
  Let $X = E_7/P_7$ be the Freudenthal variety and consider the shapes
  $\la = (5,1)$, $\mu = (5,3,3)$, and $\nu = (5,5,5,2,1,1)$ in
  $\La_X$.  Then $c^\nu_{\la,\mu}(\La_X) = 11$.  There are 12
  increasing tableaux of shape $\nu/\la$ that have $S_\mu$ as a
  rectification, and 10 of these tableaux have $S_\mu$ as the only
  rectification.  Similarly, there are 12 increasing tableaux of shape
  $\nu/\la$ that have $\wh S_\mu$ as a rectification, and 10 of these
  tableaux have $\wh S_\mu$ as the only rectification.
\end{example}

\begin{remark}
  Proposition~\ref{prop:assoc} would be true with the weaker
  hypothesis that for each straight shape $\la \subset \La$ there
  exists {\em some\/} unique rectification target $U_\la$ of shape
  $\la$, and we set $F_\la = F_{[U_\la]}$.  However, all partially
  ordered sets $\La$ with this property that we know about are unique
  rectification posets.
\end{remark}

\begin{remark}
  Let $\La' \subset \La$ be any lower order ideal and $T$ an
  increasing tableau of straight shape $\sh(T) \subset \La'$.  If $T$
  is a unique rectification target for $\La$ then $T$ is also a unique
  rectification target for $\La'$.  In particular, if $\La$ is a
  unique rectification poset, then so is $\La'$.
\end{remark}

\begin{remark}
  Proctor has defined a notion of {\em $d$-complete posets\/} that
  generalize the partially ordered sets associated to minuscule
  varieties.  He proves in \cite{proctor:d-complete} that every
  $d$-complete poset has the {\em jeu de taquin property}, i.e.\ the
  rectification of any standard skew tableau with Sch\"utzenberger's
  cohomological jeu de taquin algorithm is independent of choices.  It
  would be interesting to know if all $d$-complete posets are also
  unique rectification posets.  We have checked that this is the case
  for a few $d$-complete posets that are not associated to minuscule
  varieties.
\end{remark}


\section{$K$-theory of minuscule varieties}\label{sec:ktheory}

\subsection{The Grothendieck ring}

In this section we let $X = G/P$ be a minuscule variety and $\La_X$
the associated partially ordered set (see
Remark~\ref{rmk:coroot_poset}).  Theorem~\ref{thm:mininc_urt} and
Proposition~\ref{prop:assoc} imply that the combinatorial $K$-theory
ring $\Gamma(\La_X)$ is a well defined associative ring.  Let $K(X)$
be the Grothendieck ring of algebraic vector bundles on $X$.  A brief
description of this ring can be found in \cite[\S
2]{buch.ravikumar:pieri}.  In this section we first show that the ring
$K(X)$ is isomorphic to $\Gamma(\La_X)$.  We then use the geometry of
$X$ to prove results about about increasing tableaux for $\La_X$.

For each straight shape $\la \subset \La_X$ we let $\cO_\la :=
[\cO_{X_\la}] \in K(X)$ denote the Grothendieck class of the Schubert
variety $X_\la$.  The next result shows that the $K$-theoretic
Schubert structure constants of $X$ are the integers
$(-1)^{|\nu|-|\la|-|\mu|}\,c^\nu_{\la,\mu}(\La_X)$, i.e.\ the
identity $\cO_\la \cdot \cO_\mu = \sum_\nu (-1)^{|\nu|-|\la|-|\mu|} \,
c^\nu_{\la,\mu}(\La_X) \, \cO_\nu$ holds in $K(X)$.

\begin{thm}\label{thm:iso_groth}
  Let $X$ be a minuscule variety.  Then the $\Z$-linear isomorphism
  $\varphi : \Gamma(\La_X) \to K(X)$ defined by $G_\la \mapsto
  (-1)^{|\la|}\, \cO_{X_\la}$ is an isomorphism of rings.
\end{thm}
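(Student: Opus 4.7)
Since $\varphi$ sends the $\Z$-basis $\{G_\la\}$ of $\Gamma(\La_X)$ bijectively onto the signed basis $\{(-1)^{|\la|}\cO_\la\}$ of $K(X)$ and satisfies $\varphi(G_\emptyset) = \cO_X = 1$, it is a unit-preserving $\Z$-module isomorphism; the content of the theorem is the multiplicativity of $\varphi$. Equivalently, one must verify the identity
\[
\cO_\la \cdot \cO_\mu \;=\; \sum_\nu (-1)^{|\nu|-|\la|-|\mu|}\, c^\nu_{\la,\mu}(\La_X)\, \cO_\nu
\]
in $K(X)$, where by Theorem \ref{thm:mininc_urt} and Corollary \ref{cor:lrrule} the coefficient $c^\nu_{\la,\mu}(\La_X)$ counts increasing tableaux of shape $\nu/\mu$ that rectify to $M_\la$.

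My plan is a Pieri-style reduction. First, for each minuscule variety $X$, I would fix a family $\Pi_X$ of ``Pieri shapes'' such that an explicit $K$-theoretic Pieri formula for $\cO_{\la_0}\cdot \cO_\mu$ is available for every $\la_0 \in \Pi_X$, and such that $\{\cO_{\la_0} : \la_0 \in \Pi_X\}$ generates $K(X)$ as a $\Z$-algebra. For Grassmannians of type A, take $\Pi_X$ to consist of the one-row shapes and appeal to Lenart's formula \cite{lenart:combinatorial}. For maximal orthogonal Grassmannians, apply the Buch--Ravikumar rule \cite{buch.ravikumar:pieri}. Quadric hypersurfaces admit a direct elementary computation. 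For the Cayley plane and the Freudenthal variety, the shape posets $\La_{E_6/P_6}$ and $\La_{E_7/P_7}$ are small enough that the required Pieri products can be verified by a finite computer calculation.

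Second, I would translate each cited Pieri formula into the rectification language of this paper using Corollary \ref{cor:lrrule} applied with $U = M_{\la_0}$: the known Pieri coefficient must equal $(-1)^{|\nu|-|\la_0|-|\mu|}$ times the number of increasing tableaux of shape $\nu/\mu$ rectifying to $M_{\la_0}$. In each classical case this reduces to exhibiting a sign-preserving bijection between the combinatorial objects appearing in the published Pieri rule and increasing tableaux rectifying to $M_{\la_0}$.

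Third, I would propagate agreement from Pieri generators to all products. The family $\Pi_X$ also generates $\Gamma(\La_X)$ as a $\Z$-algebra via an upper-triangular induction on codimension: expanding $G_{\la_0} \cdot G_\mu$ in the basis $\{G_\nu\}$ yields leading contributions with $|\nu| = |\la_0| + |\mu|$ together with strictly higher-codimension corrections, so each $G_\nu$ is inductively a $\Z$-polynomial in the $G_{\la_0}$. Associativity and commutativity in both rings then upgrade multiplicativity of $\varphi$ on Pieri generators to multiplicativity on arbitrary products. The principal obstacle is step one for the exceptional cases, where no classical Pieri formula is available and one must rely on a finite computer enumeration tailored to $\La_{E_6/P_6}$ and $\La_{E_7/P_7}$; a secondary obstacle is the bijective translation of Lenart's and Buch--Ravikumar's rules into counts of tableaux rectifying to $M_{\la_0}$.
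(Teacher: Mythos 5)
Your overall architecture --- reduce to Pieri-type products, match the known geometric Pieri rules against Corollary~\ref{cor:lrrule}, and propagate by generation --- is exactly the paper's strategy for the classical cases: for $\Gr(m,n)$ and $\OG(n,2n)$ the paper uses the one-row shapes, Lenart's rule, the Buch--Ravikumar rule, and the combinatorial Pieri formulas (Corollaries~\ref{cor:pieri_A} and \ref{cor:pieri_B}), and your ``sign-preserving bijection'' is precisely what those corollaries supply. Where you diverge, and where your plan is underspecified, is the exceptional cases (and, to a lesser extent, even quadrics). Your upper-triangular induction requires that the leading cohomological terms of your generators already generate $H^*(X;\Q)$ as a ring; but for $E_6/P_6$ the Betti numbers jump from $1$ to $2$ at degree $4$, so the divisor class --- the only class with a published Chevalley/Pieri formula --- cannot generate, and you must adjoin further classes (the paper uses $(4)$ and $(4,4)$ for the Cayley plane, $(5)$ and $(5,4)$ for $E_7/P_7$, and $(n)$ for $Q^{2n}$). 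Under your scheme each adjoined generator $\la_0$ would then need condition ``$\varphi(G_{\la_0}\cdot G_\mu)=\varphi(G_{\la_0})\varphi(G_\mu)$ for \emph{all} $\mu$,'' i.e.\ a full Pieri-type expansion of $\cO_{(4)}\cdot\cO_\mu$ in $K(E_6/P_6)$ for every $\mu$, which exists nowhere in the literature and is a substantial (though finite) equivariant-localization computation. The paper avoids this with a genuinely different reduction: it only demands that $1,[X_{\nu_1}],\dots,[X_{\nu_\ell}]$ \emph{module}-generate $H^*(X;\Q)$ over the subring generated by the divisor class (verified via the Hard Lefschetz theorem), transfers this to $K(X)_\Q$ using that $H^*(X;\Q)$ is the associated graded of $K(X)_\Q$ for the gamma filtration, and then only has to check the handful of products $\cO_{\nu_i}\cdot\cO_{\nu_j}$ \emph{among the special classes themselves} (three products for each exceptional variety, computed by restriction to torus fixed points). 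So your proposal is not wrong, but at its acknowledged ``principal obstacle'' it would commit you to a far heavier verification than the paper's; the missing idea is the passage from ring generation to module generation via the gamma filtration and Hard Lefschetz.
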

\begin{proof}
  It is enough to show that there exist straight shapes
  $\mu_1,\dots,\mu_k$ and $\nu_1,\dots,\nu_\ell$ in $\La_X$ such that
  the following conditions hold.\smallskip

  \noin (a) \ $\varphi(G_{\mu_j} \cdot G_\la) = \varphi(G_{\mu_j})
  \cdot \varphi(G_\la)$ for every straight shape $\la \subset \La_X$
  and $1 \leq j \leq k$.\smallskip

  \noin (b) \ $\varphi(G_{\nu_i} \cdot G_{\nu_j}) = \varphi(G_{\nu_i})
  \cdot \varphi(G_{\nu_j})$ for all $1 \leq i \leq j \leq
  \ell$.\smallskip

  \noin (c) \ The cohomology ring $H^*(X;\Q)$ is spanned by the
  classes $1, [X_{\nu_1}],\dots,[X_{\nu_\ell}]$ as a module over the
  subring of $H^*(X;\Q)$ generated by $[X_{\mu_1}], \dots,
  [X_{\mu_k}]$.\smallskip
  
  To see that these conditions are sufficient, notice first that since
  the cohomology ring $H^*(X;\Q)$ is the associated graded ring of
  $K(X)_\Q := K(X) \otimes_\Z \Q$ modulo Grothendieck's gamma
  filtration (see examples 15.1.5 and 15.2.16 in
  \cite{fulton:intersection}), condition (c) implies that $K(X)_\Q$ is
  spanned by $\cO_{\nu_0},\dots,\cO_{\nu_\ell}$ as a module over the
  subring $R = \Q[\cO_{\mu_1},\dots,\cO_{\mu_k}] \subset K(X)_\Q$,
  where we set $\nu_0 = \emptyset$ so that $\cO_{\nu_0}=1$.  Set $S =
  \Q[G_{\mu_1},\dots,G_{\mu_k}] \subset \Gamma(\La_X)_\Q$.  Then (a)
  implies that $\varphi : S \to R$ is an isomorphism of rings, and
  that $\varphi(s \cdot f) = \varphi(s) \cdot \varphi(f)$ for all $s
  \in S$ and $f \in \Gamma(\La_X)_\Q$.  It follows that
  $\Gamma(\La_X)_\Q$ is spanned by $G_{\nu_0},\dots,G_{\nu_\ell}$ as
  an $S$-module.  Finally, given $f, g \in \Gamma(\La_X)_\Q$ we may
  write $f = \sum a_i G_{\nu_i}$ and $g = \sum b_j G_{\nu_j}$ with
  $a_i, b_j \in S$.  Using (b) we obtain
  \[
  \varphi(f g) 
  = \sum_{i,j} \varphi(a_i) \varphi(b_j) \varphi(G_{\nu_i} G_{\nu_j}) 
  = \sum_{i,j} \varphi(a_i) \varphi(b_j) \varphi(G_{\nu_i}) \varphi(G_{\nu_j})
  = \varphi(f) \varphi(g) \,,
  \]
  as required.
  
  Assume first that $X = \Gr(m,n)$ is a Grassmannian of type A.  In
  this case we can use the shapes $\mu_i = (i)$ for $1 \leq i \leq
  n-m$.  Since the special Schubert classes $[X_{\mu_i}]$ generate
  $H^*(X;\Q)$, no shapes $\nu_j$ are required for property (c).
  Property (a) follows by comparing Lenart's Pieri rule
  \cite[Thm.~3.4]{lenart:combinatorial} for products of the form
  $\cO_{(i)} \cdot \cO_\la$ in $K(X)$ to the corresponding Pieri rule
  for $\Gamma(\La_X)$ given in Corollary~\ref{cor:pieri_A}.

  Assume next that $X = \OG(n,2n)$ is a maximal orthogonal
  Grassmannian.  We use $\mu_i = (i)$ for $1 \leq i \leq n-1$; no
  shapes $\nu_j$ are required.  Property (a) follows by comparing the
  $K$-theoretic Pieri rule for $K(X)$ proved in
  \cite[Cor.~4.8]{buch.ravikumar:pieri} to
  Corollary~\ref{cor:pieri_B}.

  Assume that $X = E_6/P_6$ is the Cayley Plane.  Here we use $\mu_1 =
  (1)$, $\nu_1 = (4)$, and $\nu_2 = (4,4)$.  Property (a) can be
  verified using Lenart and Postnikov's $K$-theoretic Chevalley
  formula \cite{lenart.postnikov:affine}.  The Betti numbers $d_i :=
  \dim H^{2i}(X;\Q)$ of $X$ are given by $d_i=1$ for $0 \leq i \leq
  3$, $d_i=2$ for $4 \leq i \leq 7$, $d_8=3$, and $d_{16-i}=d_i$ for
  $0 \leq i \leq 16$.  Using the Chevalley formula, one can check that
  $[X_{(4)}] \notin [X_{(1)}] \cdot H^6(X;\Q)$ and that $[X_{(4,4)}]
  \notin [X_{(1)}] \cdot H^{14}(X;\Q)$.  Property (c) therefore
  follows from the Hard Lefschetz theorem.  We have computed the
  products $\cO_{\nu_i} \cdot \cO_{\nu_j}$ in the equivariant
  $K$-theory ring $K_T(X)$, where $T \subset G$ is a maximal torus.
  This was done by utilizing that each such product is uniquely
  determined by its restrictions to the $T$-fixed points of $X$
  \cite{kostant.kumar:t-equivariant}.  Formulas for the restrictions
  of Schubert classes to $T$-fixed points can be found in
  \cite{graham:equivariant, willems:k-theorie, knutson:schubert}.  The
  products $\cO_{\nu_i} \cdot \cO_{\nu_j}$ in the ordinary $K$-theory
  ring are given by
  \[
  \begin{split}
    \cO_{(4)} \cdot \cO_{(4)} &\ = \ \cO_{(4,4)} + \cO_{(4,3,1)} +
    \cO_{(4,2,2)} - \cO_{(4,4,1)} - \cO_{(4,3,2)}
    \ , \\
    \cO_{(4)} \cdot \cO_{(4,4)} &\ = \ \cO_{(4,4,4)} \ , \\
    \cO_{(4,4)} \cdot \cO_{(4,4)} &\ = \ \cO_{(4,4,4,4)} \,.
  \end{split}
  \]
  Property (b) follows from this by observing that the following seven
  tableaux are the only increasing tableaux that contribute to these
  products.\medskip

  \noin \ \ 
  $\tableau{10}{{}&{}&{}&{}\\ &&{1}&{2}&{3}&{4}\\ 
    &&{}&{}&{}&{}\\ &&&&{}&{}&{}&{}}$ \ \ 
  $\tableau{10}{{}&{}&{}&{}\\ &&{1}&{2}&{4}&{}\\ 
    &&{3}&{}&{}&{}\\ &&&&{}&{}&{}&{}}$ \ \ 
  $\tableau{10}{{}&{}&{}&{}\\ &&{1}&{2}&{}&{}\\ 
    &&{3}&{4}&{}&{}\\ &&&&{}&{}&{}&{}}$ \ \ 
  $\tableau{10}{{}&{}&{}&{}\\ &&{1}&{2}&{3}&{4}\\ 
    &&{3}&{}&{}&{}\\ &&&&{}&{}&{}&{}}$
  \medskip
  
  \noin \ \ 
  $\tableau{10}{{}&{}&{}&{}\\ &&{1}&{2}&{4}&{}\\ 
    &&{3}&{4}&{}&{}\\ &&&&{}&{}&{}&{}}$ \ \ 
  $\tableau{10}{{}&{}&{}&{}\\ &&{}&{}&{}&{}\\ 
    &&{1}&{2}&{3}&{4}\\ &&&&{}&{}&{}&{}}$ \ \ 
  $\tableau{10}{{}&{}&{}&{}\\ &&{}&{}&{}&{}\\ 
    &&{1}&{2}&{3}&{4}\\ &&&&{4}&{5}&{6}&{7}}$
  \medskip

  If $X = E_7/P_7$ is the Freudenthal variety, then the proof is
  analogous to the argument given for the Cayley plane.  We use $\mu_1
  = (1)$, $\nu_1=(5)$, and $\nu_2 = (5,4)$.  The Betti numbers $d_i :=
  \dim H^{2i}(X;\Q)$ are given by $d_i = 1$ for $0 \leq i \leq 4$,
  $d_i = 2$ for $5 \leq i \leq 8$, $d_i = 3$ for $9 \leq i \leq 18$,
  and $d_{27-i} = d_i$ for $0 \leq i \leq 27$.  The products
  $\cO_{\nu_i} \cdot \cO_{\nu_j}$ can be computed with equivariant
  methods and are given by
  \[
  \begin{split}
    \cO_{(5)} \cdot \cO_{(5)} &\ = \ 2\,\cO_{(5,4,1)} + 2\,\cO_{(5,3,2)} -
    3\,\cO_{(5,4,2)} - \cO_{(5,3,3)}
    + \cO_{(5,4,3)} \ , \\
    \cO_{(5)}\cdot \cO_{(5,4)} &\ = \ 2\,\cO_{(5,5,4)} +
    2\,\cO_{(5,5,3,1)} + \cO_{(5,4,4,1)}
    - 4\,\cO_{(5,5,4,1)} \ , \\
    \cO_{(5,4)} \cdot \cO_{(5,4)} &\ = \ 2\,\cO_{(5,5,5,2,1)} +
    2\,\cO_{(5,5,4,2,1,1)} - 3\,\cO_{(5,5,5,2,1,1)} \,.
  \end{split}
  \]
  We leave it as an exercise to identify the corresponding 25
  increasing tableaux.
  
  Finally, assume that $X = Q^{2n} \subset \bP^{2n+1}$ is an even
  quadric hypersurface.  Fix the orthogonal form on $\C^{2n+2}$
  defined by $(e_i,e_j) = \delta_{i+j,2n+3}$.  Then $X = \{ L \in
  \bP^{2n+1} \mid (L,L) = 0 \}$.  For $0 \leq p \leq 2n$ we let $X_p
  \subset X$ be the subvariety defined by
  \[
  X_p = \begin{cases}
    \bP^{2n+1-p} \cap X & \text{if $0 \leq p < n$,}\\
    \bP^{2n-p} & \text{if $n \leq p \leq 2n$,}
  \end{cases}
  \]
  where $\bP^m \subset \bP^{2n-1}$ denotes the linear subspace spanned
  by $e_1,\dots,e_{m+1}$.  Let $X'_n \subset \bP^{2n+1}$ be the linear
  subspace spanned by $e_1,\dots,e_n,e_{n+2}$.  Then $X_p \subset X$
  is the unique Schubert variety of codimension $p$ when $p \neq n$,
  while $X_n$ and $X'_n$ are the Schubert varieties of codimension
  $n$.  We claim that multiplication with the divisor class
  $[\cO_{X_1}]$ in $K(X)$ is determined by $[\cO_{X_1}] \cdot
  [\cO_{X'_n}] = [\cO_{X_{n+1}}]$ and
  \begin{equation}\label{eqn:quadchev}
    [\cO_{X_1}] \cdot [\cO_{X_p}] = \begin{cases}
      [\cO_{X_{p+1}}] & \text{if $p \notin \{n-1,2n\}$;} \\
      [\cO_{X_n}] + [\cO_{X'_n}] - [\cO_{X_{n+1}}] &\text{if $p=n-1$;}\\
      0 & \text{if $p=2n$.}
    \end{cases}
  \end{equation}
  To see this, consider the linear map $i_* : K(X) \to K(\bP^{2n+1})$
  defined by the inclusion $i : X \subset \bP^{2n+1}$.  If we let $h =
  [\cO_H] \in K(\bP^{2n+1})$ be the class of a hyperplane, then we
  have $i_*[\cO_{X_p}] = h^p(2h-h^2)$ for $0 \leq p < n$ and
  $i_*[\cO_{X_p}] = h^{p+1}$ for $n \leq p \leq 2n$.  Since the
  projection formula implies that $i_*([\cO_{X_1}] \cdot [\cO_{X_p}])
  = h \cdot i_*[\cO_{X_p}]$, it follows that we obtain a valid
  identity in $K(\bP^{2n+1})$ after applying $i_*$ to both sides of
  (\ref{eqn:quadchev}).  The claim now follows because $\Ker(i_*) =
  \Z\,([\cO_{X_n}] - [\cO_{X'_n}])$ and both sides of
  (\ref{eqn:quadchev}) are invariant under the involution of $K(X)$
  that interchanges $[\cO_{X_n}]$ and $[\cO_{X'_n}]$.
  
  We also claim that
  \begin{equation}\label{eqn:quadop}
    [\cO_{X_n}] \cdot [\cO_{X_n}] = \begin{cases}
      [\cO_{X_{2n}}] = [\cO_{\text{point}}] & \text{if $n$ is even;}\\
      0 & \text{if $n$ is odd.}
    \end{cases}
  \end{equation}
  This follows because a variety $\bP(E) \subset \bP^{2n+1}$ is a
  translate of $X_n$ under the action of $G = \SO(2n+2)$ if and only
  if $E \subset \C^{2n+2}$ is a linear subspace of dimension $n+1$
  such that $\dim(E \cap \C^{n+1}) \equiv n+1$ (mod 2).  Since the
  opposite Schubert variety $X_n^\op$ is a $G$-translate of $X_n$, we
  obtain
  \[
  X_n^\op = \begin{cases}
    \bP(\Span\{e_{n+2},\dots,e_{2n+2}\}) & \text{if $n$ is odd,} \\
    \bP(\Span\{e_{n+1},e_{n+3},\dots,e_{2n+2}\}) & \text{if $n$ is
      even,}
  \end{cases}
  \]
  so $X_n \cap X_n^\op = \{ \C\, e_{n+1} \}$ is a single point when
  $n$ is even, while $X_n \cap X_n^\op = \emptyset$ when $n$ is odd.
  We leave it to the reader to check that properties (a), (b), and (c)
  for the shapes $\mu_1 = (1)$ and $\nu_1 = (n)$ follow from
  (\ref{eqn:quadchev}) and (\ref{eqn:quadop}).
\end{proof}

\subsection{Duality}

The sheaf Euler characteristic map $\euler{X} : K(X) \to \Z$ is the
$\Z$-linear map defined by $\euler{X}(\cO_\la) = 1$ for all straight
shapes $\la$.  Since non-empty Richardson varieties are rational
\cite{richardson:intersections} and have rational singularities
\cite{brion:positivity}, we deduce from Corollary~\ref{cor:poincare}
that we have
\begin{equation}\label{eqn:richardson}
  \euler{X}(\cO_\la \cdot \cO_\mu) = 
  \begin{cases}
    1 & \text{if $\la \subset \mu^\vee$;}\\
    0 & \text{otherwise,}
  \end{cases}
\end{equation}
for all pairs of straight shapes $\la$ and $\mu$ (see \cite[\S
2]{buch.ravikumar:pieri} for more details).

A subset $\theta \subset \La_X$ is called a {\em rook strip\/} if no
two boxes of $\theta$ are comparable by the partial order $\leq$.
Equivalently, $\theta$ is a skew shape such that no two boxes of
$\theta$ belong to the same row or column when $\theta$ is identified
with a subset of $\N^2$.  Define the {\em dual Grothendieck class\/}
of $X_\la$ by
\begin{equation}\label{eqn:dualclass}
  \cO^*_\la \ = \ \sum_{\nu/\la^\vee \text{ rook strip}}
  (-1)^{|\nu/\la^\vee|}\, \cO_\nu \, \in K(X)
\end{equation}
where the sum is over all straight shapes $\nu$ containing $\la^\vee$
such that $\nu/\la^\vee$ is a rook strip.  The identity
(\ref{eqn:richardson}) implies that
\begin{equation}\label{eqn:dualpair}
  \euler{X}(\cO_\la \cdot \cO^*_\mu) = \delta_{\la,\mu} \,.
\end{equation}

We remark that the identities (\ref{eqn:richardson}),
(\ref{eqn:dualclass}), (\ref{eqn:dualpair}) are valid on any
cominuscule homogeneous space.  However, the following result fails
for the the Lagrangian Grassmannian $\LG(n,2n)$ by
\cite[Example~5.7]{buch.ravikumar:pieri}.  Set $c^\nu_{\la,\mu} =
c^\nu_{\la,\mu}(\La_X)$.

\begin{thm}\label{thm:symmetry}
  Let $X$ be a minuscule variety and let $\la$, $\mu$, and $\nu$ be
  straight shapes.  Then we have $\cO_\la^* = (1 - \cO_{(1)}) \cdot
  \cO_{\la^\vee}$ and $c^\nu_{\la,\mu} = c^{\mu^\vee}_{\la,\nu^\vee}$.
\end{thm}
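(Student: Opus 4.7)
The plan is to first establish the identity $\cO_\la^* = (1 - \cO_{(1)}) \cdot \cO_{\la^\vee}$, and then derive the symmetry $c^\nu_{\la,\mu} = c^{\mu^\vee}_{\la,\nu^\vee}$ from it via the Euler pairing (\ref{eqn:dualpair}).

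The key step in proving the first identity is the $K$-theoretic Chevalley formula
\[
\cO_{(1)} \cdot \cO_\mu \ = \ \sum_{\nu/\mu \text{ nonempty rook strip}} (-1)^{|\nu/\mu|-1}\, \cO_\nu
\]
in $K(X)$. Once this is in hand, expanding $(1-\cO_{(1)}) \cdot \cO_{\la^\vee} = \cO_{\la^\vee} + \sum_{\nu/\la^\vee \text{ nonempty rook strip}} (-1)^{|\nu/\la^\vee|}\, \cO_\nu$ matches the definition of $\cO_\la^*$ in (\ref{eqn:dualclass}). To prove the Chevalley formula, I translate across the ring isomorphism $\varphi$ of Theorem~\ref{thm:iso_groth}: it is equivalent to the claim that the jeu de taquin class $[M_{(1)}]$ consists of exactly the increasing tableaux of nonempty rook-strip shape whose entries are all equal to $1$. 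In one direction, the set of values occurring in an increasing tableau is invariant under jeu de taquin slides (an inspection of $\swap_{a,\bullet}$ shows that the value $a$ cannot be eliminated entirely), so every tableau in $[M_{(1)}]$ has all entries equal to $1$, forcing its shape to be an antichain. For the converse, apply forward slides to any rook-strip tableau $T$ filled with $1$'s until it reaches straight shape. The resulting rectification is again an antichain filled with $1$'s; since $\La_X$ has a unique minimal box in every minuscule type, the only such straight-shape tableau is $M_{(1)}$, so $T \in [M_{(1)}]$ by Theorem~\ref{thm:mininc_urt} and Lemma~\ref{lem:urt_class}.

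For the symmetry of structure constants, the Euler pairing (\ref{eqn:dualpair}) gives
\[
(-1)^{|\nu|-|\la|-|\mu|}\, c^\nu_{\la,\mu} \ = \ \euler{X}(\cO_\la \cdot \cO_\mu \cdot \cO^*_\nu).
\]
Substituting $\cO^*_\nu = (1-\cO_{(1)}) \cdot \cO_{\nu^\vee}$ and using the commutativity of $K(X)$,
\[
\cO_\la \cdot \cO_\mu \cdot \cO^*_\nu \ = \ \cO_\la \cdot \cO_{\nu^\vee} \cdot (1-\cO_{(1)}) \cdot \cO_\mu \ = \ \cO_\la \cdot \cO_{\nu^\vee} \cdot \cO^*_{\mu^\vee},
\]
where the last step applies the already-proved first identity to $\mu^\vee$ (using $(\mu^\vee)^\vee = \mu$). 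Taking Euler characteristics and using (\ref{eqn:dualpair}) a second time yields $(-1)^{|\nu|-|\la|-|\mu|}\, c^\nu_{\la,\mu} = (-1)^{|\mu^\vee|-|\la|-|\nu^\vee|}\, c^{\mu^\vee}_{\la,\nu^\vee}$; since $|\mu^\vee| = |\La_X|-|\mu|$ and $|\nu^\vee| = |\La_X|-|\nu|$, the two exponents are exactly equal and the signs agree. The main obstacle is the combinatorial identification of $[M_{(1)}]$, which rests on the invariance of the entry set under $K$-theoretic slides together with the unique-minimum property of $\La_X$ for minuscule $X$ and Theorem~\ref{thm:mininc_urt}.
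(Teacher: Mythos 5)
Your proof is correct and follows essentially the same route as the paper: the paper likewise deduces $\cO_\la^* = (1-\cO_{(1)})\cdot\cO_{\la^\vee}$ from Theorem~\ref{thm:iso_groth} and then obtains the symmetry by writing $(-1)^{|\nu|-|\la|-|\mu|}c^\nu_{\la,\mu} = \euler{X}((1-\cO_{(1)})\cdot\cO_\la\cdot\cO_\mu\cdot\cO_{\nu^\vee})$ and exploiting the visible symmetry of the right-hand side. Your explicit identification of the jeu de taquin class $[M_{(1)}]$ with all-$1$ fillings of nonempty rook strips (via invariance of the value set under slides and the unique minimal box of $\La_X$) correctly fills in the step the paper leaves implicit.
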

\begin{proof}
  The identity $\cO_\la^* = (1 - \cO_{(1)}) \cdot \cO_{\la^\vee}$
  follows from Theorem~\ref{thm:iso_groth}.  We obtain
  \[
  (-1)^{|\nu|-|\la|-|\mu|}\, c^\nu_{\la,\mu} =
  \euler{X}(\cO_\la \cdot \cO_\mu \cdot \cO_\nu^*) =
  \euler{X}((1-\cO_{(1)}) \cdot \cO_\la \cdot \cO_\mu \cdot
  \cO_{\nu^\vee}) \,,
  \]
  which implies that $c^\nu_{\la,\mu} = c^{\mu^\vee}_{\la,\nu^\vee}$.
\end{proof}

Theorem~\ref{thm:symmetry} implies that the constants $c(\la,\mu,\nu)
= c^{\nu^\vee}_{\la,\mu}$ are invariant under arbitrary permutations
of $\la$, $\mu$, and $\nu$.  This was known for Grassmannians of type
A \cite[Cor.~1]{buch:combinatorial} and for maximal orthogonal
Grassmannians \cite[Cor.~4.6]{buch.ravikumar:pieri}.

\subsection{Unique rectification targets}

We will say that a subset of $\La_X$ is an {\em anti-straight shape\/}
if it is an upper order ideal.  Any anti-straight shape has the form
$w_X.\la = \La_X/\la^\vee$ for some straight shape $\la$ (see
\S\ref{sec:comin}).  An {\em anti-rectification\/} of an increasing
tableau $T$ is any tableau of anti-straight shape that can be obtained
by applying a sequence of reverse slides to $T$.

We require the following involution on the set of all increasing
tableaux.  If $T$ is an increasing tableau of shape $\nu/\la$, then we
let $w_X.T$ be the increasing tableau of shape $w_X.(\nu/\la) =
\la^\vee/\nu^\vee$ defined by $(w_X.T)(\al) = - T(w_X.\al)$.  This
involution commutes with jeu de taquin slides in the sense that
$w_X.\jdt_C(T) = \wh\jdt_{w_X.C}(w_X.T)$.

\begin{thm}\label{thm:urt_conds}
  Let $T$ be an increasing tableau of straight shape.  The following
  are equivalent.

  \noin{\rm(a)} $T$ is a unique rectification target.

  \noin{\rm(b)} $T$ has exactly one anti-rectification.

  \noin{\rm(c)} The jeu de taquin class $[T]$ contains exactly one
  tableau of straight shape.

  \noin{\rm(d)} The jeu de taquin class $[T]$ contains exactly one
  tableau of anti-straight shape.

  \noin{\rm(e)} All tableaux of $[T]$ can be obtained by applying a
  sequence of reverse slides to $T$.
\end{thm}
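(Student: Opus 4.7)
The plan is to establish the five conditions as equivalent by proving (a)$\Leftrightarrow$(c), (c)$\Leftrightarrow$(e), (c)$\Leftrightarrow$(d), and (b)$\Leftrightarrow$(d), with (c) as the hub. Direction (a)$\Rightarrow$(c) is Lemma~\ref{lem:urt_class}. For (c)$\Rightarrow$(a), given $S$ with $T$ as a rectification, any rectification $T'$ of $S$ is a straight tableau lying in $[S]=[T]$ and hence equals $T$ by (c). For (c)$\Leftrightarrow$(e): any $V\in[T]$ admits a rectification that must equal $T$ by (c), and reversing the forward-slide chain from $V$ to $T$ yields reverse slides from $T$ to $V$; conversely, since any non-trivial reverse slide from a straight tableau produces a genuinely skew outer shape, no iterated reverse slide from $T$ can return to a straight tableau other than $T$.

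For (c)$\Leftrightarrow$(d), I would use the combinatorial $K$-theory ring. Setting $\mu=\sh(T)$ and letting $N_s$ denote the number of straight-shape tableaux in $[T]$, Proposition~\ref{prop:assoc} gives $F_{[T]}(G_\la)=G_\la\cdot F_{[T]}(1)=N_s\,G_\la G_\mu$, so the number of tableaux of shape $\nu/\la$ in $[T]$ is $N_s\,c^\nu_{\la,\mu}$. Specializing to $\nu=\La_X$ and applying Theorem~\ref{thm:symmetry} gives $c^{\La_X}_{\la,\mu}=c^{\mu^\vee}_{\la,\emptyset}=\delta_{\la,\mu^\vee}$; hence every anti-straight tableau in $[T]$ has shape $\La_X/\mu^\vee$ and there are exactly $N_s$ of them. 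Thus (c), the equation $N_s=1$, and (d) are equivalent.

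For (b)$\Leftrightarrow$(d), direction (d)$\Rightarrow$(b) is immediate since any anti-rectification of $T$ is an anti-straight tableau in $[T]$ and at least one always exists. For (b)$\Rightarrow$(d), I would apply the involution $w_X$: setting $V=w_X.T$ (anti-straight) and $V_0=w_X.T^*$ (straight of shape $\mu^\vee$), condition (b) becomes ``$V$ has a unique rectification $V_0$'' and (d) becomes ``$V_0$ is the unique straight tableau in $[V_0]$.'' By the already-proved (a)$\Leftrightarrow$(c) applied to $V_0$, it suffices to show that $V_0$ is a unique rectification target, which I would do by propagating the property ``has unique rectification $V_0$'' from $V$ to every tableau in $[V_0]$ along a slide chain: a forward slide preserves uniqueness since the set of rectifications can only shrink (any rectification of $\jdt(S)$ pulls back to a rectification of $S$), while the reverse-slide step is handled by invoking the counting of the previous paragraph applied to $[V_0]$ together with the bijection $W\mapsto\jdt_U(W)$ between straight and anti-straight tableaux in $[V_0]$ (for a URT $U$ of shape $\mu$ provided by Theorem~\ref{thm:mininc_urt}), which forces any additional rectification to induce a second anti-rectification of $T$ and contradict (b).

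I expect the main obstacle to be precisely the reverse-slide step of this final induction: under a reverse slide the set of rectifications can only enlarge a priori, and ruling out any new rectification requires the $K$-theoretic input from Theorems~\ref{thm:iso_groth} and~\ref{thm:symmetry}, rather than purely combinatorial jeu de taquin arguments, to translate ``extra straight tableau in $[V_0]$'' into ``extra anti-straight tableau in $[T]$'' and then into ``extra anti-rectification of $T$.''
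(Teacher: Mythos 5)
Your architecture (with (c) as hub) is workable and parts of it track the paper's proof, but there are two problems, one fixable and one that leaves the hardest implication unproved. The fixable one is in (c)$\Leftrightarrow$(d): the identity $F_{[T]}(1)=N_s\,G_\mu$ presupposes that every straight-shape tableau in $[T]$ has shape $\mu=\sh(T)$, which is false in general --- jeu de taquin equivalent straight tableaux can have different shapes (the two rectifications in Example~\ref{ex:non_urt_A} have shapes $(3,1)$ and $(3,2)$). The correct statement is $F_{[T]}(1)=\sum_\rho n_\rho G_\rho$ with $n_\rho$ the number of straight tableaux of shape $\rho$ in $[T]$; extracting the coefficient of $G_{\La_X}$ in $F_{[T]}(G_\la)$ and using $c^{\La_X}_{\la,\rho}=c^{\rho^\vee}_{\la,\emptyset}=\delta_{\la,\rho^\vee}$ still gives that the number of anti-straight tableaux in $[T]$ equals $\sum_\rho n_\rho=N_s$, so the equivalence of (c) and (d) survives this repair. (The paper's version of this step, the implication (a)$\Rightarrow$(d), avoids the issue because it invokes (c) first, so that $F_{[T]}(1)=G_\la$ on the nose.)

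The genuine gap is (b)$\Rightarrow$(d) (equivalently (b)$\Rightarrow$(a)), which you yourself flag as the main obstacle. The counting identity only equates the \emph{total} numbers of straight and anti-straight tableaux in $[T]$, while hypothesis (b) only controls those anti-straight tableaux reachable from $T$ by reverse slides; so neither the counting nor the observation that forward slides shrink the set of rectifications rules out a new rectification appearing after a reverse-slide step in your chain, and the assertion that an extra rectification ``induces a second anti-rectification of $T$'' is exactly what needs proof. The paper closes this with a pigeonhole argument you are missing: writing $\la=\sh(T)$, hypothesis (b) forces $\wh\jdt_{Z'}(T)=T'$ for \emph{every} increasing tableau $Z'$ of shape $\La_X/\la$ (and one first checks $\sh(T')=\La_X/\la^\vee$ using $c^{\La_X}_{\la,\la^\vee}=1$); the infusion map $Z'\mapsto\jdt_T(Z')$ into tableaux of shape $\la^\vee$ is injective by Proposition~\ref{prop:kinfusion}, and after restricting values to a finite interval $[-a,a]$ its source and target have equal cardinality via $Z'\mapsto w_X.Z'$, so it is surjective. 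Hence every $Z$ of shape $\la^\vee$ equals $\jdt_T(Z')$ for some $Z'$, and applying the infusion involution to $(\,\jdt_T(Z'),\wh\jdt_{Z'}(T)\,)=(Z,T')$ gives $\jdt_Z(T')=T$, i.e.\ every rectification of $T'$ is $T$; from this (a) follows. Without some such surjectivity input your induction does not close, so as written the proposal does not establish the equivalence of (b) with the other conditions.
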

\begin{proof}
  It follows from Lemma~\ref{lem:urt_class} that (a) $\Rightarrow$
  (c), and we clearly have (d) $\Rightarrow$ (b) and (c) $\Rightarrow$
  (e) $\Rightarrow$ (a).  We will prove the implications (b)
  $\Rightarrow$ (a) $\Rightarrow$ (d).

  Assume that $T$ is a unique factorization target and let $T' \in
  [T]$ be any tableau of anti-straight shape.  Define the straight
  shapes $\la = \sh(T)$ and $\mu = w_X.\sh(T')$.  Then
  Corollary~\ref{cor:lrrule} and Theorem~\ref{thm:symmetry} imply that
  $c^\mu_{\la,\emptyset} = c^{\La_X}_{\la,\mu^\vee} > 0$, so we must
  have $\mu = \la$.  Furthermore, since $c^\la_{\la,\emptyset} = 1$ we
  deduce that $T'$ is the only tableau of anti-straight shape in
  $[T]$.  This proves the implication (a) $\Rightarrow$ (d).

  Assume that (b) holds and let $T'$ be the unique anti-rectification
  of $T$.  Set $\la = \sh(T)$ and let $U$ be any unique rectification
  target of shape $\la^\vee$.  Since $c^{\La_X}_{\la,\la^\vee} =
  c^\la_{\la,\emptyset} = 1$, we deduce from Corollary~\ref{cor:lrUT0}
  that $T'$ has shape $\La_X/\la^\vee$.  Let $S$ be any rectification
  of $T'$.  Then $S = \jdt_Z(T')$ for some increasing tableau $Z$ of
  shape $\la^\vee$.  To show that $T$ is a unique rectification
  target, it suffices to prove that $S=T$.

  Choose $a \in \N$ such that $Z$ has values in the interval $[-a,a]$.
  If $Z'$ is any increasing tableau of shape $\La_X/\la$, then since
  $\wh\jdt_{Z'}(T) = T'$ has shape $\La_X/\la^\vee$, it follows from
  Proposition~\ref{prop:kinfusion} that $\jdt_T(Z')$ has shape
  $\la^\vee$.  Furthermore, the assignment $Z' \mapsto \jdt_T(Z')$
  defines an injective map from the set of increasing tableaux of
  shape $\La_X/\la$ with values in $[-a,a]$ to the set of increasing
  tableaux of shape $\la^\vee$ with values in $[-a,a]$.  Now since $Z'
  \mapsto w_X.Z'$ is a bijection between the same two sets, we deduce
  from the pigeonhole principle that the map $Z' \mapsto \jdt_T(Z')$
  is surjective.  In particular, we can choose $Z'$ such that
  $\jdt_T(Z') = Z$.  Since $\wh\jdt_{Z'}(T) = T'$, another application
  of Proposition~\ref{prop:kinfusion} shows that $T = \jdt_Z(T') = S$,
  as required.
\end{proof}

\subsection{Minimal and maximal increasing tableaux}\label{sec:minmax}

For any skew shape $\nu/\la$, define the {\em minimal increasing
  tableau\/} $M_{\nu/\la}$ to be the unique increasing tableau that
fills the boxes of $\nu/\la$ with the smallest possible positive
integers.  More precisely, $M_{\nu/\la}(\al)$ is the maximal
cardinality of a totally ordered subset of $\nu/\la$ with $\al$ as its
largest element.  Define the {\em maximal increasing tableau\/} of
shape $\nu/\la$ by $\wh M_{\nu/\la} = w_X.M_{\la^\vee/\nu^\vee}$.
This tableau fills the boxes of $\nu/\la$ with the largest possible
{\em negative\/} integers.

\begin{example}
  Let $X = \Gr(m,n)$ be a Grassmann variety of type A and set $\theta
  = (9,7,6,6,4)/(5,3,2)$.  Then we have
  \[
  M_\theta = \text{\SMALL$\tableau{12}{&&&&&1&2&3&4\\ &&&1&2&3&4\\
    &&1&2&3&4\\ 1&2&3&4&5&6\\ 2&3&4&5}$}
  \hspace{5mm} \text{and} \hspace{5mm}
  \wh M_\theta = \text{\SMALL$\tableau{12}{&&&&&-4&-3&-2&-1\\ &&&-5&-4&-3&-1\\
    &&-5&-4&-3&-2\\ -6&-5&-4&-3&-2&-1\\ -4&-3&-2&-1}$} \ .\smallskip
  \]
\end{example}

The set of all minimal increasing tableaux is not closed under jeu de
taquin slides.  However, the rectification of any minimal increasing
tableau is a minimal increasing tableau.

\begin{thm}\label{thm:mininc_rect}
  Let $\nu/\la \subset \La_X$ be any skew shape.  Then $M_{\nu/\la}$
  has a unique rectification, which is the minimal increasing tableau
  $M_\mu$ of some straight shape $\mu$.
\end{thm}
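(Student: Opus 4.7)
The plan is to mirror the strategy used for Theorem~\ref{thm:mininc_urt}: deduce uniqueness abstractly from Theorem~\ref{thm:mininc_urt} itself, and then handle existence case by case.

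Uniqueness is automatic once existence is shown. Indeed, Theorem~\ref{thm:mininc_urt} asserts that the minimal increasing tableau $M_\mu$ of any straight shape $\mu \subset \La_X$ is a unique rectification target, so if $M_{\nu/\la}$ admits some rectification of the form $M_\mu$, then by the URT property of $M_\mu$ every rectification of $M_{\nu/\la}$ must equal that same $M_\mu$. Thus the content of the theorem reduces to the existence statement: at least one rectification of $M_{\nu/\la}$ is a minimal increasing tableau of straight shape.

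For existence I would proceed case by case according to the type of $X$. If $X$ is a Grassmannian of type A, the claim is supplied by the corresponding statement proved in Section~\ref{sec:grass}; if $X$ is a maximal orthogonal Grassmannian, the claim follows from the analogous statement in Section~\ref{sec:maxog}. If $X = Q^{2n}$ is an even quadric, every increasing tableau of straight shape in $\La_X$ is already a unique rectification target (as noted in the proof of Theorem~\ref{thm:mininc_urt}), and a direct inspection of the two possible shapes of $\La_{Q^{2n}}$ listed in Table~\ref{tab:shapes} verifies that any rectification of $M_{\nu/\la}$ is again minimal. Finally, if $X$ is the Cayley plane or the Freudenthal variety, the poset $\La_X$ is finite, so there are only finitely many pairs $(\nu,\la)$ and finitely many slide sequences to consider, and the claim can be verified by computer exactly as in the proof of Theorem~\ref{thm:mininc_urt}.

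The main obstacle lies in the type A and maximal orthogonal cases, where the proof is more substantial. In both, the natural strategy is induction on $|\la|$ via a single-slide lemma: for any maximal box $\al \in \la$, one has $\jdt_{\{\al\}}(M_{\nu/\la}) = M_{\nu^*/\la^*}$, where $\la^* = \la \ssm \{\al\}$ and $\nu^*/\la^*$ is the resulting shape. Granted this lemma, an easy induction produces a rectification of $M_{\nu/\la}$ of the required form, since the base case $\la = \emptyset$ is trivial. The single-slide lemma itself is delicate because the $K$-theoretic algorithm allows several dots to travel simultaneously and can modify the outer shape in complex ways; I would prove it using the $K$-Knuth equivalence invariant of Section~\ref{sec:kknuth}, which is preserved by forward and reverse slides and provides a clean handle on how slides transform increasing tableaux. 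The relevant Section~\ref{sec:grass} and Section~\ref{sec:maxog} versions of this lemma would then be cited in the present proof.
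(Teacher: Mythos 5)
Your top-level reduction is exactly the paper's: uniqueness follows from existence because each $M_\mu$ is a unique rectification target (Theorem~\ref{thm:mininc_urt} with Lemma~\ref{lem:urt_class}), and existence is handled case by case, citing the type A and type B results of sections \ref{sec:grass} and \ref{sec:maxog}, with the quadrics done by hand and the two exceptional varieties by computer. Up to that point you match the paper's proof of Theorem~\ref{thm:mininc_rect} verbatim.

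The problem is your sketch of how the essential type A and type B inputs would be proved. The ``single-slide lemma'' $\jdt_{\{\al\}}(M_{\nu/\la}) = M_{\nu^*/\la^*}$ is false, and the paper warns of this at the start of \S\ref{sec:minmax}: the set of minimal increasing tableaux is \emph{not} closed under jeu de taquin slides. A small counterexample: take $\nu/\la = (3,3)/(2)$ in $\N^2$, so
\[
M_{(3,3)/(2)} \ = \ \tableau{10}{&&{1}\\ {1}&{2}&{3}} \ ,
\]
and slide from the unique maximal box $[1,2]$ of $\la$. The dot swaps with the $1$ in box $[1,3]$, cannot move past the $2$, and then swaps with the $3$, producing the tableau of shape $(3,2)/(1)$ with entries $1,3$ in the first row and $1,2$ in the second; this is not minimal, since $M_{(3,2)/(1)}$ has a $2$ in box $[1,3]$. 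So an induction on $|\la|$ through single slides breaks at the first step (intermediate tableaux genuinely leave the class of minimal tableaux, even though the final rectification returns to it). The paper's actual argument for Theorem~\ref{thm:mininc_rect_A} avoids tracking individual slides altogether: it chooses a north-west-most box of the shape, splits $T$ into three minimal pieces $M$, $S$, $N$, and shows that $(\rect(M)\cdot S)\cdot\rect(N)$ is a minimal straight-shape tableau using Lemma~\ref{lem:mininc_prod} (the product of two straight-shape minimal increasing tableaux is minimal, proved by induction on $\lds(M)+\lis(N)$); since $T$ is jeu de taquin equivalent to this product and minimal straight-shape tableaux are unique rectification targets, the conclusion follows. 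The type B case is then deduced from type A by the doubling construction $T \mapsto T^2$ (Corollary~\ref{cor:mininc_rect_B}), not by a separate weak $K$-Knuth argument. You would need to replace your final paragraph with an argument of this kind.
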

\begin{proof}
  By Theorem~\ref{thm:mininc_urt} and Lemma~\ref{lem:urt_class} it is
  enough to show that the jeu de taquin class $[M_{\nu/\la}]$ contains
  at least one tableau of the form $M_\mu$.  This is proved in
  Theorem~\ref{thm:mininc_rect_A} if $X$ is a Grassmannian of type A,
  and in Corollary~\ref{cor:mininc_rect_B} if $X$ is a maximal
  orthogonal Grassmannian.  The result is an easy exercise if $X$ is a
  quadric hypersurface, and it has been verified by computer when $X$
  is the Cayley plane or the Freudenthal variety.
\end{proof}

\begin{cor}\label{cor:mininc_antirect}
  Let $\la \subset \La_X$ be any straight shape.  Then the unique
  anti-rectifica\-tion of $M_\la$ is $M_{w_X.\la}$.
\end{cor}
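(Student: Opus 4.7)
The plan is to combine Theorem~\ref{thm:mininc_rect} (applied to an anti-straight shape) with the URT rigidity from Theorem~\ref{thm:urt_conds}, using the duality symmetry of Theorem~\ref{thm:symmetry} to pin down the straight shape that arises. Write $w_X.\la = \La_X \ssm \la^\vee = \La_X/\la^\vee$, so that $w_X.\la$ is a bona fide skew shape (in fact an upper order ideal), and the minimal increasing tableau $M_{w_X.\la}$ is defined.

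The first step is to apply Theorem~\ref{thm:mininc_rect} to the skew shape $w_X.\la$. It yields a straight shape $\mu$ such that $M_{w_X.\la}$ has unique rectification $M_\mu$; in particular $M_{w_X.\la} \in [M_\mu]$. The second step is to show that $\mu = \la$. Since $M_{w_X.\la}$ is a tableau of shape $\La_X/\la^\vee$ lying in the jeu de taquin class of $M_\mu$, the definition of structure constants gives $c^{\La_X}_{\mu,\la^\vee} \geq 1$. By Theorem~\ref{thm:symmetry} we rewrite
\[
c^{\La_X}_{\mu,\la^\vee} \;=\; c^{(\la^\vee)^\vee}_{\mu,\emptyset} \;=\; c^\la_{\mu,\emptyset}\,,
\]
and this last constant counts tableaux of straight shape $\la$ in $[M_\mu]$. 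Since $M_\mu$ is a unique rectification target by Theorem~\ref{thm:mininc_urt}, Lemma~\ref{lem:urt_class} tells us that the only straight-shape tableau in $[M_\mu]$ is $M_\mu$ itself, forcing $\mu = \la$.

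The third step is to interpret this. We have shown $M_{w_X.\la} \in [M_\la]$, and $M_{w_X.\la}$ has anti-straight shape $w_X.\la$. But Theorem~\ref{thm:urt_conds}, applied to the URT $M_\la$, guarantees that $[M_\la]$ contains exactly one tableau of anti-straight shape. That tableau is therefore $M_{w_X.\la}$, and it is reached from $M_\la$ by a sequence of reverse slides, i.e.\ it is the unique anti-rectification of $M_\la$.

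The main obstacle is the identification $\mu = \la$ in the middle step: it is not purely combinatorial but relies on the symmetry $c^\nu_{\la,\mu} = c^{\mu^\vee}_{\la,\nu^\vee}$ of Theorem~\ref{thm:symmetry}, which in turn uses the geometry of the minuscule variety $X$ (this is precisely the step that fails for the Lagrangian Grassmannian, as noted in the excerpt). Everything else is a clean application of the URT machinery already in place.
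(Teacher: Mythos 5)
Your proposal is correct and follows essentially the same route as the paper's proof: rectify $M_{w_X.\la}$ via Theorem~\ref{thm:mininc_rect} to some $M_\mu$, use the positivity $c^{\La_X}_{\mu,\la^\vee}>0$ together with the symmetry of Theorem~\ref{thm:symmetry} to force $\mu=\la$, and then invoke Theorem~\ref{thm:urt_conds} to identify $M_{w_X.\la}$ as the unique anti-rectification. Your write-up just makes explicit the intermediate appeals to Lemma~\ref{lem:urt_class} that the paper leaves implicit.
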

\begin{proof}
  Let $M_\mu$ be the rectification of $M_{w_X.\la}$.  Then
  $c^\la_{\mu,\emptyset} = c^{\La_X}_{\mu,\la^\vee} > 0$, so we must
  have $\mu = \la$.  Theorem~\ref{thm:urt_conds} shows that
  $M_{w_X.\la}$ is the only anti-rectification of $M_\la$.
\end{proof}

\begin{cor}\label{cor:maxinc_urt}
  If $\la$ is a straight shape, then $\wh M_\la$ is a unique
  rectification target.
\end{cor}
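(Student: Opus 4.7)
The plan is to reduce the claim to Theorem~\ref{thm:mininc_rect} by using the involution $T \mapsto w_X.T$ to trade ``rectification'' for ``anti-rectification'', and then invoke Theorem~\ref{thm:urt_conds}.

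First I would unwind the definition of $\wh M_\la$. Setting $\nu/\la$ in the definition $\wh M_{\nu/\la} = w_X.M_{\la^\vee/\nu^\vee}$ equal to $\la/\emptyset$, and using $\emptyset^\vee = \La_X$, one gets $\wh M_\la = w_X.M_{\La_X/\la^\vee}$. Since $T \mapsto w_X.T$ is an involution, this can be rewritten as $w_X.\wh M_\la = M_{\La_X/\la^\vee}$, a minimal increasing tableau of skew shape.

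Next, I would use the compatibility $w_X.\jdt_C(T) = \wh\jdt_{w_X.C}(w_X.T)$ recorded in Section~\ref{sec:minmax}. This relation shows that forward and reverse slides are exchanged by the involution, so it restricts to a bijection between rectifications of $M_{\La_X/\la^\vee}$ (tableaux of straight shape obtainable via forward slides) and anti-rectifications of $\wh M_\la$ (tableaux of anti-straight shape obtainable via reverse slides). By Theorem~\ref{thm:mininc_rect}, $M_{\La_X/\la^\vee}$ has a unique rectification, so $\wh M_\la$ has a unique anti-rectification. The equivalence (a)~$\Leftrightarrow$~(b) of Theorem~\ref{thm:urt_conds} then immediately yields that $\wh M_\la$ is a unique rectification target.

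There is no real obstacle here: once one observes the identity $w_X.\wh M_\la = M_{\La_X/\la^\vee}$, all the work has already been done by Theorem~\ref{thm:mininc_rect} (uniqueness of rectification for minimal skew tableaux) and by the characterization of unique rectification targets in Theorem~\ref{thm:urt_conds}. The only thing to be a little careful about is the direction of the involution: $\wh M_\la$ itself is of straight shape and its anti-rectifications are of anti-straight shape, so one must pass through $w_X$ rather than try to apply Theorem~\ref{thm:mininc_rect} to $\wh M_\la$ directly.
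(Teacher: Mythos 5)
Your proof is correct and follows essentially the same route as the paper: both arguments transfer the question through the involution $T \mapsto w_X.T$, invoke the uniqueness of rectification of minimal skew tableaux, and conclude via Theorem~\ref{thm:urt_conds}. The only (harmless) difference is that you appeal to Theorem~\ref{thm:mininc_rect} directly and use the equivalence (a)$\Leftrightarrow$(b), whereas the paper routes through Corollary~\ref{cor:mininc_antirect} to get $w_X.[M_\la] = [\wh M_\la]$ and then uses the fact that $M_\la$ is a unique rectification target.
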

\begin{proof}
  Since the action of $w_X$ on tableaux is compatible with jeu de
  taquin slides, we deduce from Corollary~\ref{cor:mininc_antirect}
  that $w_X.[M_\la] = w_X.[M_{w_X.\la}] = [w_X.M_{w_X.\la}] = [\wh
  M_\la]$.  The result therefore follows from
  Theorem~\ref{thm:urt_conds} and the fact that $M_\la$ is a unique
  rectification target.
\end{proof}

Given an increasing tableau $T$ of shape $\nu/\la$, define the {\em
  greedy rectification\/} $\rect(T)$ as follows.  If $\la=\emptyset$
then set $\rect(T) = T$.  Otherwise define $\rect(T) =
\rect(\jdt_C(T))$, where $C$ is the set of all maximal boxes in $\la$.

\begin{cor}\label{cor:lr_greedy}
  Let $\la$, $\mu$, and $\nu$ be straight shapes in $\La_X$ and let
  $T_0$ be any increasing tableau of shape $\mu$.  Then
  $c^\nu_{\la,\mu}$ is the number of increasing tableaux $T$ of shape
  $\nu/\la$ for which $\rect(T) = T_0$.
\end{cor}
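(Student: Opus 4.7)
The plan is to identify the greedy rectification $\rect(T)$ with the $K$-infusion $\jdt_{\wh M_\la}(T)$ against the maximal increasing tableau, and then invoke Corollary~\ref{cor:lrUT0}. Once the identity $\rect(T) = \jdt_{\wh M_\la}(T)$ is established for every increasing tableau $T$ of shape $\nu/\la$, the corollary follows immediately: by Corollary~\ref{cor:maxinc_urt}, $\wh M_\la$ is a unique rectification target, and then Corollary~\ref{cor:lrUT0} applied with $U = \wh M_\la$ states that $c^\nu_{\la,\mu}$ equals the number of increasing tableaux $T$ of shape $\nu/\la$ with $\jdt_{\wh M_\la}(T) = T_0$, which by the claim is the number of $T$ with $\rect(T) = T_0$.

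To identify $\rect(T)$ with $\jdt_{\wh M_\la}(T)$, I would unwind the infusion inductively as in the proof of Proposition~\ref{prop:kinfusion}. Set
\[
Y_m = \Bigl(\prod_{a=m+1}^{\infty} \prod_{b=\infty}^{-\infty} \swap_{\ov{a},b}\Bigr)(\ov{\wh M_\la} \cup T),
\]
so that $Y_{m-1}|_\Z = \jdt_{C_m}(Y_m|_\Z)$ with $C_m = Y_m^{-1}(\ov m)$. Since only swaps involving $\ov a$ with $a > m$ have been used to form $Y_m$, the positions of the barred entries $\ov a$ with $a \leq m$ are unchanged from their positions in $\ov{\wh M_\la}$. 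Hence $C_m = \wh M_\la^{-1}(m)$, and the inner shape $\mu'_m$ of $Y_m|_\Z$ equals $\{\al \in \la : \wh M_\la(\al) \leq m\}$. As $\wh M_\la$ takes values in $\{-h,\dots,-1\}$ for some height $h$, the tableaux $Y_m|_\Z$ are constant for $m \geq -1$ and for $m \leq -h-1$, and the infusion collapses to the iterated sequence of slides $\jdt_{C_{-1}}, \jdt_{C_{-2}},\dots,\jdt_{C_{-h}}$ applied to $T$.

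The remaining verification is that at each stage $k$, the set $C_{-k} = \wh M_\la^{-1}(-k)$ consists precisely of the maximal boxes of the current inner shape $\mu'_{-k+1} = \{\al \in \la : \wh M_\la(\al) \leq -k\}$, which matches the greedy choice. By definition $\wh M_\la(\al) = -h(\al)$, where $h(\al)$ is the length of the longest chain in $\la$ starting at $\al$ and going upward. If $\wh M_\la(\al) = -k$, then every $\be > \al$ in $\la$ satisfies $h(\be) \leq k-1$, hence $\wh M_\la(\be) > -k$ and $\be \notin \mu'_{-k+1}$, so $\al$ is maximal in $\mu'_{-k+1}$. Conversely, if $\al \in \mu'_{-k+1}$ is maximal, then each $\be > \al$ in $\la$ has $\wh M_\la(\be) > -k$, forcing $h(\al) \leq k$; combined with $h(\al) \geq k$ this gives $\wh M_\la(\al) = -k$. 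Iterating yields $\jdt_{\wh M_\la}(T) = \rect(T)$. The main technical point, more bookkeeping than obstacle, is confirming that swaps for $a > m$ do not disturb entries $\ov{a'}$ with $a' \leq m$ and the order-theoretic identification of $\wh M_\la^{-1}(-k)$ with the maximal elements of $\mu'_{-k+1}$.
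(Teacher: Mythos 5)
Your proposal is correct and follows the paper's proof exactly: the paper likewise deduces the corollary from Corollary~\ref{cor:lrUT0} and Corollary~\ref{cor:maxinc_urt} via the identity $\rect(T) = \jdt_{\wh M_\la}(T)$, which it asserts without proof and which you verify in detail by unwinding the infusion as in Proposition~\ref{prop:kinfusion}. Your verification is sound (note only a harmless indexing slip: the inner shape of the tableau to which $\jdt_{C_{-k}}$ is applied is $\mu'_{-k} = \{\al \in \la : \wh M_\la(\al) \leq -k\}$, which is the set you correctly identify despite labeling it $\mu'_{-k+1}$).
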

\begin{proof}
  This follows from corollaries \ref{cor:lrUT0} and
  \ref{cor:maxinc_urt} because $\rect(T) = \jdt_{\wh M_\la}(T)$.
\end{proof}


\section{$K$-Knuth equivalence of hook-closed
  tableaux}\label{sec:kknuth}

In this section we introduce our main technical tool, the {\em
  $K$-Knuth equivalence relation\/} on words of integers.  This
relation governs jeu de taquin equivalence of increasing tableaux for
Grassmannians of type A and will be used to show that minimal
increasing tableaux for these varieties are unique rectification
targets.  A weak version of $K$-Knuth equivalence will be defined in
section \ref{sec:weak_kknuth}, which plays a similar role for maximal
orthogonal Grassmannians.

Given two boxes $\al_1 = [i_1,j_1]$ and $\al_2 = [i_2,j_2]$ in $\N^2$,
we will say that $\al_1$ is {\em north of\/} $\al_2$ if $i_1 \leq
i_2$, and we will say that $\al_1$ is {\em strictly north of\/}
$\al_2$ if $i_1 < i_2$.  Similar terminology will be used for the
other directions, where the boxes in $\N^2$ are arranged as in section
\ref{sec:comin}.  {\em North-east of\/} means north of and east of,
and {\em strictly north-east\/} means strictly north and strictly
east.  Notice that $\al_1$ is north-west of $\al_2$ if and only if
$\al_1 \leq \al_2$.

A finite subset $\theta \subset \N^2$ will be called a {\em
  hook-closed shape\/} if, whenever $\theta$ contains two boxes
$\al_1=[i_1,j_1]$ and $\al_2=[i_2,j_2]$ such that $\al_1\leq\al_2$,
$\theta$ also contains the {\em north-east hook\/} spanned by $\al_1$ and
$\al_2$.  More precisely, $\theta$ must contain all boxes $[i_1,c]$ for
$j_1 \leq c \leq j_2$ and all boxes $[r,j_2]$ for $i_1 \leq r \leq
i_2$.
\[
\tableau{14}{{\al_1}&{}&{}&{}&{}&{}\\&&&&&{}\\&&&&&{}\\&&&&&{\al_2}}
\]
Examples of hook-closed shapes include skew shapes for Grassmannians
of type A and maximal orthogonal Grassmannians.

Let $\theta$ be a hook-closed shape.  A {\em weakly increasing
  tableau\/} $T$ of shape $\theta$ is a map $T : \theta \to \Z$ such
that all rows of $T$ are weakly increasing from left to right and all
columns of $T$ are weakly increasing from top to bottom.  The tableau
$T$ is {\em strictly increasing\/} (or simply {\em increasing\/}) if
all rows and columns of $T$ are strictly increasing.

\begin{defn}\label{defn:readword}
  Let $T$ be a weakly increasing tableaux of hook-closed shape.  A
  {\em reading word\/} of $T$ is any word listing the boxes of $T$ in
  any order for which (i) each box $\al$ appears before all boxes
  north-east of $\al$, (ii) if a box $\al$ is equal to the box
  immediately above $\al$, then $\al$ appears before all boxes
  strictly north of $\al$, and (iii) if a box $\al$ is equal to the
  box immediately to the right of $\al$, then $\al$ appears before all
  boxes strictly east of $\al$.
\end{defn}

Notice that some weakly increasing tableaux have no reading words.

\begin{example}
  The following is a weakly increasing tableau of hook-closed shape.
  \[
  \tableau{14}{
    &   &   &   &   &   &   &   &{2}\\
    &   &   &   &   &   &   &{1}&{2}\\
    &   &   &   &   &{2}&{2}\\
    &   &   &   &   &   &{3}\\
    &   &{1}&{2}&{4}\\
    {1}&{2}&{3}&{3}\\
    &{2}&{3}&{4}\\
    &   &   &{5}
  }
  \]
  This tableau has the following four reading words:
  \[
  \begin{split}
    &(2,3,1,2,3,1,5,4,3,2,4,2,3,2,1,2,2) \ , \ 
     (2,3,1,2,3,5,1,4,3,2,4,2,3,2,1,2,2) \\
    &(2,3,1,2,3,5,4,1,3,2,4,2,3,2,1,2,2) \ , \ 
     (2,3,1,2,3,5,4,3,1,2,4,2,3,2,1,2,2) \ .
  \end{split}
\]
\end{example}

\begin{defn}\label{defn:kknuth}
  Define the {\em $K$-Knuth equivalence relation\/} on words of
  integers, denoted by $\equiv$, as the symmetric transitive closure
  of the following basic relations.  For any words of integers $\bu =
  (u_1,\dots,u_s)$ and $\bv = (v_1,\dots,v_t)$, and $a,b,c \in \Z$ we
  set
  \[
  \begin{split}
    (\bu,a,a,\bv) &\equiv (\bu,a,\bv) \,, \\
    (\bu,a,b,a,\bv) &\equiv (\bu,b,a,b,\bv) \,, \\
    (\bu,a,b,c,\bv) &\equiv (\bu,a,c,b,\bv) \ \ \ 
    \text{if $b < a < c$\,,} \ \ \text{and} \\
    (\bu,a,b,c,\bv) &\equiv (\bu,b,a,c,\bv) \ \ \ 
    \text{if $a < c < b$\,.}
  \end{split}
  \]
\end{defn}

\begin{lemma}\label{lem:tabclass}
  Let $T$ be a weakly increasing tableau of hook-closed shape.  Then
  all reading words of\/ $T$ are $K$-Knuth equivalent.
\end{lemma}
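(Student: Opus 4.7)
The plan is to identify reading words of $T$ with the linear extensions of a partial order $\prec$ on the boxes of $\theta$ generated by the three clauses of Definition~\ref{defn:readword}. Using the classical fact that the set of linear extensions of a finite poset forms a connected graph under adjacent transpositions of $\prec$-incomparable elements, the lemma reduces to the following: if two reading words $w, w'$ differ by a single adjacent swap at two $\prec$-incomparable boxes $\al, \be$, then $w \equiv w'$.

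Suppose $\al, \be$ are such a $\prec$-incomparable pair, with $a = T(\al)$ and $b = T(\be)$. If $a = b$ then $w$ and $w'$ coincide as sequences of integers and there is nothing to prove, so assume $a \ne b$. Two boxes lying in the same row or column of $\theta$ are always NE-comparable, hence $\prec$-comparable via condition~(i); consequently $\al, \be$ are strictly comparable in the box poset $\leq$, and after symmetry we may take $\al = [r_1, c_1]$ strictly north-west of $\be = [r_2, c_2]$.  Hook-closedness then provides in $\theta$ the entire NE hook of $\al$ and $\be$, namely the row segment $[r_1, c_1], [r_1, c_1+1], \dots, [r_1, c_2]$ and the column segment $[r_1, c_2], [r_1+1, c_2], \dots, [r_2, c_2]$.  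The weakly increasing property of $T$ forces the values along this hook to interpolate between $a$ and $b$.

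The heart of the proof is to realize the single transposition $(a, b) \leftrightarrow (b, a)$ using the four relations of Definition~\ref{defn:kknuth}. When $|a - b| \geq 2$, some entry of the hook carries a value $c$ strictly between $a$ and $b$; my plan is to migrate the corresponding letter of $w$ through the word, using relations of type 3 and 4 to commute it past unrelated entries, until it sits immediately after the pair $(a, b)$. A single application of relation 4 (or 3) then performs the desired swap, and the letter is migrated back to its original position to recover $w'$. The boundary case $|a-b|=1$ is handled separately via relation 2, applied with the aid of a duplicated neighboring value created by relation 1. The main obstacle is verifying that each step of the migration is a legitimate $K$-Knuth move; I plan to do this by induction on the distance from the chosen intermediate value to the pair $(\al, \be)$ in $w$, exploiting the hook-closedness of $\theta$ and the weak monotonicity of $T$ to guarantee that one of the four relations applies at each step.
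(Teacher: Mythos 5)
Your reduction to adjacent transpositions is sound: reading words are exactly the linear extensions of the transitive closure of the relation generated by (i)--(iii), and linear extensions of a finite poset are connected under adjacent swaps of incomparable elements. This is a genuinely different framing from the paper, which instead inducts on the number of boxes and compares the two reading words only at their \emph{first} letters. But the local step where all the work lives has a real gap. The $K$-Knuth relations of Definition~\ref{defn:kknuth} only rearrange three \emph{consecutive} letters under specific inequality constraints; there is no general commutation of distant or even adjacent letters. So your plan to ``migrate'' an intermediate value $c$ through the word until it sits next to the pair $(a,b)$, and then back, is exactly the hard part, and it is not clear it can be done: the letters separating $c$ from the pair in the reading word come from boxes that need not lie on the hook and carry arbitrary values, so no relation need apply at a given step. (Indeed, Remark~\ref{rmk:kknuth_difficult} of the paper emphasizes that deciding $K$-Knuth equivalence is subtle precisely because one may have to pass through longer words.) The paper sidesteps this entirely: because the swapped boxes $x,y$ are the \emph{first} letters of the two words, the tails are reading words of the smaller tableaux $T\ssm\{x\}$ and $T\ssm\{y\}$, and the induction hypothesis rearranges each tail so that the witness box $z$ lands in position three, where a single application of the relation $(x,y,z)\equiv(y,x,z)$ with $x<z<y$ finishes the proof. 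Your linear-extension reduction gives you no such induction hypothesis for a swap occurring deep inside the word.

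A second, smaller gap: the existence of a hook entry $c$ with $a<c<b$ is asserted but not proved, and it does not follow from weak monotonicity alone (the values along the hook can jump, e.g.\ from $a$ directly to $b$). In the paper this is where conditions (ii) and (iii) of Definition~\ref{defn:readword} earn their keep: the witness $z$ is chosen as a south-west corner of $T\ssm\{x,y\}$ lying on the hook, and if $T(z)$ equaled $T(x)$ or $T(y)$ then one of the two given orderings would violate (ii) or (iii), so it could not be a reading word in the first place. Relatedly, your ``boundary case $|a-b|=1$'' does not actually arise -- in that case the two boxes are forced to be $\prec$-comparable -- and the proposed fix via relations 1 and 2 does not obviously produce the transposition $(a,b)\leftrightarrow(b,a)$ in any event. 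To salvage your approach you would need to prove a lemma of the form: if $\al,\be$ are $\prec$-incomparable and adjacent in a linear extension, then some box $z$ on their hook with $T(\al)<T(z)<T(\be)$ can be placed immediately after $\be$ in that linear extension; at that point you are essentially reconstructing the paper's argument locally.
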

\begin{proof}
  Let $\bu$ and $\bv$ be reading words of $T$.  We prove that $\bu
  \equiv \bv$ by induction on the number of boxes in $T$.  Write $\bu
  = (x,\bu')$ and $\bv = (y,\bv')$, where $x$ and $y$ are the first
  integers in the words.  We will identify $x$ and $y$ with the boxes
  of $T$ that they came from.  We may assume that $x$ is strictly
  north-west of $y$.  Since the shape of $T$ is hook-closed, the
  tableau $T \ssm \{x,y\}$, obtained by removing the boxes $x$ and $y$
  from $T$, has a south-west corner $z$, such that $z$ is south-east
  of $x$ and north-west of $y$.  We must have $x < z < y$, since if
  one of these inequalities is not strict, then either $\bv$ or $\bu$
  is not a valid reading word of $T$.  Let $\bw$ be any reading word
  of $T \ssm \{x,y,z\}$; for example, such a word can be obtained by
  removing the first occurrences of $x$, $y$, and $z$ from $\bu$.
  Since $\bu'$ and $(y,z,\bw)$ are reading words for $T \ssm \{x\}$,
  it follows by induction that $\bu' \equiv (y,z,\bw)$, and since
  $\bv'$ and $(x,z,\bw)$ are reading words for $T \ssm \{y\}$ we
  similarly obtain $\bv' \equiv (x,z,\bw)$.  We deduce that $\bu =
  (x,\bu') \equiv (x,y,z,\bw) \equiv (y,x,z,\bw) \equiv (y,\bv') =
  \bv$, as required.
\end{proof}

We will say that two weakly increasing tableaux $T_1$ and $T_2$ are
$K$-Knuth equivalent, written $T_1 \equiv T_2$, if a reading word of
$T_1$ is $K$-Knuth equivalent to a reading word of $T_2$.  This
defines an equivalence relation on the set of all weakly increasing
tableaux of hook-closed shape that admit reading words.

\begin{lemma}\label{lem:interval}
  Let $[a,b]$ be an integer interval.\smallskip

  \noin{\rm(a)} Let $\bw_1$ and $\bw_2$ be $K$-Knuth equivalent words.
  For $i=1,2$, let $\bw'_i$ be the word obtained from $\bw_i$ by
  skipping all integers not contained in the interval $[a,b]$.  Then
  $\bw'_1$ and $\bw'_2$ are $K$-Knuth equivalent words.\smallskip

  \noin{\rm(b)} Let $T_1$ and $T_2$ be $K$-Knuth equivalent weakly
  increasing tableaux of hook-closed shapes.  For $i=1,2$, let
  $T_i|_{[a,b]}$ denote the tableau obtained from $T_i$ by removing
  all boxes not in $[a,b]$.  Then $T_1|_{[a,b]}$ and $T_2|_{[a,b]}$
  are $K$-Knuth equivalent weakly increasing tableaux of hook-closed
  shapes.
\end{lemma}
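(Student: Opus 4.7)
The plan is to prove (a) by a case analysis on the four basic relations defining $\equiv$, and then to reduce (b) to (a) after verifying that restriction interacts well with shapes and reading words.

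For (a), by transitivity it will suffice to show that if $\bw_1$ and $\bw_2$ differ by a single basic $K$-Knuth move from Definition~\ref{defn:kknuth}, then their restrictions to the interval (which I rename $[p,q]$ to avoid the clash with the notation of Definition~\ref{defn:kknuth}) remain $K$-Knuth equivalent. For each of the four basic moves I would examine which of the letters $a,b,c$ lie in $[p,q]$, and verify in each subcase that either both sides restrict to the same word, or the restrictions differ by another basic move. For example, in the move $(\bu,a,b,c,\bv) \equiv (\bu,a,c,b,\bv)$ with $b<a<c$, if $a \notin [p,q]$ then either $a<p$, forcing $b<p$ and hence $b \notin [p,q]$, or $a>q$, forcing $c>q$ and hence $c \notin [p,q]$; in either case the two restrictions coincide. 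The other subcases are dispatched by the same elementary arithmetic, appealing to the order constraints in relations~3 and~4 and occasionally to the idempotence relation~1 in the situation where removing a letter leaves two equal letters adjacent (as happens, for instance, when relation~2 has exactly one of $a,b$ outside $[p,q]$).

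For (b), I first need to check that $T_i|_{[a,b]}$ is itself a weakly increasing tableau of hook-closed shape. Weak monotonicity is inherited from $T_i$. For hook-closedness, given boxes $\alpha_1 \leq \alpha_2$ in the restricted shape, every box $\beta$ on their north-east hook satisfies $\alpha_1 \leq \beta \leq \alpha_2$, so weak increasingness of $T_i$ forces $T_i(\beta) \in [a,b]$, whence $\beta$ lies in the restricted shape. Next I would show that the restriction to $[a,b]$ of any reading word $\bw$ of $T_i$ is a reading word of $T_i|_{[a,b]}$: condition~(i) of Definition~\ref{defn:readword} is immediately preserved; for (ii) and (iii), the observation is that an intermediate box squeezed between two values of $[a,b]$ in the same row or column must itself have value in $[a,b]$, so the box immediately above (or to the right of) any box of the restricted shape agrees with the corresponding neighbor in the ambient shape. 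Combining these, if $T_1 \equiv T_2$ I can pick reading words $\bw_1$ and $\bw_2$ with $\bw_1 \equiv \bw_2$ (using Lemma~\ref{lem:tabclass} to make any convenient choice), restrict them, and apply (a) to conclude.

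The main obstacle will be the bookkeeping in part~(a): four basic relations each split into several subcases according to which of the distinguished letters lie in $[p,q]$, and the order constraints in relations~3 and~4 are essential for ruling out subcases that would otherwise restrict to words not linked by any basic move. Once (a) is established, (b) follows by the short formal argument sketched above.
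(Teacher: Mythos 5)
Your proposal is correct and is essentially the direct verification that the paper compresses into ``This is immediate from the definitions'': the case analysis for (a) works because the order constraints $b<a<c$ and $a<c<b$ force the companion letters out of the interval whenever the pivot letter is, and relation~2 degenerates to the idempotence relation exactly as you say. Your reduction of (b) to (a) via the observations that restriction preserves hook-closedness (since every box on a north-east hook is sandwiched between its endpoints in the partial order) and sends reading words to reading words is also sound.
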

\begin{proof}
  This is immediate from the definitions.
\end{proof}

\begin{defn}
  Let $\theta$ be a hook-closed shape.  A {\em dotted increasing
    tableau\/} of shape $\theta$ is a map $T_0 : \theta \to \Z \cup
  \{\bullet\}$ such that there exists an integer $k$ with the property
  that $T_0$ becomes a strictly increasing tableau (of rational
  numbers) if all dots in $T_0$ are replaced with the number
  $k+\frac{1}{2}$.  A {\em resolution\/} of a dotted increasing
  tableau $T_0$ is any tableau obtained by replacing each dot in $T_0$
  with either the maximum of the boxes immediately above and to the
  left of the dot, or the minimum of the boxes immediately below and
  to the right of the dot.  If there are no boxes immediately above or
  to the left of a dot, or if there are no boxes immediately below or
  to the right of a dot, then the box containing the dot may be
  removed.
\end{defn}

\begin{example}
  The following dotted increasing tableau is displayed next to one of
  its resolutions.
  \[
  \tableau{14}{
    & & & &1&3&\bullet&8\\
    & &2&3&4&6&9\\
    &1&3&5&7&\bullet\\
    &2&\bullet&8&9\\
    2&\bullet&8&9 
  }
  \ \ \ \ 
  \tableau{14}{
    & & & &1&3&8&8\\
    & &2&3&4&6&9\\
    &1&3&5&7\\
    &2&8&8&9\\
    2&2&8&9 
  }
  \]
\end{example}\smallskip

\begin{lemma}\label{lem:readword}
  If $T$ is a resolution of a dotted increasing tableau, then $T$ has
  at least one reading word.
\end{lemma}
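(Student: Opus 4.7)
The plan is to show that the reading-word constraints on the boxes of $T$ form an acyclic directed graph $G$; any linear extension of $G$ will then be a reading word. Define $G$ so that $\al\to\be$ iff $\al\neq\be$ and one of the following holds: (i) $\be$ is northeast of $\al$; (ii) $T(\al)$ equals the value of the box immediately above $\al$ and $\be$ is strictly north of $\al$; (iii) $T(\al)$ equals the value of the box immediately to the right of $\al$ and $\be$ is strictly east of $\al$. Asking whether $T$ has a reading word is exactly asking whether $G$ is acyclic.

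The key structural observation I will exploit is that every equal-value adjacency in $T$ forces a dot in $T_0$. Indeed, $T_0$ is strictly increasing when dots are interpreted as $k+\tfrac12$, so two integer entries in adjacent cells cannot coincide; hence if $T(\al)$ equals the value of the above-box and both equal $v$, then either $\al$ is a dot L-resolved to $v$ (with above-integer $v$) or $\al$ has integer value $v$ and above is a dot R-resolved to $v$, and the symmetric dichotomy holds horizontally. Moreover, the set of dots in $T_0$ forms an antichain in the coordinate partial order on $\N^2$, since all dots share the rational value $k+\tfrac12$ and two comparable positions with equal rational values would violate the strict increase of $T_0$.

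Suppose, for contradiction, that $G$ contains a directed cycle; take one of minimum length. Around this cycle the coordinate sums $\sum(i_{t+1}-i_t)$ and $\sum(j_{t+1}-j_t)$ must vanish. Since type-(i) arcs satisfy $\Delta i\le 0$ and $\Delta j\ge 0$, type-(ii) arcs satisfy $\Delta i<0$, and type-(iii) arcs satisfy $\Delta j>0$, the cycle must contain at least one (ii)-arc and at least one (iii)-arc. Each (ii)-arc at a box $\al_t$ produces an associated dot of $T_0$ in column $j_{\al_t}$ and row in $\{i_{\al_t}-1,i_{\al_t}\}$; each (iii)-arc at $\al_t$ produces an associated dot in row $i_{\al_t}$ and column in $\{j_{\al_t},j_{\al_t}+1\}$. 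I will then argue that two of these associated dots must lie in comparable positions of $\N^2$, contradicting the antichain property.

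The hardest step will be this last one: using the geometric closure of the cycle to locate a (ii)-dot and a (iii)-dot in comparable positions. I expect to carry this out by an extremal-element argument---for instance, choosing the (iii)-arc in the cycle whose associated row is minimal and the (ii)-arc whose associated column is maximal, then using the coordinate-sum balance to show that these particular associated dots lie in northwest/southeast comparable positions, regardless of whether each dot is L-~or R-resolved. This final case analysis is where the assumption that $T$ is a resolution---rather than merely a weakly increasing tableau of hook-closed shape---enters in an essential way.
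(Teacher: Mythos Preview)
Your graph-theoretic strategy is genuinely different from the paper's argument.  The paper proves the lemma by induction on the number of dots: it chooses the south-west-most dotted box $\al$, observes that the region of $T$ strictly north or strictly east of $\al$ is itself a resolution of a dotted increasing tableau with one fewer dot (hence has a reading word by induction), and then explicitly concatenates this with reading words for the strictly-increasing region south-west of $\al$ and for the row and column through $\al$.  That proof is short and constructive; yours would be non-constructive but would explain structurally why the reading-word constraints are consistent.

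The proposal, however, has a real gap at precisely the step you flag as hardest, and your suggested extremal choice does not close it.  Choosing a (iii)-arc of minimal source-row and a (ii)-arc of maximal source-column does not by itself force the two associated dots to be comparable: nothing prevents a (ii)-dot at, say, $(2,10)$ and a (iii)-dot at $(10,2)$ with those extremal properties, and these are incomparable.  What does work is to look at \emph{consecutive} special arcs in the minimal cycle.  Classify each arc as type~(i) when possible; the remaining arcs are then ``pure'' (ii)-arcs going strictly north-west or ``pure'' (iii)-arcs going strictly south-east, and both kinds must occur.  Pick a pure (iii)-arc from $v_t$ followed, after only (i)-arcs, by a pure (ii)-arc from $v_s$.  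Since (i)-arcs go north-east, $j_s\geq j_{t+1}>j_t$.  The crucial inequality $i_s>i_t$ comes not from arc directions but from \emph{minimality of the cycle}: if $i_s\leq i_t$ then $v_s$ would lie north-east of $v_t$, and the (i)-arc $v_t\to v_s$ would shortcut the cycle.  With $i_s>i_t$ and $j_s>j_t$, the (ii)-dot (column $j_s$, row $i_s$ or $i_s{-}1$) lies weakly south-east of the (iii)-dot (row $i_t$, column $j_t$ or $j_t{+}1$); a short case check shows they cannot coincide, since coincidence would require the same dot to be resolved both up and down.  So your plan can be completed, but the decisive use of cycle minimality---to rule out a shortcut, not merely to ensure simplicity---is something your sketch does not anticipate.
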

\begin{proof}
  Let $T_0$ be a dotted increasing tableau that has $T$ as a
  resolution.  If $T_0$ contains no dots, then conditions (ii) and
  (iii) of Definition~\ref{defn:readword} are vacuous and the lemma is
  clear.  Otherwise let $\al$ be the south-west most box of $T_0$ that
  contains a dot, and let $U$ (respectively $U_0$) be the tableau of
  boxes in $T$ (respectively $T_0$) that are strictly north or
  strictly east of $\al$.  Since $U$ is a resolution of $U_0$, it
  follows by induction on the number of dots in $T_0$ that $U$ admits
  a reading word $\bu$.  We also let $V$ be the tableau of boxes in
  $T$ that are strictly south-west of $\al$, and let $\bb =
  (b_t,b_{t-1},\dots,b_1)$ be the word listing the boxes to the left of
  $\al$ and $\bc = (c_m,c_{m-1},\dots,c_1)$ the word of boxes below
  $\al$.
  \[
  T \ = \ \ 
  \tableau{14}{
    &&&&[lt]&[t]&[tr]\\
    &&[lh]&[h]&&&[br]\\
    [lh]{b_t}&[h]{\,\cdots}&[hr]{b_1}&
    [a]{{\scriptstyle T\!(\!\al\!)}}&[l]&[r]\\
    [lh]&[t]&[tr]&[tv]{c_1}&[l]&[br]\\
    &[lb]&[b]&[v]{\raisebox{6pt}{\vdots}}&{}\\
    &&&[vb]{c_m}&{}
  }
  \hspace{-11mm}\raisebox{6mm}{$U$}
  \hspace{-18mm}\raisebox{-5mm}{$V$}\medskip
  \]
  Since $T_0$ is a dotted increasing tableau, it follows that $b_1 <
  c_1$.  Let $\bv$ be a reading word of the strictly increasing
  tableau $V$.  If $b_1 = T(\al)$, then $(\bv, \bb, \bc, T(\al), \bu)$
  is a reading word for $T$, and otherwise $(\bv, \bc, \bb, T(\al),
  \bu)$ is a reading word.
\end{proof}

The following result will be used to show that $K$-theoretic jeu de
taquin slides preserve $K$-Knuth equivalence in type A.

\begin{thm}\label{thm:resolve}
  Let $T_0$ be a dotted increasing tableau of hook-closed shape.
  Assume that every dot in $T_0$ is situated to the right of an
  integer and above another integer.  Then all resolutions of $T_0$
  are $K$-Knuth equivalent.
\end{thm}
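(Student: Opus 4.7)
The plan is to induct on the number of dots where two resolutions disagree, reducing to the case where $T$ and $T'$ differ at a single dot. If $T$ and $T'$ differ at $r \geq 2$ dots, pick one disagreement dot $\al$ and let $T''$ be the resolution obtained from $T$ by flipping only the choice at $\al$; then $T''$ and $T'$ differ at $r-1$ dots, so $T'' \equiv T'$ by induction, and it remains to prove $T \equiv T''$. Equivalently, I may assume that $T_0$ has a single dot, at a box $\al$, and must show that the two resolutions $T_M$ and $T_m$ are $K$-Knuth equivalent.

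In this single-dot setting, let $a$, $b$, $c$, $d$ denote the integer values immediately north, west, east, and south of $\al$; by hypothesis $b$ and $d$ exist, while $a$ and $c$ may be absent. Set $M = \max(a,b)$ and $m = \min(c,d)$ (taken over whichever neighbors exist), so that $T_M(\al) = M \leq k < k+1 \leq m = T_m(\al)$ and the two resolutions agree elsewhere. Note that $M \in \{a,b\}$ and $m \in \{c,d\}$, so flipping from $T_M$ to $T_m$ replaces a value that duplicates one north-west neighbor of $\al$ by a value that duplicates one south-east neighbor.

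By Lemma~\ref{lem:tabclass}, I may choose any reading words of $T_M$ and $T_m$. My plan is to pick parallel reading words that first enumerate all boxes strictly south or west of $\al$ in some fixed order, then the boxes of a small local neighborhood of $\al$, then all boxes strictly north or east of $\al$, so that the two reading words agree on a common prefix and suffix and differ only in a short central segment involving $a, b, c, d$ together with $M$ or $m$. The remaining task is a case analysis on which of $a, c$ exist and on the pairwise orders of $a, b, c, d$; in each case one exhibits an explicit sequence of basic $K$-Knuth moves from Definition~\ref{defn:kknuth} carrying the central segment of $T_M$'s reading word to that of $T_m$'s. The inequalities $a, b \leq M < m \leq c, d$ ensure that the two conditional order-swap relations apply whenever one needs to shuttle $M$ or $m$ past a neighbor of intermediate value, while the idempotent relation $pp \equiv p$ and the braid $pqp \equiv qpq$ handle the introduction or removal of the duplicated value at $\al$.

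The main obstacle is controlling the proliferation of subcases: there are four possibilities for which of $a, c$ are present, further split by whether $M = a$ or $b$ and whether $m = c$ or $d$, and each configuration demands its own short sequence of moves. To keep this manageable I would invoke Lemma~\ref{lem:interval}(a) to restrict the reading words to the integer interval $[M, m]$; this strips away all entries outside this range, leaves only the boxes whose values are relevant to the local flip, and reduces the problem to a tiny local configuration where only a handful of cases remain and each can be checked in a few lines.
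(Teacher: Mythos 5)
Your high-level reduction (induct on the number of dots where the two resolutions disagree, then compare two tableaux that differ in a single box $\al$, where one carries $M=\max(a,b)$ and the other $m=\min(c,d)$) is a reasonable reorganization of the paper's induction, and since the dots of $T_0$ form an antichain the neighbors of $\al$ are genuine integers, so that part is sound. The problem is what comes next. Your claim that one can choose reading words for the two resolutions ``that agree on a common prefix and suffix and differ only in a short central segment involving $a,b,c,d$ together with $M$ or $m$'' is false, and this is exactly where the real content of the theorem lives. When the resolution puts $b$ into $\al$, condition (iii) of Definition~\ref{defn:readword} forces the box carrying $b$ to precede \emph{every} box strictly east of it in any reading word; when the other resolution puts the value of the box below $\al$ into $\al$, condition (ii) forces that box to precede every box strictly north of it. So the two reading words are forced to order the entire row to the left of $\al$ and the entire column below $\al$ (and, in one configuration, the diagonal north-east neighbor together with its row and column) in incompatible ways. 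This is why the paper's proof must manipulate the full words $\ba,\bb,\bc,\bd$ of those rows and columns, e.g.\ proving $(\bb,\bc,b_1)\equiv(\bc,\bb,c_1)$ by shuttling whole blocks past each other, and why its Case 2 has to pass through \emph{longer} words (cf.\ Remark~\ref{rmk:kknuth_difficult}). Your proposal defers precisely these chains of moves, which are the crux.

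Your proposed rescue via Lemma~\ref{lem:interval}(a) does not work: that lemma says $K$-Knuth equivalence is \emph{preserved} under restriction to an interval, i.e.\ $\bw_1\equiv\bw_2$ implies $\bw_1'\equiv\bw_2'$. You are invoking the converse — deducing equivalence of the full reading words from equivalence of their restrictions to $[M,m]$ — which is not what the lemma gives and is false in general, since the basic relations act on consecutive letters and do not lift across interleaved letters outside the interval. (The paper's own use of Lemma~\ref{lem:interval}(b) goes in the legitimate direction: one enlarges the configuration, proves equivalence there, and restricts back down.) To repair your argument you would need to (i) identify the genuinely forced differences between the two reading words, which involve whole adjacent rows and columns and the diagonal neighbor $y$ with $a_0\le y<d_0$, and (ii) supply the explicit $K$-Knuth derivations for each of the resulting cases, including the one that cannot be done without temporarily lengthening the word. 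As written, the proposal asserts the conclusion of the hard step rather than proving it.
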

\begin{proof}
  Let $T'$ and $T''$ be resolutions of $T_0$.  If $T_0$ contains no
  dots, then $T' = T''$ and there is nothing to prove.  Otherwise let
  $\al$ be the south-west most box of $T_0$ that contains a dot.  By
  Lemma~\ref{lem:interval}(b) and the conditions of the theorem, we
  may assume that $\al$ is surrounded by boxes, except that the box
  directly south-west of $\al$ may be missing.  In the following
  picture the box $\al$ is marked with a dot and the box directly
  north-east of $\al$ is labeled $y$.  The picture includes all boxes
  to the left of (and in the same rows as) $\al$ and $y$, and all
  boxes below (and in the same columns as) $\al$ and $y$.  The numbers
  $s$, $t$, $m$, $n$ are greater than or equal to one.
  \[
  \tableau{14}{
    a_s & \,\cdots & \,\cdots & a_1 & a_0 & y \\
    & b_t & \,\cdots & b_1 & \bullet & d_0 \\
    & & & & c_1 & d_1 \\
    & & & & \raisebox{6pt}{\vdots} & \raisebox{6pt}{\vdots} \\
    & & & & \raisebox{6pt}{\vdots} & d_n \\
    & & & & c_m}
  \]

  Let $U_0$ be the tableau of boxes in $T_0$ that are strictly north
  or strictly east of $\al$; this includes the row and column of $y$.
  It follows by induction on the number of dots in $T_0$ that all
  resolutions of $U_0$ are $K$-Knuth equivalent.  Since both $T'$ and
  $T''$ have reading words that terminate in words for resolutions of
  $U_0$, we can assume that $T'$ and $T''$ put the same integers in
  all boxes except $\al$.  Furthermore, if $T_0$ contains a dot in the
  same row as $y$, then we may assume that $T'$ and $T''$ replace this
  dot with the maximum of the boxes above and to the left of the dot.
  Similarly, if $T_0$ contains a dot in the same column as $y$ but
  strictly north of $y$, then we may assume that $T'$ and $T''$
  replace this dot with the minimum of the values below and to the
  right of the dot.  These assumptions imply that $a_0 \leq y < d_0$,
  and also that $T'$ and $T''$ have reading words that start with a
  reading word of the tableau of boxes strictly south-west of $\al$,
  continue with reading words of the resolutions of the displayed
  boxes, and terminate in a reading word for the boxes strictly north
  or strictly east of $y$.  Write $\ba = (a_s,\dots,a_0)$, $\bb =
  (b_t,\dots,b_1)$, $\bc' = (c_m,\dots,c_2)$, $\bc = (\bc',c_1)$,
  $\bd' = (d_n,\dots,d_2)$, and $\bd = (\bd',d_1,d_0)$.  There are
  three cases.\smallskip

  \noin{\bf Case 1:} Assume that $T'(\al)=b_1$ and $T''(\al)=c_1$.
  Then we can choose reading words of $T'$ and $T''$ such that the
  word of $T''$ is obtained from the word of $T'$ if the subword
  $(\bb, \bc, b_1)$ is replaced with $(\bc, \bb, c_1)$, so it is
  enough to show that $(\bb, \bc, b_1) \equiv (\bc, \bb, c_1)$.
  Notice that $(b_1,\bc) \equiv (\bc',b_1,c_1)$ and $(b_j,\bc,b_{j-1})
  \equiv (\bc,b_j,b_{j-1})$ for $2 \leq j \leq t$.  The result follows
  because
  \[
  \begin{split}
    (\bb,\bc,b_1) &\equiv
    (b_t,\dots,b_2,\bc',b_1,c_1,b_1) \equiv
    (b_t,\dots,b_2,\bc,b_1,c_1) \\ 
    &\equiv 
    (b_t,\dots,b_3,\bc,b_2,b_1,c_1) \equiv
    \dots \equiv
    (\bc,\bb,c_1) \,.
  \end{split}
  \]

  \noin{\bf Case 2:} Assume that $T'(\al)=b_1$ and $T''(\al)=d_0$.
  (By symmetry this case also takes care of situations with
  $T'(\al)=a_0$ and $T''(\al)=c_1$.)  We may assume that Case 1 does
  not apply, so that $d_0 < c_1$.  In this case we can choose reading
  words of $T'$ and $T''$ such that the word of $T''$ is obtained from
  the word of $T'$ if the subword $(b_1,\bc,b_1,\bd)$ is replaced with
  $(b_1,\bc,d_0,\bd)$.  These subwords are $K$-Knuth equivalent
  because
  \[
  \begin{split}
    &(b_1,\bc,b_1,\bd) 
    \equiv (\bc',b_1,c_1,b_1,\bd)
    \equiv (\bc,b_1,c_1,\bd)
    \equiv (\bc,b_1,\bd',c_1,d_1,d_0)
    \\ &\ \ \ \ \ 
    \equiv (\bc,b_1,\bd',c_1,d_0,d_1)
    \equiv (\bc,b_1,\bd',c_1,d_0,d_1,d_1)
    \equiv (\bc,b_1,\bd',c_1,d_1,d_0,d_1)
    \\ &\ \ \ \ \ 
    \equiv (\bc,b_1,c_1,\bd,d_1)
    \equiv (\bc,b_1,c_1,\bd',d_0,d_1,d_0)
    \equiv (\bc,b_1,c_1,d_0,\bd)
    \\ &\ \ \ \ \ 
    \equiv (\bc,c_1,b_1,d_0,\bd)
    \equiv (\bc,b_1,d_0,\bd)
    \equiv (b_1,\bc,d_0,\bd) \,.
  \end{split}
  \]
  
  \noin{\bf Case 3:} Assume that $T'(\al)=a_0$ and $T''(\al)=d_0$.  We
  may assume that Case 1 and Case 2 do not apply, so that $b_1 < a_0
  \leq y < d_0 < c_1$.  We can choose reading words of $T'$ and $T''$
  such that the word of $T''$ is obtained from the word of $T'$ if the
  subword $(b_1,c_1,a_0,\ba,\bd,y)$ is replaced with
  $(b_1,c_1,d_0,\ba,\bd,y)$.  If $y=a_0$ then it follows from Case 1
  that $(\ba,\bd,y) \equiv (\bd,\ba,y')$, where we set $y'=d_0$.
  Otherwise Lemma~\ref{lem:tabclass} implies that $(\ba,\bd,y) \equiv
  (\bd,\ba,y')$, where we set $y'=y$.  Notice also that Case 2 applied
  to subsets of the displayed boxes implies that $(c_1,b_1,a_0,\ba)
  \equiv (c_1,b_1,c_1,\ba)$ and $(b_1,c_1,b_1,\bd) \equiv
  (b_1,c_1,d_0,\bd)$.  We finally obtain
  \[
  \begin{split}
  (b_1,c_1,a_0,\ba,\bd,y)
  &\equiv (c_1,b_1,a_0,\ba,\bd,y)
  \equiv (c_1,b_1,c_1,\ba,\bd,y)
  \equiv (b_1,c_1,b_1,\bd,\ba,y') \\
  &\equiv (b_1,c_1,d_0,\bd,\ba,y')
  \equiv (b_1,c_1,d_0,\ba,\bd,y) \,.
  \end{split}
  \]
  This completes the proof.
\end{proof}

\begin{remark}\label{rmk:kknuth_difficult}
  In contrast to the ordinary Knuth relation, it appears to be
  difficult to determine if two given words are $K$-Knuth equivalent.
  The problem is that in order to transform one word into another
  using the basic relations of Definition~\ref{defn:kknuth}, it may be
  necessary to go through a sequence of longer words.  For example,
  this happens in Case 2 of the proof of Theorem~\ref{thm:resolve}.
  It would be interesting to know if this problem is decidable in
  general.
\end{remark}


\section{Grassmannians of type A}\label{sec:grass}

\subsection{Tableaux of type A}

In this section we work with the partially ordered set $\La = \N^2$,
where the order is given by $[r_1,c_1] \leq [r_2,c_2]$ if and only if
$r_1\leq r_2$ and $c_1\leq c_2$.  A straight shape $\la \subset \N^2$
is the same as a {\em Young diagram\/} and can be identified with the
partition $(\la_1 \geq \dots \geq \la_\ell)$ where $\ell$ is the
number of rows in $\la$ and $\la_i$ is the
number of boxes in row $i$.  Any tableau defined on a skew
shape in $\N^2$ will be called a {\em tableau of type A}.  If $X =
\Gr(m,m+k)$ is a Grassmann variety of type A, then we identify $\La_X$
with the boxes $[r,c] \in \N^2$ for which $1 \leq r \leq m$ and $1
\leq c \leq k$.  Every tableau for $\La_X$ is then a tableau of type
A.

Let $T$ and $T'$ be increasing tableaux of type A.  Then $T$ and $T'$
are jeu de taquin equivalent for $\N^2$ if and only if $T$ and $T'$
are jeu de taquin equivalent for $\La_X$ for all Grassmannians $X =
\Gr(m,m+k)$ given by sufficiently large integers $m$ and $k$.
Furthermore, an increasing tableau $U$ of straight shape is a unique
rectification target for $\N^2$ if and only if $U$ is a unique
rectification target for $\La_X$ for all Grassmannians $X$ satisfying
$\sh(U) \subset \La_X$.  In this section we study unique rectification
targets for $\N^2$.

\begin{lemma}\label{lem:resolve_A}
  Let $T$ be a dotted increasing tableau of type A.  Then all
  resolutions of $T$ are $K$-Knuth equivalent.
\end{lemma}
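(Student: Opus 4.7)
The plan is to reduce the lemma to Theorem~\ref{thm:resolve} by embedding $T$ in a larger dotted increasing tableau $T^\ast$ of straight (hence hook-closed) shape in which every dot has integer neighbors in all four cardinal directions, and then transport $K$-Knuth equivalence of resolutions back via Lemma~\ref{lem:interval}. Any type-A skew shape is automatically hook-closed, so the only obstruction to applying Theorem~\ref{thm:resolve} to $T$ directly is that some dots may lack an integer immediately to the left or immediately below.

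To construct $T^\ast$, let $\nu/\mu$ be the shape of $T$, with $\nu$ contained in a $p \times q$ rectangle, and let $[a,b]$ be an integer interval containing every non-dot value of $T$ together with the virtual dot value $k$. I would first shift $T$ by $(1,1)$ to form $T^{sh}$ on $\nu^{sh}/\mu^{sh}$, then embed this inside the straight shape $\rho = (p+2)\times(q+2)$. The ``NW region'' consisting of $\mu^{sh}$ together with row $1$ and column $1$ of $\rho$ would be filled with strictly increasing integers all smaller than $a$, and the remaining ``SE region'' with strictly increasing integers all larger than $b$; explicit choices such as $-N^2 + rN + c$ and $N^2 + rN + c$ for large $N$ realize $T^\ast$ as a dotted increasing tableau on the straight shape $\rho$. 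Since no two dots in a dotted increasing tableau can be horizontally or vertically adjacent, every dot of $T^\ast$ has integer neighbors on all four sides, so Theorem~\ref{thm:resolve} applies and all resolutions of $T^\ast$ are $K$-Knuth equivalent.

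Next I would set up a correspondence between resolutions of $T$ and resolutions of $T^\ast$. Given a resolution $T'$ of $T$, form $(T')^\ast$ dot by dot: if $T'$ uses $\max(\text{above},\text{left})$ or $\min(\text{below},\text{right})$, let $(T')^\ast$ make the same choice, noting that any padding neighbor is dominated in the $\max$ or $\min$ by the genuine neighbor; if $T'$ removes a dot (which forces the dot to be an NW or SE corner of the current shape), let $(T')^\ast$ use $\max(\text{above},\text{left})$ at an NW corner or $\min(\text{below},\text{right})$ at an SE corner, installing a padding value outside $[a,b]$ at that position. The restriction $(T')^\ast|_{[a,b]}$ then recovers the shifted $T'$: the positions discarded from $(T')^\ast$ are precisely the padding region of $T^\ast$ together with the positions of dots that $T'$ removed, and the resulting shape is again a skew Young shape because deleting an NW or SE corner of a skew shape yields a skew shape.

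For any two resolutions $T'$ and $T''$ of $T$, Theorem~\ref{thm:resolve} gives $(T')^\ast \equiv (T'')^\ast$; Lemma~\ref{lem:interval}(b) then yields $(T')^\ast|_{[a,b]} \equiv (T'')^\ast|_{[a,b]}$, and since translation of the underlying shape does not alter reading words, this forces $T' \equiv T''$. The main obstacle will be the bookkeeping in the correspondence step: one must check for each type of boundary dot (interior, edge, NW or SE corner, isolated) that the prescribed choice in $T^\ast$ really yields a valid resolution whose restriction equals the shifted $T'$, and verify the padding values at the three-region interfaces so that $T^\ast$ is indeed a dotted increasing tableau.
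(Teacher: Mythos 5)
Your proof is correct and follows essentially the same strategy as the paper's (much terser) argument: pad the skew shape with integers smaller than $a$ to the northwest and larger than $b$ to the southeast so that every dot acquires integer neighbors, apply Theorem~\ref{thm:resolve} to the padded tableau, and transport the equivalence back to the original resolutions via Lemma~\ref{lem:interval}(b). The paper compresses all of the bookkeeping you carry out into the single phrase that, using the skewness of the shape and Lemma~\ref{lem:interval}(b), ``we may assume that every dot in $T$ is situated to the right of an integer and above another integer.''
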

\begin{proof}
  Using that the shape of $T$ is a skew Young diagram and
  Lemma~\ref{lem:interval}(b), we may assume that every dot in $T$ is
  situated to the right of an integer and above another integer.  The
  lemma therefore follows from Theorem~\ref{thm:resolve}.
\end{proof}

Recall that the {\em row word\/} of a tableau $T$ is the reading word
obtained by reading the rows of $T$ from left to right, starting with
the bottom row.

\begin{thm}\label{thm:jdt_kknuth}
  Let $T$ and $T'$ be increasing tableaux of type A.  Then $T$ and
  $T'$ are $K$-Knuth equivalent if and only if $T$ and $T'$ are jeu de
  taquin equivalent for $\N^2$.  
\end{thm}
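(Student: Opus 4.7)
The plan is to prove the two implications separately, with Lemma~\ref{lem:resolve_A} providing the main technical input.

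For the direction ``jeu de taquin equivalent implies $K$-Knuth equivalent,'' it suffices to show that a single forward slide $\jdt_C(T)$ has row word $K$-Knuth equivalent to that of $T$, since forward and reverse slides are mutual inverses. I would decompose the slide into its defining sequence of swaps $\swap_{a,\bullet}$ for $a$ running through the values of $T$, and establish invariance of the $K$-Knuth class of the row word across each individual swap. For a single swap taking a dotted state $Y$ to $Y' = \swap_{a,\bullet}(Y)$, I would exhibit a dotted increasing tableau $Z$ of type A of which both $Y$ and $Y'$ arise as resolutions; the boxes of $Z$ carrying dots are exactly those that are $a$ or $\bullet$ in $Y$ and participate in the swap. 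A careful construction is required to ensure that $Z$ has no two adjacent dots of the same value, which I expect will involve a local reduction using Lemma~\ref{lem:interval}(b) to truncate to the interval in which the swap takes place. Once $Z$ is in place, Lemma~\ref{lem:resolve_A} forces $Y$ and $Y'$ to have $K$-Knuth equivalent reading words, and Lemma~\ref{lem:tabclass} then yields $K$-Knuth equivalent row words after the surviving $\bullet$'s are stripped off. Chaining the equivalences across all swaps gives $\text{row}(T)\equiv\text{row}(\jdt_C(T))$.

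For the converse direction, given $T$ and $T'$ with $K$-Knuth equivalent row words, I would rectify each by forward slides to straight-shape tableaux $U$ and $U'$. By the first direction, $U$ and $U'$ have $K$-Knuth equivalent row words, reducing the problem to showing that two straight-shape increasing tableaux with $K$-Knuth equivalent row words must lie in a common jdt class. I would approach this by induction on the total number of boxes: using Lemma~\ref{lem:interval}(b) I would truncate both row words to the interval $[1,m-1]$ where $m$ is the largest value, invoke the inductive hypothesis on the truncated tableaux, and then use explicit forward and reverse jdt slides to transport the boxes with the maximal value $m$ from one configuration to the other.

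The principal obstacle lies in this converse direction. Unlike classical Knuth equivalence, the $K$-Knuth relation admits the idempotent $(a,a)\equiv(a)$, so the number of boxes of an increasing tableau is not a $K$-Knuth invariant, and any correspondence between $K$-Knuth moves and jdt moves must reconcile this with the box-merging phenomenon of the $K$-theoretic algorithm. Moreover, as noted in Remark~\ref{rmk:kknuth_difficult}, intermediate words in a $K$-Knuth chain between $T$ and $T'$ need not themselves be row words of any increasing tableau, so one cannot straightforwardly lift the word chain to a chain of jdt moves; the rectification reduction above is designed to circumvent this, but the resulting case of two straight-shape tableaux will still require the careful inductive argument to handle the top-value boxes.
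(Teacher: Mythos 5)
Your argument for the implication ``jeu de taquin equivalent $\Rightarrow$ $K$-Knuth equivalent'' is essentially the paper's: decompose a forward slide into its constituent swaps, realize consecutive intermediate states as resolutions of a common dotted increasing tableau of type A, and invoke Lemma~\ref{lem:resolve_A}. (The paper does this cleanly by taking, for each intermediate dotted tableau $T_i$, the canonical resolution $T_i'$ that replaces every dot by the minimum of the boxes below and to the right of it, and observing that $T_i'$ is simultaneously a resolution of $T_{i+1}$; no auxiliary third tableau is needed.)

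The converse direction is where there is a genuine gap. The reduction to straight shapes is sound, but the inductive step is exactly the hard part of the theorem and is left without substance. After truncating to $[1,m-1]$, the inductive hypothesis only tells you that $U|_{[1,m-1]}$ and $U'|_{[1,m-1]}$ are jeu de taquin equivalent --- not equal, since jeu de taquin equivalent straight-shape tableaux need not coincide (Example~\ref{ex:non_urt_A}) --- and the $K$-Knuth equivalence of the full row words gives no evident control over where the value-$m$ boxes sit relative to two possibly different underlying configurations. So ``transport the boxes with the maximal value $m$'' is precisely the step that would need a new idea, and the obstacle you correctly identify (intermediate words in a $K$-Knuth chain need not be row words of any increasing tableau) is not circumvented by the rectification reduction; it resurfaces here. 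The paper's proof rests on a single observation that your plan is missing: \emph{every} word $\bu=(u_1,\dots,u_p)$ is a reading word of the increasing tableau $T(\bu)$ of antidiagonal shape $\{[p+1-j,j] \mid 1\le j\le p\}$ (any filling of an antidiagonal is increasing, as no two of its boxes are comparable); each basic $K$-Knuth relation $\bu\equiv\bu'$ is realized by explicit forward and reverse slides carrying $T(\bu)$ to $T(\bu')$; and any increasing tableau is jeu de taquin equivalent to the antidiagonal tableau of its row word. This lifts an arbitrary $K$-Knuth chain of words --- whether or not the intermediate words are row words of tableaux --- directly to a chain of jeu de taquin slides, with no induction and no rectification required.
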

\begin{proof}
  For any word $\bu = (u_1,\dots,u_p) \in \Z^p$ we define an
  increasing tableau $T(\bu)$ whose shape is the antidiagonal
  $\{[p+1-j,j] \mid 1 \leq j \leq p \}$ and whose values are given by
  $T([p+1-j,j]) = u_j$.  By examining each of the basic relations of
  Definition~\ref{defn:kknuth}, it follows that if $\bu \equiv \bu'$,
  then $T(\bu')$ can be obtained from $T(\bu)$ by a sequence of
  forward and reverse jeu de taquin slides.  Furthermore, if $\bu$ is
  the row word of an increasing tableau $S$, then $T(\bu)$ can be
  obtained from $S$ by a sequence of slides.  The implication ``only
  if'' of the theorem follows from these observations.

  For the other implication we must show that $K$-Knuth equivalence is
  preserved by arbitrary jeu de taquin slides.  Assume that $T' =
  \jdt_C(T)$ and that $T$ and $T'$ have values in the interval
  $[a,b]$.  Define a sequence of dotted increasing tableaux $T_a,
  T_{a+1}, \dots, T_{b+1}$ by $T_a = [C \to \bullet] \cup T$ and
  $T_{i+1} = \swap_{i,\bullet}(T_i)$ for $a \leq i \leq b$.  Then $T$
  is a resolution of $T_a$ and $T' = T_{b+1}|_\Z$ is a resolution of
  $T_{b+1}$.  Let $T_i'$ be the resolution of $T_i$ obtained by
  replacing each dot with the minimum of the boxes below and to the
  right of the dot.  Since $T_i'$ is also a resolution of $T_{i+1}$,
  it follows from Lemma~\ref{lem:resolve_A} that $T'_i \equiv
  T'_{i+1}$.  We deduce that $T \equiv T_a' \equiv T_{a+1}' \equiv \dots
  \equiv T_{b+1}' = T'$.
\end{proof}

\begin{cor}\label{cor:urt_kknuth}
  Let $T$ be an increasing tableau of straight shape of type A.  The
  following are equivalent.  {\rm(a)} $T$ is a unique rectification
  target for $\N^2$.  {\rm(b)} If $T'$ is any increasing tableau of
  straight shape such that $T' \equiv T$, then $T' = T$.
\end{cor}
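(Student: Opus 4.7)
The plan is to deduce this corollary as an essentially immediate consequence of Theorem~\ref{thm:jdt_kknuth} combined with Lemma~\ref{lem:urt_class}. Theorem~\ref{thm:jdt_kknuth} identifies the $K$-Knuth class of an increasing tableau of type A with its jeu de taquin class for $\N^2$, so condition (b) is equivalent to the statement that $T$ is the only straight-shape tableau in its jeu de taquin class $[T]_{\N^2}$. Once this reformulation is in place, the corollary reduces to the equivalence of (a) with the single-tableau property of the jeu de taquin class.

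For the implication (a) $\Rightarrow$ (b), I would appeal to Lemma~\ref{lem:urt_class}, which applies to any partially ordered set and asserts that a unique rectification target is the only straight-shape tableau in its jeu de taquin class. If $T'$ has straight shape and satisfies $T' \equiv T$, then Theorem~\ref{thm:jdt_kknuth} places $T'$ in $[T]_{\N^2}$, and Lemma~\ref{lem:urt_class} forces $T' = T$.

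For the converse (b) $\Rightarrow$ (a), I would argue directly from the definition of a unique rectification target. Let $T_0$ be any increasing tableau having $T$ as a rectification, and let $T'$ be any rectification of $T_0$. Then $T'$ is of straight shape, and by definition $T'$ and $T$ are both obtained from $T_0$ by sequences of forward slides, so $T, T' \in [T_0]_{\N^2}$, i.e.\ $T'$ and $T$ lie in the same jeu de taquin class. Theorem~\ref{thm:jdt_kknuth} then gives $T' \equiv T$, and hypothesis (b) forces $T' = T$. Thus $T$ admits $T$ as its only rectification, confirming that $T$ is a unique rectification target for $\N^2$.

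There is no genuine technical obstacle in this argument: the entire content has already been packaged into Theorem~\ref{thm:jdt_kknuth} and Lemma~\ref{lem:urt_class}, and the corollary merely records the combinatorial restatement of the URT property as a uniqueness condition on $K$-Knuth classes. If anywhere might require a word of care, it is the observation that ``rectification'' is defined via forward slides and therefore trivially produces an element of the full jeu de taquin class, so that the rectification-based formulation in (a) is indeed controlled by the jeu de taquin class appearing in Theorem~\ref{thm:jdt_kknuth}.
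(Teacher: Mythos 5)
Your proposal is correct and follows exactly the paper's (very terse) proof, which simply cites Theorem~\ref{thm:jdt_kknuth} and Lemma~\ref{lem:urt_class}; you have merely spelled out the two implications that the paper leaves implicit. Both directions are sound: (a)~$\Rightarrow$~(b) via Lemma~\ref{lem:urt_class} plus the theorem, and (b)~$\Rightarrow$~(a) by noting that any two rectifications of a tableau $T_0$ lie in $[T_0]_{\N^2}$ and are therefore $K$-Knuth equivalent.
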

\begin{proof}
  This follows from Theorem~\ref{thm:jdt_kknuth} and
  Lemma~\ref{lem:urt_class}.
\end{proof}

\subsection{The Hecke permutation}

Theorem~\ref{thm:jdt_kknuth} implies that the $K$-Knuth equivalence
class of an increasing tableau is an invariant under jeu de taquin
slides, in fact the finest such invariant.  However, as indicated in
Remark~\ref{rmk:kknuth_difficult}, this invariant may be difficult to
work with.  We therefore define a coarser invariant which is much
easier to understand but still able to identify the jeu de taquin
class $[U]$ for many important unique rectification targets $U$.

Let $\Sigma$ be the group of bijective maps $w : \Z \to \Z$ such that
$w(x)=x$ for all but finitely many integers $x \in \Z$.  The elements
of $\Sigma$ will be called {\em permutations}.  For any integer $i \in
\Z$ we let $s_i = (i,i+1) \in \Sigma$ be the simple transposition that
interchanges $i$ and $i+1$.  We will consider $\Sigma$ as a Coxeter
group generated by these simple transpositions.  We need the {\em
  Hecke product\/} on $\Sigma$, which can be defined as follows.  For
any permutation $u \in \Sigma$ and simple transposition $s_i$, we
define
\[
u \cdot s_i = \begin{cases}
  u s_i & \text{if $\ell(u s_i) > \ell(u)$;} \\
  u & \text{otherwise.}
\end{cases}
\]
Given an additional permutation $v \in \Sigma$, we then set
\[
u \cdot v = u \cdot s_{i_1} \cdot s_{i_2} \cdot \ldots \cdot
s_{i_\ell}
\]
where $v = s_{i_1} s_{i_2} \cdots s_{i_\ell}$ is any reduced
expression for $v$, and the simple transpositions are multiplied to
$u$ in left to right order.  This is independent of the chosen reduced
expression and defines an associative monoid product on $\Sigma$.  The
product $u \cdot v$ is called {\em reduced\/} if $\ell(u \cdot v) =
\ell(u) + \ell(v)$, which is true if and only if $u \cdot v = uv$,
i.e.\ the Hecke product agrees with the usual product in $\Sigma$.
Proofs of these facts can be found in e.g.\ \cite[\S
3]{buch.mihalcea:curve}.

Given a word $\ba = (a_1,\dots,a_k)$ of integers, define the
permutation $w(\ba) = s_{a_1} \cdot s_{a_2} \cdot \ldots \cdot
s_{a_k}$.  If $\ba$ is a reading word of an increasing tableau $T$ of
type A, then we also write $w(T) = w(\ba)$.  This permutation is
called the {\em Hecke permutation\/} of $T$ and was used in
\cite{buch.kresch.ea:stable} for increasing tableaux of straight
shape.  We will show that this permutation is independent of the
chosen reading word and invariant under jeu de taquin slides.  This is
done by noting that the relations of the Hecke monoid are a weakening
of the $K$-Knuth relations.

\begin{defn}\label{defn:heckerel}
  Define the {\em Hecke equivalence relation\/} on words of integers,
  denoted $\approx$, to be the symmetric transitive closure of the
  following basic relations.  For any words $\bu$ and $\bv$, and
  integers $a,b$ we set
  \[
  \begin{split}
    (\bu,a,a,\bv) &\approx (\bu,a,\bv) \,, \\
    (\bu,a,b,a,\bv) &\approx (\bu,b,a,b,\bv) \,, \\
    (\bu,a,b,\bv) &\approx (\bu,b,a,\bv) \ \ \ \ \ \text{if $|a-b| \geq 2$.}
  \end{split}
  \]
\end{defn}

This relation governs the Hecke monoid in the sense that $\bu \approx
\bu'$ if and only if $w(\bu) = w(\bu')$ for any words of integers
$\bu$ and $\bu'$.  On the other hand, notice that the $K$-Knuth
relation $\bu \equiv \bu'$ implies the Hecke relation $\bu \approx
\bu'$.  In fact, the two last basic relations of
Definition~\ref{defn:kknuth} are both special cases of the last
relation given in Definition~\ref{defn:heckerel}.  We record the
following consequence of Theorem~\ref{thm:jdt_kknuth}.

\begin{cor}\label{cor:jdt_hecke}
  The Hecke permutation $w(T)$ of an increasing tableau of type A is
  invariant under jeu de taquin slides.
\end{cor}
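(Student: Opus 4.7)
The plan is to chain together three facts already in place: Theorem~\ref{thm:jdt_kknuth} (jeu de taquin equivalence $\Leftrightarrow$ $K$-Knuth equivalence), the observation that $K$-Knuth equivalence refines Hecke equivalence, and the characterization of Hecke equivalence as $\bu \approx \bu' \Leftrightarrow w(\bu) = w(\bu')$. No new combinatorics should be required.

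First, I would note that the Hecke permutation $w(T)$ is well defined, independent of the chosen reading word: by Lemma~\ref{lem:tabclass} any two reading words of $T$ are $K$-Knuth equivalent, and since each basic $K$-Knuth move (Definition~\ref{defn:kknuth}) is a basic Hecke move (Definition~\ref{defn:heckerel}) — the idempotence and braid relations are identical, while in both of the remaining $K$-Knuth moves the two outer letters $b,c$ satisfy $|b-c|\ge 2$ because an integer $a$ strictly separates them — any two reading words are Hecke equivalent, hence give the same permutation under $w$.

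Next, suppose $T$ and $T'$ are jeu de taquin equivalent increasing tableaux of type A. By Theorem~\ref{thm:jdt_kknuth}, any reading word of $T$ is $K$-Knuth equivalent to any reading word of $T'$. The same observation as above upgrades this to a Hecke equivalence between the two reading words. Since $\bu \approx \bu'$ exactly when $w(\bu)=w(\bu')$ (recalled in the paragraph preceding the corollary), we conclude $w(T) = w(T')$.

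I do not expect any step to be a genuine obstacle: the only thing one has to actually check by hand is the observation that the two ``order-sensitive'' basic moves of $K$-Knuth equivalence sit inside the commutation move of the Hecke relation, and that is immediate from the constraints $b<a<c$ or $a<c<b$, which both force $|b-c|\ge 2$. Everything else is a direct invocation of a previously established result.
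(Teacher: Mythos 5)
Your proof is correct and is essentially the paper's own argument: the paper likewise obtains the corollary directly from Theorem~\ref{thm:jdt_kknuth} together with the observation that the last two basic $K$-Knuth relations of Definition~\ref{defn:kknuth} are special cases of the commutation relation in Definition~\ref{defn:heckerel}, so that $\equiv$ refines $\approx$ and hence preserves $w(\cdot)$. (One cosmetic slip: in the relation $(\bu,a,b,c,\bv)\equiv(\bu,b,a,c,\bv)$ with $a<c<b$ the transposed letters are $a$ and $b$, so the relevant inequality is $|a-b|\ge 2$ rather than $|b-c|\ge 2$; your stated reason --- that the third letter strictly separates the two being swapped --- is exactly the right one.)
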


\begin{thm}\label{thm:mininc_urt_A}
  Let $\la$ be a Young diagram.  Then the minimal increasing tableau
  $M_\la$ is a unique rectification target for $\N^2$.  Furthermore,
  an increasing tableau $T$ of type A rectifies to $M_\la$ if and only
  if $w(T) = w(M_\la)$.
\end{thm}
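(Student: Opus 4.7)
The plan is to split the theorem into two implications plus a Key Claim, and then prove the Key Claim by induction.

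\medskip

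The forward implication of the characterization, that $T$ rectifying to $M_\la$ gives $w(T) = w(M_\la)$, is immediate from Corollary~\ref{cor:jdt_hecke}, which asserts that the Hecke permutation is a jeu de taquin invariant. Both the unique-rectification-target property and the backward implication will be reduced to the following Key Claim: \emph{every increasing tableau $T^*$ of straight shape with $w(T^*) = w(M_\la)$ must equal $M_\la$.} Granting the Key Claim, if $w(T) = w(M_\la)$ and $T^*$ is any rectification of $T$ (existence by iterating slides), then $w(T^*) = w(T) = w(M_\la)$ by Corollary~\ref{cor:jdt_hecke}, hence $T^* = M_\la$; this proves the backward implication and simultaneously shows $M_\la$ is the only possible rectification, so it is a URT.

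\medskip

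To prove the Key Claim I would induct on $|\la|$, the empty case being vacuous. Let $T^*$ have straight shape $\mu$ with $w(T^*) = w(M_\la)$. I would first verify that the row word of $M_\la$, read bottom-up and left-to-right as $(\ell,\ldots,\ell+\la_\ell-1,\; \ldots,\; 1,\ldots,\la_1)$, is a reduced word of length $|\la|$. This uses that, viewing $M_\la$ inside a sufficiently large Grassmannian $\Gr(m,m+k)$, $w(M_\la)$ coincides with the Weyl-group element $w_\la \in W^P$ from Definition~\ref{def:shape2wge}, which is fully commutative by the discussion at the end of Section~\ref{sec:comin}. Consequently $\ell(w(M_\la)) = |\la|$, so $|\mu| \geq |\la|$, and any two reduced expressions for $w(M_\la)$ are connected by commutation of non-adjacent generators only.

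\medskip

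Next I would pin down the entries of $T^*$ cell by cell. Since $w(M_\la)(1) = \la_1+1 \neq 1$ (for $\la \neq \emptyset$), the letter $1$ must appear in every word whose Hecke product is $w(M_\la)$, so $1$ appears in $T^*$; the strict-increase conditions force $T^*[1,1] = 1$. Iterating this argument, using the full commutativity to track which simple transposition is contributed by each box, the top row of $T^*$ is pinned down to be $(1,2,\ldots,\la_1)$, and $\mu_1 = \la_1$. Stripping off this top row yields a straight-shape increasing tableau of shape $(\mu_2,\ldots)$ with entries $\geq 2$; after shifting entries down by $1$, its Hecke permutation becomes $w(M_{(\la_2,\ldots,\la_\ell)})$, and the inductive hypothesis identifies it with $M_{(\la_2,\ldots,\la_\ell)}$. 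Reassembling gives $T^* = M_\la$.

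\medskip

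The main obstacle is that Hecke equivalence is strictly coarser than $K$-Knuth equivalence, so \emph{a priori} many tableaux $T^*$ of various shapes could give the Hecke permutation $w(M_\la)$. The essential structural input is the full commutativity of $w(M_\la)$; a particularly delicate substep is showing that the row word of $T^*$ is actually reduced (that is, $|\mu| = |\la|$) rather than containing redundant letters absorbed by the Hecke product. Once reducedness is established, the fully commutative structure together with the strict-increase constraints tightly controls which letters can appear at which positions, and the cell-by-cell identification above completes the induction.
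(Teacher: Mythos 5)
Your reduction is exactly the paper's: by Corollary~\ref{cor:jdt_hecke} everything follows once one shows that an increasing tableau of straight shape whose Hecke permutation equals $w(M_\la)$ must be $M_\la$. The problem is in how you propose to prove that claim. Its ``essential structural input,'' the assertion that $w(M_\la)$ coincides with the fully commutative element $w_\la\in W^P$, is false. Take $\la=(2,1)$: the row word of $M_\la$ is $(2,1,2)$, so $w(M_\la)=s_2\cdot s_1\cdot s_2=s_2s_1s_2$, the longest element of $S_3$. This permutation contains the pattern $321$, is not fully commutative, and has the two reduced words $s_2s_1s_2$ and $s_1s_2s_1$, which are not related by commutations of distant generators. (The paper itself implicitly records that $w_\la$ and $w(M_\la)^{-1}$ are in general different permutations, in the remark following Corollary~\ref{cor:sgroth_prod}.) So the mechanism you rely on --- connecting reduced expressions by commutations alone and ``tracking which simple transposition is contributed by each box'' --- is not available. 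In addition, the substep you yourself flag as delicate, that the row word of $T^*$ is reduced (equivalently $|\mu|=|\la|$), is left without any argument; it is a genuine issue, since Hecke-equivalent straight-shape increasing tableaux can have different numbers of boxes (see the two jeu de taquin equivalent tableaux of shapes $(3,1,1)$ and $(3,2,1)$ in the remark after Theorem~\ref{thm:fathook}).

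The paper's proof avoids both difficulties. It never establishes reducedness of the row word of $T$ up front and never invokes full commutativity. Instead it observes that $w^{-1}(1)=\la_1+1$, which forces the first row of $T$ to begin with $1,2,\dots,\la_1$ and any further boxes in that row to be $\geq\la_1+2$. It then removes exactly those $\la_1$ boxes, rectifies the remaining skew tableau to $T'$, and uses the factorizations $w(\ov{M_\la})\cdot u=w(T')\cdot u$ with $u=s_1s_2\cdots s_{\la_1}$: since $\ov{M_\la}$ and $T'$ only involve letters $\geq 2$, these Hecke products are length-additive, so one may cancel $u$ and conclude $w(T')=w(\ov{M_\la})$, completing an induction on the number of rows. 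If you want to salvage your cell-by-cell approach, you would need to replace the full-commutativity argument with something like this cancellation step; as written, the induction does not go through.
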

\begin{proof}
  In view of Corollary~\ref{cor:jdt_hecke} it is enough to show that,
  if $T$ is any increasing tableau of straight shape such that $w(T) =
  w(M_\la)$, then $T = M_\la$.

  Set $w = w(T) = w(M_\la)$.  Using that $w$ is the Hecke permutation
  defined by the row word of $M_\la$, we obtain that $w^{-1}(1) =
  \la_1+1$.  Since $w$ is also the Hecke permutation given by the row
  word of $T$, this implies that the first row of $T$ must start with
  the integers from $1$ to $\la_1$, and if this row has more than
  $\la_1$ boxes, then the remaining boxes are greater than or equal to
  $\la_1+2$.

  Let $\ov\la = (\la_2,\la_3,\dots,\la_{\ell(\la)})$ be the Young
  diagram obtained by removing the first row of $\la$ and let
  $\ov{M_\la}$ be the tableau obtained by removing the first row of
  $M_\la$.  Let $\ov{T}$ denote the {\em skew\/} tableau obtained by
  removing the first $\la_1$ boxes from the top row of $T$, and let
  $T'$ be a rectification of $\ov{T}$.  We also set $u = s_1 s_2
  \cdots s_{\la_1} \in \Sigma$.  Then we have
  \[
  w(\ov{M_\la}) \cdot u = w(M_\la) = w(T) = w(\ov{T}) \cdot u = w(T')
  \cdot u \,.
  \]
  Furthermore, since all boxes of $\ov{M_\la}$ and $T'$ are greater
  than or equal to two, it follows that the Hecke products are
  reduced, hence $w(\ov{M_\la}) = w(T')$.  By induction on $\ell(\la)$
  this implies that $T' = \ov{M_\la}$.  Since all boxes in the top row
  of $\ov{M_\la}$ are strictly smaller than $\la_1+2$, we deduce that
  the top row of $T$ contains exactly $\la_1$ boxes, hence $\ov{T} =
  T' = \ov{M_\la}$.  We conclude that $T = M_\la$.
\end{proof}

Notice that Theorem~\ref{thm:mininc_urt_A} provides yet another
formulation of the Littlewood-Richardson rule for the $K$-theory of
Grassmannians.  The structure constant $c^\nu_{\la,\mu}$ is equal to
the number of increasing tableaux $T$ of shape $\nu/\la$ for which
$w(T) = w(M_\mu)$.

\subsection{The Pieri rule for $\Gamma(\N^2)$}

The following result shows that multiplication with a single-row
partition in the combinatorial $K$-theory ring $\Gamma(\N^2)$ is
compatible with Lenart's Pieri rule for the $K$-theory of
Grassmannians \cite{lenart:combinatorial}.  A {\em horizontal strip\/}
is a skew shape in $\N^2$ that contains at most one box in each
column.

\begin{cor}\label{cor:pieri_A}
  Let $\la$ be a Young diagram and let $p$ be a positive integer.
  Then we have
  \[
  G_{(p)} \cdot G_\la = \sum_\nu \binom{r(\nu/\la)-1}{|\nu/\la|-p} \,
  G_\nu \,,
  \]
  in the ring $\Gamma(\N^2)$, where the sum is over all partitions
  $\nu \supset \la$ such that $\nu/\la$ is a horizontal strip, and
  $r(\nu/\la)$ is the number of non-empty rows of $\nu/\la$.
\end{cor}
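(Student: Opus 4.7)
The plan is to combine Corollary~\ref{cor:lrrule} and Theorem~\ref{thm:mininc_urt_A}: the structure constant $c^\nu_{(p),\la}$ is the number of increasing tableaux $T$ of shape $\nu/\la$ whose Hecke permutation is $w(T) = w(M_{(p)}) = s_1 s_2 \cdots s_p$. I will count these tableaux directly and verify the count equals $\binom{r(\nu/\la)-1}{|\nu/\la|-p}$ when $\nu/\la$ is a horizontal strip, and $0$ otherwise.

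The heart of the argument is a structural characterization. I claim that $T$ satisfies $w(T) = s_1 s_2 \cdots s_p$ if and only if $\nu/\la$ is a horizontal strip and, reading the rows of $T$ from bottom to top with contents $[\alpha_1,\beta_1],\dots,[\alpha_r,\beta_r]$, each row is a block of consecutive integers (so row $i$ contains $\beta_i-\alpha_i+1$ boxes), $\alpha_1 = 1$, $\beta_r = p$, and $\alpha_{i+1} \in \{\beta_i,\beta_i+1\}$ for $1 \le i \le r-1$. The ``if'' direction follows by induction on the number of rows: the product $s_a s_{a+1} \cdots s_b$ is the cyclic chain permutation of $[a,b+1]$, and juxtaposing two such chains satisfying $\alpha_{i+1}\in\{\beta_i,\beta_i+1\}$ again produces a single chain, using the collapsing relation $s_i \cdot s_i = s_i$ in the overlap case. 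Thus the partial Hecke product after reading the first $j$ rows is $s_1 s_2 \cdots s_{\beta_j}$, and the full product is $s_1 \cdots s_p$.

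The main obstacle is the ``only if'' direction. My approach is a case analysis: any entry outside $[1,p]$, any non-consecutive jump within a row, or any column of height $\ge 2$ in $\nu/\la$ must force $w(T) \ne s_1 s_2 \cdots s_p$. This relies on the fact that $s_1 \cdots s_p$ is a fully commutative (321-avoiding) permutation, so its Hecke words have a rigid structure: no braid move is ever needed to reduce the product, and the support is confined to $\{s_1,\dots,s_p\}$. I will handle the column-of-height-$\ge 2$ case by showing that the row-word fragment $(\dots,c,\dots,a,\dots)$ arising from a vertical pair with lower entry $c>a$ preceding upper entry $a$ produces an inversion incompatible with the successively-extending chain structure established in the ``if'' direction.

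Given the structural characterization, the enumeration is elementary. Encoding the choices $\alpha_{i+1}\in\{\beta_i,\beta_i+1\}$ by overlap indicators $o_i\in\{0,1\}$ (with $o_i=1$ when $\alpha_{i+1}=\beta_i$) and telescoping gives $\beta_r = |\nu/\la| - \sum_{i=1}^{r-1} o_i$, so the constraint $\beta_r = p$ forces $\sum o_i = |\nu/\la|-p$. The number of admissible overlap vectors is then $\binom{r-1}{|\nu/\la|-p}$, matching the claimed coefficient.
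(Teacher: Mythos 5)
Your proposal is correct and follows essentially the same route as the paper's proof: both reduce via Corollary~\ref{cor:lrrule} and Theorem~\ref{thm:mininc_urt_A} to counting increasing tableaux $T$ of shape $\nu/\la$ with $w(T)=s_1s_2\cdots s_p$, characterize these as exactly the tableaux whose row word is weakly increasing with value set $\{1,\dots,p\}$ (your consecutive-block description with overlaps $\alpha_{i+1}\in\{\beta_i,\beta_i+1\}$ is an explicit restatement of this), and then count the overlap choices to obtain $\binom{r(\nu/\la)-1}{|\nu/\la|-p}$. The ``only if'' direction that you flag as the main obstacle is simply asserted in the paper, and your full-commutativity strategy does close it --- each partial Demazure product of a Hecke word for $s_1\cdots s_p$ must stay in the Boolean interval $[e,s_1\cdots s_p]$, so a letter $m$ can appear only while $m+1$ is absent from the prefix, forcing the word to be weakly increasing --- though one can also get this more quickly from Corollary~\ref{cor:lis}, since $\lds(T)=\lds(M_{(p)})=1$.
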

\begin{proof}
  The coefficient of $G_\nu$ in the product $G_{(p)}\cdot G_\la$ is
  equal to the number of increasing tableaux $T$ of shape $\nu/\la$
  for which $w(T) = s_1 s_2 \cdots s_p$.  The condition $w(T) = s_1
  s_2 \cdots s_p$ holds if and only if the row word of $T$ is weakly
  increasing and contains exactly the integers $\{1,2,\dots,p\}$.
  This implies that $\nu/\la$ must be a horizontal strip, in which
  case there are exactly $\binom{r(\nu/\la)-1}{|\nu/\la|-p}$ choices
  for $T$.
\end{proof}

\subsection{Numerical invariants}

We next derive two additional invariants from the $K$-Knuth class of
an increasing tableau.  These invariants were also obtained in
\cite{thomas.yong:jeu} with different methods.  If $\bu =
(u_1,\dots,u_\ell)$ is a word of integers, then a {\em subsequence\/}
of $\bu$ is any word of the form $(u_{i_1},\dots,u_{i_k})$ for indices
$i_1 < \dots < i_k$.  We let $\lis(\bu)$ denote the length of the
longest strictly increasing subsequence of $\bu$, and let $\lds(\bu)$
be the length of the longest strictly decreasing subsequence.  It
follows by inspection of the basic relations of
Definition~\ref{defn:kknuth} that, if $\bu$ and $\bu'$ are $K$-Knuth
equivalent words of integers, then $\lis(\bu) = \lis(\bu')$ and
$\lds(\bu) = \lds(\bu')$.  If $\bu$ is a reading word of an increasing
tableau $T$ of type A, we can therefore write $\lis(T) = \lis(\bu)$
and $\lds(T) = \lds(\bu)$ without ambiguity.  Notice that if $T$ has
straight shape, then $\lis(T)$ is equal to the number of columns in
$T$, and $\lds(T)$ is the number of rows in $T$.  The following result
follows from Theorem~\ref{thm:jdt_kknuth}.

\begin{cor}[Thomas and Yong]\label{cor:lis}
  Let $T$ be an increasing tableau of type A.  Then $\lis(T)$ and
  $\lds(T)$ are invariant under jeu de taquin slides.
\end{cor}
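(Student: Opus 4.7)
The plan is to reduce to the statement already highlighted in the paragraph preceding the corollary: namely, that $\lis$ and $\lds$ are preserved by each of the four basic relations defining $K$-Knuth equivalence. Once that is established, Theorem~\ref{thm:jdt_kknuth} finishes the job, since it identifies jeu de taquin equivalence of increasing tableaux of type A with $K$-Knuth equivalence of their row words.

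So the first step is to verify directly from Definition~\ref{defn:kknuth} that $\lis$ and $\lds$ are invariants on words. The idempotent relation $(\bu,a,a,\bv) \equiv (\bu,a,\bv)$ is immediate because a strictly monotone subsequence can use at most one copy of $a$, so the two available copies contribute no more than one copy does. For the braid-like relation $(\bu,a,b,a,\bv) \equiv (\bu,b,a,b,\bv)$, one checks by cases on whether $a<b$ or $a>b$: in either case, the multiset of strictly monotone patterns that a subsequence can extract from the three middle letters is the same on both sides (for example if $a<b$, both $aba$ and $bab$ admit exactly the strictly increasing pattern $(a,b)$ using the leftmost valid pair), and these patterns can be freely prepended/appended to material from $\bu$ and $\bv$. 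The two remaining relations, $(\bu,a,b,c,\bv)\equiv(\bu,a,c,b,\bv)$ with $b<a<c$ and $(\bu,a,b,c,\bv)\equiv(\bu,b,a,c,\bv)$ with $a<c<b$, are Knuth-type moves restricted to strictly monotone subsequences: since the middle triple is $a,b,c$ with one strict inequality forcing which pairs can appear together, one checks case by case that the possible strictly monotone subsequences contributed by the three middle positions are unchanged when the swap is performed.

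The second step is essentially a quotation: by Theorem~\ref{thm:jdt_kknuth}, if $T$ and $T'$ are jeu de taquin equivalent increasing tableaux of type A, then any reading word $\bu$ of $T$ is $K$-Knuth equivalent to any reading word $\bu'$ of $T'$. Combining with step one yields $\lis(\bu)=\lis(\bu')$ and $\lds(\bu)=\lds(\bu')$, so $\lis(T)$ and $\lds(T)$ are well defined (independent of the choice of reading word, in particular the row word) and are jeu de taquin invariants.

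The only slightly delicate point is the case analysis for the $aba \leftrightarrow bab$ move, since this relation changes the multiset of letters appearing in the word. The way to handle this cleanly is to observe that a strictly monotone subsequence of $(\bu,a,b,a,\bv)$ selects at most one occurrence of $a$ and at most one of $b$ from the middle block, hence the set of monotone patterns extracted from the middle is a subset of $\{(a),(b),(a,b),(b,a)\}$, and exactly the same patterns are available from the middle of $(\bu,b,a,b,\bv)$. Since all other steps are purely formal, this is where the small amount of real combinatorial content sits, and it is routine.
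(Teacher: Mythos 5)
Your proof follows the paper's argument exactly: the paragraph preceding the corollary asserts precisely that $\lis$ and $\lds$ are preserved by each basic relation of Definition~\ref{defn:kknuth}, and the corollary is then deduced from Theorem~\ref{thm:jdt_kknuth}. One small caveat on your case analysis: for the relation $(\bu,a,b,c,\bv)\equiv(\bu,a,c,b,\bv)$ with $b<a<c$ (and its companion) the set of strictly monotone subsequences of the middle triple is \emph{not} literally unchanged --- the left side admits the increasing pair $(b,c)$ while the right side admits the decreasing pair $(c,b)$ instead --- but any monotone subsequence of the full word using the lost pair can be replaced by one of equal length passing through $a$ (since $b<a<c$), so the equality of $\lis$ and $\lds$ still holds.
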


We now prove a generalization of Thomas and Yong's result that
superstandard tableaux of type A are unique rectification targets.  By
a {\em fat hook\/} we will mean a partition of the form $(a^b,c^d)$
where $a,b,c,d$ are non-negative integers with $a \geq c$.  For
example:
\[
(7^3,2^2) = (7,7,7,2,2) \ = \ 
\tableau{8}{{}&{}&{}&{}&{}&{}&{}\\{}&{}&{}&{}&{}&{}&{}\\
{}&{}&{}&{}&{}&{}&{}\\{}&{}\\{}&{}} \ .
\]
Let $\la = (a^b,c^d)$ be a fat hook, let $M_\la$ be the corresponding
minimal increasing tableau, and let $U$ be an increasing tableau of
straight shape.  We will say that $U$ {\em fits in the corner of
  $M_\la$\/} if $U$ has at most $d$ rows, at most $a-c$ columns, and
all integers contained in $U$ are strictly larger than the integers
contained in $M_\la$.  In this case we let $M_\la \cup U$ denote the
increasing tableau obtained by attaching $U$ to the corner of $M_\la$.
\[
M_\la \cup U \ = \ \ 
\tableau{8}{
[lt]&[t]&{}&{}&{}&{}&{}&[tr]\\ 
[l]&&&[b]&{}&{}&{}&[br]\\
[l]&[]M_\la&[r]&[lt]&[t]&{}&[hr]\\
[l]&&[r]&[l]&[]U&[br]\\
[lb]&[b]&[br]&[lb]&[br]
}\medskip
\]

\begin{thm}\label{thm:fathook}
  Let $\la$ be a fat hook, and let $U$ be any unique rectification
  target that fits in the corner of $M_\la$.  Then $M_\la \cup U$ is a
  unique rectification target.
\end{thm}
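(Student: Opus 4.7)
The plan is to invoke Corollary~\ref{cor:urt_kknuth}, which reduces the claim to showing that every increasing tableau $T'$ of straight shape in $\N^2$ with $T'\equiv M_\la\cup U$ must equal $M_\la\cup U$. Fix such a $T'$, write $\la=(a^b,c^d)$, and let $N$ be the largest integer appearing in $M_\la$. Since $U$ fits in the corner of $M_\la$, every entry of $U$ exceeds $N$.

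First I would pin down the part of $T'$ with values at most $N$. By Lemma~\ref{lem:interval}(b), $T'|_{[-\infty,N]}\equiv (M_\la\cup U)|_{[-\infty,N]} = M_\la$. The restriction $T'|_{[-\infty,N]}$ is itself of straight shape, being a lower order ideal of $\sh(T')$. Since $M_\la$ is a unique rectification target by Theorem~\ref{thm:mininc_urt_A}, Corollary~\ref{cor:urt_kknuth} forces $T'|_{[-\infty,N]}=M_\la$. Consequently $\sh(T')$ contains $\la$, and $T'$ restricted to $\la$ equals $M_\la$.

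Next I would determine the overall shape of $T'$ using the invariants of Corollary~\ref{cor:lis}. For an increasing tableau of type A of straight shape, $\lis$ equals the number of columns and $\lds$ equals the number of rows (the top row gives a maximal strictly increasing subsequence of the row word, and the leftmost column a maximal strictly decreasing one). Thus $T'$ has the same width and height as $M_\la\cup U$. Combined with $\sh(T')\supset\la$ and the two-block staircase shape of the fat hook $\la=(a^b,c^d)$, this forces every box of $\sh(T')\setminus\la$ to lie in the rectangular corner $\{[r,j] : b<r\le b+d,\ c<j\le a\}$ where $U$ sits inside $M_\la\cup U$. Hence $T'|_{[N+1,\infty]}$ is an increasing tableau whose shape is a Young diagram translated so that its north-west box lies at $[b+1,c+1]$.

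To conclude, I would apply Lemma~\ref{lem:interval}(b) once more to obtain $T'|_{[N+1,\infty]}\equiv U$, and then translate both tableaux so that their north-west boxes sit at $[1,1]$; this leaves the row words unchanged and produces two straight-shape increasing tableaux of type A with $K$-Knuth equivalent row words, one of which is $U$. Since $U$ is a URT, Corollary~\ref{cor:urt_kknuth} forces equality, so after translating back $T'|_{[N+1,\infty]}$ coincides with $U$ placed in the corner, and combined with the first step this gives $T'=M_\la\cup U$. The hard part of the plan is the shape determination above: $\lis$, $\lds$, and the containment $T'\supset\la$ usually do not suffice to pin down a Young diagram, but the fat hook's rigidity — its top block having width equal to the overall width and its two blocks together having height equal to the overall height — is precisely what confines $\sh(T')\setminus\la$ to the single corner where $U$ sits, which is why the hypothesis is stated for fat hooks.
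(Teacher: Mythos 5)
Your proposal is correct and follows essentially the same route as the paper's proof: pin down $T'|_{[1,N]}=M_\la$ using Theorem~\ref{thm:mininc_urt_A} together with the interval-restriction lemma, use the $\lis$/$\lds$ invariants of Corollary~\ref{cor:lis} to force the remaining boxes into the corner rectangle, and then conclude from the URT hypothesis on $U$. The only cosmetic difference is that you phrase the argument via $K$-Knuth equivalence (Corollary~\ref{cor:urt_kknuth} and Lemma~\ref{lem:interval}) where the paper works directly with jeu de taquin equivalence (Lemma~\ref{lem:jdt_interval}), and you spell out the shape-determination step that the paper leaves terse.
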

\begin{proof}
  Let $T$ be a tableau of straight shape that is jeu de taquin
  equivalent to $M_\la \cup U$, and let $a$ be the largest integer
  contained in $M_\la$.  Then it follows from
  Theorem~\ref{thm:mininc_urt_A} and Lemma~\ref{lem:jdt_interval}
  that $T|_{[1,a]} = M_\la$, after which Corollary~\ref{cor:lis}
  implies that $T = M_\la \cup U'$ for some increasing tableau $U'$
  that fits in the corner of $M_\la$.  Since
  Lemma~\ref{lem:jdt_interval} shows that $U'$ is jeu de taquin
  equivalent to $U$, the assumption that $U$ is a unique rectification
  target implies that $U' = U$, as required.
\end{proof}

\begin{cor}[Thomas and Yong]
  Let $\la$ be a Young diagram.  Then the superstandard tableaux
  $S_\la$ and $\wh S_\la$ of shape $\la$ are unique rectification
  targets.
\end{cor}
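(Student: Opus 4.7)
The plan is to prove that $S_\la$ is a unique rectification target by induction on $\ell(\la)$, invoking Theorem~\ref{thm:fathook} at each step, and then to deduce the statement for $\wh S_\la$ from the transposition symmetry of the poset $\N^2$.

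For the base case $\ell(\la) \leq 1$ the tableau $S_\la$ coincides with the minimal increasing tableau $M_\la$, so Theorem~\ref{thm:mininc_urt_A} applies. For the inductive step with $\ell := \ell(\la) \geq 2$, set $\la' = (\la_2, \ldots, \la_\ell)$ and regard the single-row partition $(\la_1)$ as the fat hook with parameters $(a, b, c, d) = (\la_1, 1, 0, \ell - 1)$. Then $M_{(\la_1)} = (1, 2, \ldots, \la_1)$ is exactly the top row of $S_\la$, and the corner of this fat hook is the $(\ell - 1) \times \la_1$ rectangle sitting directly below. I would let $U$ be the tableau obtained from $S_{\la'}$ by adding $\la_1$ to every entry; then $U$ has shape $\la'$ which fits in the corner, every entry of $U$ strictly exceeds $\la_1$, and $U$ is a unique rectification target by the inductive hypothesis, since adding a constant to all entries preserves the jeu de taquin structure. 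By construction $M_{(\la_1)} \cup U = S_\la$, so Theorem~\ref{thm:fathook} finishes the induction.

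For $\wh S_\la$ I would observe that the map $[i,j] \mapsto [j,i]$ is an involutive automorphism of the poset $\N^2$. The induced transpose operation on increasing tableaux preserves neighbors in the poset and hence commutes with the swap operations used to define jeu de taquin, so transposition preserves the property of being a unique rectification target. A direct entry comparison shows $S_\la^t = \wh S_{\la^t}$, so applying the already-proved case to $\la^t$ and transposing yields the statement for $\wh S_\la$. The key move throughout is identifying the fat-hook decomposition $S_\la = M_{(\la_1)} \cup U$; the interpretation of the parameters $(\la_1, 1, 0, \ell - 1)$ is slightly delicate because, as a partition, this fat hook is just the single row $(\la_1)$, and it is the parameter $d = \ell - 1$ alone which creates the corner into which $U$ can be placed.
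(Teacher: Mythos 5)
Your argument is correct and is exactly the deduction the paper intends but leaves implicit: peel off the top row of $S_\la$ as the fat hook $M_{(\la_1)}$ with $(a,b,c,d)=(\la_1,1,0,\ell-1)$, apply Theorem~\ref{thm:fathook} inductively (translation of entries being harmless), and obtain the $\wh S_\la$ case from $\wh S_\la = (S_{\la^t})^t$ together with the fact that transposition is a poset automorphism of $\N^2$ commuting with jeu de taquin. The transposition step is genuinely needed for $\wh S_\la$, since a single column is a fat hook with a degenerate corner, and you have supplied it correctly.
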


\begin{remark}
  It is tempting to look for generalizations of
  Theorem~\ref{thm:fathook}.  However, many natural generalizing
  statements are ruled out by the fact that the following two tableaux
  \[
  \tableau{10}{1&2&3\\2\\4} \ \ \equiv \ \ 
  \tableau{10}{1&2&3\\2&4\\4}\medskip
  \]
  are jeu de taquin equivalent, and therefore the first tableau is not
  a unique rectification target.
\end{remark}

\subsection{Stable Grothendieck polynomials}

Our methods can be applied to obtain a simple formula for the product
of any stable Grothendieck polynomial with a stable Grothendieck
polynomial given by a partition.  Let $S_n \subset \Sigma$ be the
subgroup of permutations of the integer interval $[1,n]$.  For each $w
\in S_n$, define a {\em Grothendieck polynomial\/} $\groth_w =
\groth_w(x_1,\dots,x_{n-1})$ as follows.  If $w = w_0 \in S_n$ is the
longest permutation, then we set $\groth_{w_0} = \prod_{i=1}^{n-1}
x_i^{n-i}$.  Otherwise we set
\[
\groth_w = \frac{(1+x_{i+1}) \groth_{w
    s_i}(x_1,\dots,x_n) - (1+x_i) \groth_{w
    s_i}(x_1,\dots,x_{i+1},x_i,\dots,x_n)}{x_i - x_{i+1}}
\]
for any $i$ such that $w(i) < w(i+1)$.  

Given any permutation $w \in \Sigma$ and $m \in \Z$, the {\em shifted
  permutation\/} $1^m\times w \in \Sigma$ is defined by $(1^m\times
w)(i) := w(i-m)+m$ for $i \in \Z$.  Notice that for all sufficiently
large integers $m$ we have $1^m \times w \in S_{2m}$, hence the
polynomial $\groth_{1^m\times w}$ is defined.  The {\em stable
  Grothendieck polynomial\/} for $w$ is the power series in infinitely
many variables obtained as the limit
\[
G_w \ = \ \lim_{m \to \infty} \groth_{1^m \times w} \ \in \
\Z\llbracket x_1, x_2, \ldots \rrbracket \,.
\]

The polynomials $(-1)^{\ell(w)} \groth_w(-x_1,\dots,-x_{n-1})$ were
introduced by Lascoux and Sch\"utzenberger as representatives for the
Schubert classes in the $K$-theory of the flag manifold $\Fl(\C^n)$
\cite{lascoux.schutzenberger:structure}, and stabilizations of these
polynomials were studied by Fomin and Kirillov in
\cite{fomin.kirillov:grothendieck}.  We have deviated from the
original definitions to ensure that the structure constants for
products of Grothendieck polynomials are non-negative
\cite{brion:positivity}.

If $\la = (\la_1 \geq \dots \geq \la_\ell > 0)$ is any partition, we
let $w_\la \in \Sigma$ be the corresponding Grassmannian permutation,
defined by $w_\la(i) = i+\la_{\ell+1-i}$ for $1 \leq i \leq \ell$,
$w_\la(0)=0$, and $w_\la(i) < w_\la(i+1)$ for $i \neq \ell$.  The
stable Grothendieck polynomial for $\la$ is defined by $G_\la =
G_{w_\la}$.  It was proved in
\cite[Thm.~6.3]{buch:littlewood-richardson} that every stable
Grothendieck polynomial $G_w$ can be written as a finite linear
combination of the stable polynomials indexed by partitions:
\[
G_w = \sum_\la a_{w,\la}\, G_\la \,.
\]
By using Theorem~\ref{thm:iso_groth} and
\cite[Thm.~8.1]{buch:littlewood-richardson}, we may identify each
stable polynomial $G_\la$ with the basis element of the same name in
the combinatorial $K$-theory ring $\Gamma(\N^2)$.  In this way we can
consider each stable Grothendieck polynomial $G_w$ as an element of
$\Gamma(\N^2)$.

A result of Lascoux \cite{lascoux:transition} shows that each
coefficient $a_{w,\la}$ is non-negative.  It was proved in
\cite{buch.kresch.ea:stable} that $a_{w,\la}$ is equal to the
number of increasing tableaux $T$ of shape $\la$ such that $w(T) =
w^{-1}$.  This formula has the following generalization.

\begin{cor}\label{cor:sgroth_prod}
  For any permutation $w \in \Sigma$ and Young diagram $\la$ we
  have\linebreak $G_w \cdot G_\la = \sum_\nu c^\nu_{w,\la} G_\nu$,
  where $c^\nu_{w,\la}$ is the number of increasing tableaux $T$ of
  shape $\nu/\la$ such that $w(T) = w^{-1}$.
\end{cor}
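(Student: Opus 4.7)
The plan is to combine three ingredients: the expansion of an arbitrary stable Grothendieck polynomial into Grothendieck polynomials indexed by partitions, the Littlewood--Richardson rule formulated via greedy rectification, and the jeu de taquin invariance of the Hecke permutation.

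First I would recall that by \cite[Thm.~6.3]{buch:littlewood-richardson}, together with the identification of $G_\la$ with the basis element $G_\la \in \Gamma(\N^2)$ from Theorem~\ref{thm:iso_groth} and \cite[Thm.~8.1]{buch:littlewood-richardson}, one has an expansion
\[
G_w \ = \ \sum_\mu a_{w,\mu}\, G_\mu \ \in \ \Gamma(\N^2)\,,
\]
where the sum runs over partitions $\mu$. By the formula of \cite{buch.kresch.ea:stable} recalled in the introduction and just before the corollary, the coefficient $a_{w,\mu}$ is the number of increasing tableaux $S$ of shape $\mu$ such that $w(S) = w^{-1}$. Multiplying by $G_\la$ and using bilinearity in $\Gamma(\N^2)$ gives
\[
G_w \cdot G_\la \ = \ \sum_{\mu,\nu} a_{w,\mu}\, c^\nu_{\la,\mu}\, G_\nu\,,
\]
so it suffices to show that for each fixed $\nu$, the inner sum $\sum_\mu a_{w,\mu}\, c^\nu_{\la,\mu}$ equals the number of increasing tableaux $T$ of shape $\nu/\la$ with $w(T) = w^{-1}$.

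The key step will be a direct bijective argument using greedy rectification. By Corollary~\ref{cor:lr_greedy}, for any fixed increasing tableau $S$ of shape $\mu$, the structure constant $c^\nu_{\la,\mu}$ equals the number of increasing tableaux $T$ of shape $\nu/\la$ with $\rect(T) = S$. Therefore $\sum_\mu a_{w,\mu}\, c^\nu_{\la,\mu}$ counts pairs $(S,T)$ in which $S$ is an increasing tableau of some straight shape $\mu$ with $w(S)=w^{-1}$, and $T$ is an increasing tableau of shape $\nu/\la$ with $\rect(T)=S$. I would then observe that the map $T \mapsto (\rect(T), T)$ is a bijection between the set of increasing tableaux $T$ of shape $\nu/\la$ with $w(T) = w^{-1}$ and this set of pairs: the map is well defined because greedy rectification produces a tableau of straight shape, and because Corollary~\ref{cor:jdt_hecke} gives $w(\rect(T)) = w(T) = w^{-1}$, so $S = \rect(T)$ is counted by $a_{w,\sh(S)}$; and the inverse is simply $(S,T) \mapsto T$, since $S$ is determined by $T$.

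The main obstacle here is conceptual rather than technical: one has to carefully track that all three identifications --- the expansion of $G_w$, the interpretation of $c^\nu_{\la,\mu}$ via greedy rectification, and the Hecke-permutation invariance --- are mutually compatible, and in particular that the greedy rectification $\rect(T)$ is a single well-defined straight-shape tableau (which is guaranteed because $\wh M_\la$ is a unique rectification target by Corollary~\ref{cor:maxinc_urt}, so that $\rect(T) = \jdt_{\wh M_\la}(T)$). Once these pieces are in place, the bijection and the proof of the corollary are immediate.
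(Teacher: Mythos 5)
Your proposal is correct, and it reaches the conclusion by a genuinely workable route that differs in organization from the paper's. The paper's proof is a three-line application of the operator formalism: it sets $\tau = \{T \mid w(T) = w^{-1}\}$, notes via Corollary~\ref{cor:jdt_hecke} that $\tau$ is closed under jeu de taquin slides, invokes \cite[Thm.~1]{buch.kresch.ea:stable} to get $G_w = F_\tau(1)$, and then applies Proposition~\ref{prop:assoc} to conclude $G_w \cdot G_\la = F_\tau(G_\la)$, whose coefficients are by definition the counts $\tau(\nu/\la)$. You instead expand $G_w$ in the $G_\mu$ basis, multiply out, and reassemble the coefficient of $G_\nu$ through an explicit bijection $T \mapsto (\rect(T),T)$. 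The two arguments rest on the same pillars (the \cite{buch.kresch.ea:stable} formula for $a_{w,\mu}$ and the jeu-de-taquin invariance of the Hecke permutation), but yours additionally requires Corollary~\ref{cor:lr_greedy}; you were right to use the greedy-rectification version rather than Corollary~\ref{cor:lrrule}, since the straight-shape tableaux $S$ with $w(S)=w^{-1}$ appearing in the expansion of $G_w$ need not be unique rectification targets, and the greedy rectification is the device that makes ``the'' rectification of $T$ well defined in your bijection. What the paper's route buys is brevity and the avoidance of any case-by-case bookkeeping over $\mu$; what yours buys is a transparent bijective explanation of where each tableau $T$ of shape $\nu/\la$ with $w(T)=w^{-1}$ lands in the double sum $\sum_{\mu}a_{w,\mu}c^\nu_{\la,\mu}$.
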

\begin{proof}
  Let $\tau = \{ T \mid w(T) = w^{-1} \}$ be the set of all increasing
  tableaux $T$ of type A for which $w(T) = w^{-1}$.  It follows from
  Corollary~\ref{cor:jdt_hecke} that $\tau$ is closed under jeu de
  taquin slides, and \cite[Thm.~1]{buch.kresch.ea:stable} shows that
  $G_w = F_\tau(1)$ with the notation of \S\ref{sec:comb_ktheory}.
  Proposition~\ref{prop:assoc} therefore shows that $G_w \cdot G_\la =
  F_\tau(G_\la)$, as required.
\end{proof}

\begin{remark}
  Given a partition $\la$, there are several ways to choose a
  permutation $w$ for which $G_w = G_\la$, including $w_\la$,
  $w(M_\la)^{-1}$, and shifts of these permutations.  For any such
  permutation $w$ there exists a unique increasing tableau $U$ of
  shape $\la$ such that $w(U) = w^{-1}$, and this tableau is a unique
  rectification target.
\end{remark}

\subsection{Rectification of minimal increasing tableaux}

Given increasing tableaux $S$ and $T$ of type A, we let $S * T$ denote
the increasing tableau obtained by attaching the north-east corner of
$S$ to the south-west corner of $T$.  We then let $S \cdot T = \rect(S
* T)$ denote the greedy rectification of this tableau (see
\S\ref{sec:minmax}).  We are mainly interested in this construction
when $S$ and $T$ have straight shapes.  For example, we have
\[
\tableau{10}{1&2\\4} * \tableau{10}{1&3\\3} \ = \  
\tableau{10}{&&1&3\\ &&3\\ 1&2\\ 4}
\ \ \ \ \text{and} \ \ \ \ 
\tableau{10}{1&2\\4} \cdot \tableau{10}{1&3\\3} \ = \ 
\tableau{10}{1&2&3\\2\\4} \ .
\]
It has been proved in \cite{thomas.yong:direct} that $S\cdot T$ is
the only rectification of $S*T$, but we will not use this fact here.
On the other hand, the product on increasing tableaux is not
associative since
\[
\left( \tableau{10}{1} \cdot \tableau{10}{1&4\\3} \right) 
\cdot \tableau{10}{2} \ = \ \tableau{10}{1&2&4\\3}
\text{ \ \ \ whereas \ \ \ }
\tableau{10}{1} \cdot \left( \tableau{10}{1&4\\3} \cdot 
\tableau{10}{2} \right) \ = \ \tableau{10}{1&2&4\\3&4} \ .
\]
However, the product is associative up to jeu de taquin equivalence.
More precisely, if $S$, $T$, and $U$ are increasing tableaux, then
\[
(S \cdot T) \cdot U \equiv S \cdot (T \cdot U) 
\]
as both sides are jeu de taquin equivalent to $S*T*U$.  Notice also
that the product of increasing tableaux satisfies the identity
\[
S^\dagger \cdot T^\dagger = (T \cdot S)^\dagger \,,
\]
where $T^\dagger$ is the conjugate of $T$, obtained by mirroring the
boxes of $T$ in the north-west to south-east diagonal.

\begin{lemma}\label{lem:mininc_prod}
  Let $M$ and $N$ be minimal increasing tableaux of straight shapes of
  type A.  Then $M \cdot N$ is a minimal increasing tableau.
\end{lemma}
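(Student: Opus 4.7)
The plan is to reduce the lemma to an identity of Hecke permutations. By Theorem~\ref{thm:jdt_kknuth}, $M\cdot N = \rect(M* N)$ is $K$-Knuth equivalent to $M * N$, and since the row word of $M_\la * M_\mu$ is the concatenation of the row word of $M_\la$ with the row word of $M_\mu$ (reading the lower piece first), the Hecke permutations satisfy
\[
w(M\cdot N) \;=\; w(M * N) \;=\; w(M_\la)\cdot w(M_\mu).
\]
Since $M\cdot N$ already has straight shape, Theorem~\ref{thm:mininc_urt_A} then lets us conclude $M\cdot N = M_\sigma$ as soon as we exhibit a partition $\sigma$ satisfying $w(M_\sigma) = w(M_\la)\cdot w(M_\mu)$.

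The existence of such a $\sigma$ I would prove by induction on $|\mu|$, the base case $\mu = \emptyset$ being trivial. For the inductive step, let $m = \ell(\mu)$ and $\mu' = (\mu_1,\ldots,\mu_{m-1})$; a direct inspection of row words shows
\[
w(M_\mu) \;=\; w_m\cdot w(M_{\mu'}),
\]
where $w_m = s_m s_{m+1}\cdots s_{m+\mu_m-1}$ is the Hecke product encoding the bottom row of $M_\mu$. Therefore $w(M_\la)\cdot w(M_\mu) = \bigl(w(M_\la)\cdot w_m\bigr)\cdot w(M_{\mu'})$, and if one establishes the single-row claim
\[
w(M_\la)\cdot (s_r s_{r+1}\cdots s_{r+k-1}) \;=\; w(M_{\la'})
\]
for any $\la$, $r$, $k$, with $\la'$ an explicit partition depending on $(\la,r,k)$, then the inductive hypothesis applied to the pair $(\la',\mu')$ completes the proof. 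I would verify this single-row claim by rewriting the left side into the canonical form $w_{\ell'}\cdot w_{\ell'-1}\cdots w_1'$ of some minimal increasing tableau using the braid and commutation relations of the Hecke monoid.

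The main obstacle is the case analysis in the single-row merging step, since the description of $\la'$ depends on how the interval $[r,r+k-1]$ of indices sits relative to the row lengths $\la_1,\ldots,\la_\ell$. When the incoming row would extend past existing rows of $\la$, Hecke reductions occur that correspond geometrically to boxes being absorbed during a slide, and so identifying $\la'$ requires tracking several subcases. A more streamlined but less explicit alternative is to induct directly on $|\la|+|\mu|$ by analyzing the greedy rectification of $M_\la * M_\mu$: one argues that a single greedy forward slide produces the minimal skew tableau of a strictly smaller skew shape, and then tracking chain lengths through each slide forces the final straight-shape tableau to equal $M_\sigma$ for $\sigma = \sh(M\cdot N)$.
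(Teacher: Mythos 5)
Your opening reduction is sound and genuinely different from the paper's argument: since $w(M\cdot N)=w(M*N)=w(M_\la)\cdot w(M_\mu)$ by Corollary~\ref{cor:jdt_hecke}, and since (by the proof of Theorem~\ref{thm:mininc_urt_A}) a straight-shape tableau whose Hecke permutation equals $w(M_\sigma)$ must equal $M_\sigma$, the lemma is indeed equivalent to showing that $w(M_\la)\cdot w(M_\mu)=w(M_\sigma)$ for some partition $\sigma$. The gap is in the induction you propose for this Hecke identity: the single-row claim $w(M_\la)\cdot(s_r s_{r+1}\cdots s_{r+k-1})=w(M_{\la'})$ is false for the values of $r$ that your recursion actually produces. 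Take $\la=(1)$ and $\mu=(1,1,1)$, so $m=3$ and $w_3=s_3$. Then $w(M_{(1)})\cdot w_3=s_1\cdot s_3=s_1s_3$, with one-line notation $2143$ and code $(1,0,1,0)$; this is not $w(M_{\la'})$ for any partition $\la'$ (the only such permutations of length two are $w(M_{(2)})=s_1s_2$ and $w(M_{(1,1)})=s_2s_1$). The problem is structural: the bottom row of $M_\mu$ contributes a factor starting at index $m=\ell(\mu)$, so whenever $\ell(\mu)>\ell(\la)+1$ the intermediate product $w(M_\la)\cdot w_m$ has disconnected support and leaves the class of permutations of the form $w(M_{\la'})$. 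The full product does return to that class --- in the example $s_1\cdot s_3\cdot s_2\cdot s_1=w(M_{(2,1,1)})$ --- but only after the remaining factors $w(M_{\mu'})$ are multiplied in, so the row-by-row induction cannot be closed in the order you set it up. Some reorganization (and a correct intermediate statement about which Hecke products of partial row words stay dominant) is needed before this route becomes a proof.

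Your fallback is not a proof either: the assertion that a single greedy slide carries the minimal increasing tableau of a skew shape to the minimal increasing tableau of a smaller skew shape is essentially Theorem~\ref{thm:mininc_rect_A} in step-by-step form, and ``tracking chain lengths through each slide'' is exactly the content that has to be established; note also that the paper deduces Theorem~\ref{thm:mininc_rect_A} \emph{from} this lemma, so one must be careful not to invert that dependence. For comparison, the paper avoids Hecke bookkeeping entirely: it inducts on $\lds(M)+\lis(N)$, verifies the one-row-times-one-column base case by hand, peels the first column off $N$, and reassembles using associativity of the product up to jeu de taquin equivalence together with the invariance of $\lds$, invoking Theorem~\ref{thm:mininc_urt_A} only to identify the resulting straight-shape tableau.
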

\begin{proof}
  Since all minimal increasing tableaux of straight shapes are unique
  rectification targets by Theorem~\ref{thm:mininc_urt_A}, it suffices
  to show that $M*N$ is jeu de taquin equivalent to some minimal
  increasing tableau of straight shape.  We may assume that $M$ and
  $N$ are both non-empty tableaux.  We then proceed by induction on
  $\lds(M)+\lis(N)$, the number of rows in $M$ plus the number of
  columns in $N$.
  
  In the base case $\lds(M)+\lis(N) = 2$, $M = M_{(a)}$ is a minimal
  increasing tableau with a single row, and $N = M_{(1^b)}$ is a
  minimal increasing tableau with a single column.  In this case we
  leave it to the reader to check that $M \cdot N = M_{(c,1^{d-1})}$
  is a minimal increasing tableau of hook shape, where
  \[
  c = \begin{cases}
    a & \text{if $a \geq b$}\\
    a+1 & \text{if $a < b$}
  \end{cases}
  \text{ \ \ \ \ \ \ and \ \ \ \ \ \ }
  d = \begin{cases}
    b & \text{if $b \geq a$}\\
    b+1 & \text{if $b < a$.}
  \end{cases}
  \]
  
  Assume next that $\lds(M)+\lis(N) \geq 3$.  By possibly replacing
  $(M,N)$ with $(N^\dagger,M^\dagger)$, we may assume that $N$ has at
  least two columns.  Let $N'$ be the first column of $N$ and let
  $N''$ be the rest of $N$.  Then form the product $T = M \cdot N'$,
  and let $T'$ be the first column of $T$ and $T''$ the rest of $T$.
  It follows from the induction hypothesis that $T$ is a minimal
  increasing tableau.  Therefore $T''$ is minimal increasing with
  smallest element 2, by which we mean that $T''$ becomes a minimal
  increasing tableau in the usual sense if all its integers are
  decreased by one.  Since $N''$ is also minimal increasing with
  smallest element 2, and since $\lds(T'') \leq \lds(M)$, a second
  application of the induction hypothesis shows that the tableau $S =
  T'' \cdot N''$ is minimal increasing with smallest element 2.
  Notice also that
  \[
  M \cdot N = M \cdot (N' \cdot N'') \equiv (M \cdot N') \cdot N''
  = (T' \cdot T'') \cdot N'' \equiv T' \cdot (T'' \cdot N'')
  = T' \cdot S \,.
  \]
  It follows that $\lds(T') = \lds(T) = \lds(M * N') = \lds(M * N) =
  \lds(T' \cdot S) \geq \lds(S)$.  Since $T'$ is a minimal increasing
  tableau with a single column, $S$ is minimal increasing with
  smallest element 2, and $T'$ has at least as many rows as $S$, we
  deduce that the product $T' \cdot S$ is obtained by attaching $S$ to
  the right side of $T'$.  This shows that $T' \cdot S$ is a minimal
  increasing tableau and completes the proof.
\end{proof}

\begin{thm}\label{thm:mininc_rect_A}
  Let $T$ be any minimal increasing tableau of type A.  Then
  $\rect(T)$ is a minimal increasing tableau.
\end{thm}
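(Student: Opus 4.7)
The plan is to argue by induction on the number of rows $\ell$ of the shape $\nu/\la$ of $T = M_{\nu/\la}$. If $\ell \le 1$, then $T$ is either empty or a single row carrying the consecutive values $1, 2, \dots, r_1$ (where $r_1 = \nu_1 - \la_1$); repeated forward slides shift this row to columns $1, \dots, r_1$, yielding the minimal straight-shape tableau $M_{(r_1)}$.

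For the inductive step $\ell \ge 2$, set $\tilde\nu = (\nu_1, \dots, \nu_{\ell-1})$, $\tilde\la = (\la_1, \dots, \la_{\ell-1})$, and $r_\ell = \nu_\ell - \la_\ell$, and let $T_{\text{top}}$ denote the restriction of $T$ to its top $\ell - 1$ rows. The first key step is the identity $T_{\text{top}} = M_{\tilde\nu/\tilde\la}$: for any box $\al$ lying in the top $\ell - 1$ rows, every strictly increasing chain in $\nu/\la$ ending at $\al$ consists of boxes $\be \le \al$, all of which lie in rows at most $\ell - 1$. Hence $M_{\nu/\la}(\al) = M_{\tilde\nu/\tilde\la}(\al)$ throughout the top $\ell - 1$ rows. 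By the inductive hypothesis, $\rect(T_{\text{top}}) = M_\mu$ for some straight shape $\mu$.

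The remaining task is to show $\rect(T) = M_\mu \cdot M_{(r_\ell)}$, after which Lemma~\ref{lem:mininc_prod} guarantees that this product is a minimal increasing tableau of straight shape, completing the induction. I would establish this by exhibiting a jeu de taquin equivalence $T \equiv M_\mu * M_{(r_\ell)}$, most cleanly at the level of row words via Theorem~\ref{thm:jdt_kknuth}: the row word of $T$ is the concatenation of the row word of the bottom row $R$ of $T$ with the row word of $T_{\text{top}}$, and by induction together with Lemma~\ref{lem:tabclass} the latter is $K$-Knuth equivalent to the row word of $M_\mu$. The principal obstacle is that the bottom row $R$ carries values $M_{\nu/\la}([\ell, c])$ determined by chains passing through the upper rows and is therefore not a priori the consecutive sequence $1, 2, \dots, r_\ell$; the technical heart of the argument is to produce an explicit $K$-Knuth manipulation that moves the row word of $R$ past the row word of $M_\mu$ and simultaneously transforms it into $(1, 2, \dots, r_\ell)$, so that the resulting word is the row word of $M_\mu * M_{(r_\ell)}$. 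An alternative approach is to carry out the slides inside $T$ in a prescribed order — first applying forward slides at the inner corners of $\tilde\la$ to rectify $T_{\text{top}}$ to $M_\mu$ — and then verify that the resulting configuration, together with what remains of the bottom row, matches the standard placement of $M_\mu * M_{(r_\ell)}$.
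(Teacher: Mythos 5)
Your reduction to the top $\ell-1$ rows is fine as far as it goes (the identity $T_{\text{top}} = M_{\tilde\nu/\tilde\la}$ is correct), but the intermediate claim on which everything else rests, $\rect(T) = M_\mu \cdot M_{(r_\ell)}$, is false, so the ``technical heart'' you defer cannot be supplied. Take $\nu = (2,2)$ and $\la = (1,1)$. Then $T = M_{\nu/\la}$ occupies the two boxes $[1,2]$ and $[2,2]$ with values $1$ and $2$ respectively; its row word is $(2,1)$, and a direct computation of the greedy rectification gives $\rect(T) = M_{(1,1)}$. On your side $\mu = (1)$ and $r_\ell = 1$, and $M_{(1)} \cdot M_{(1)} = M_{(1)}$ (the base case of Lemma~\ref{lem:mininc_prod} with $a=b=1$), which is not $M_{(1,1)}$. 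No $K$-Knuth manipulation can turn the row word $(2,1)$ of $T$ into the row word $(1,1)$ of $M_{(1)} * M_{(1)}$: every basic relation of Definition~\ref{defn:kknuth} preserves the set of distinct letters (and the Hecke permutation, cf.\ Corollary~\ref{cor:jdt_hecke}), and $\{1,2\} \neq \{1\}$. The underlying problem is that the operation $*$ places its two factors in disjoint rows \emph{and} disjoint columns, whereas the bottom row of $T$ and the rows above it can share columns; those column relations are exactly the data that make the bottom row of $M_{\nu/\la}$ differ from $(1,\dots,r_\ell)$ in the first place, and your two-piece row decomposition discards them. The same example also defeats the reversed product $M_{(r_\ell)}\cdot M_\mu$ and the ``alternative approach'' in your last sentence, since the target tableau is wrong.

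The paper's proof uses a different decomposition that keeps track of this overlap: it chooses a box $\al=[r,c]$ of $\sh(T)$ with $r+c$ minimal, so that the boxes south-east of $\al$ form a translated straight-shape minimal tableau $S$, the boxes strictly west of $\al$ form a minimal increasing tableau $M$ lying strictly south-west of the rest, and the boxes strictly north of $\al$ form a minimal increasing tableau $N$ lying strictly north-east. Then $T$ is jeu de taquin equivalent to $(\rect(M)\cdot S)\cdot \rect(N)$, the induction is on the number of boxes, and Lemma~\ref{lem:mininc_prod} finishes the argument. If you want to salvage a row-by-row induction, you would need an intermediate statement that records how the bottom row interlocks with the rows above --- which is essentially what the three-piece cut at a north-west-most box accomplishes.
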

\begin{proof}
  Let $\al=[r,c]$ be a box of $\sh(T)$ that is as far north-west as
  possible, i.e. a box for which $r+c$ is minimal.  Let $S$ be the
  tableau of boxes in $T$ that are south-east of $\al$, let $M$ be the
  tableau of boxes strictly west of $\al$, and let $N$ be the tableau
  of boxes strictly north of $\al$.
  \[
  T \ = \ \ \tableau{10}{&&&&&[lt]&[t]&[hr]\\ &&&&[lh]&[b]&[br]\\
    &&[lt]&[t]&{}&[hr]\\ & [vt]&[l]&[b]&[br]\\ [lt]&[r]&[vb]\\
    [lb]&[br]}
  \hspace{-26mm}\raisebox{-7mm}{$M$}
  \hspace{6mm}S
  \hspace{5mm}\raisebox{6mm}{$N$}
  \medskip
  \]
  The choice of $\al$ implies that $S$, $M$, and $N$ are minimal
  increasing tableaux, and $S$ has straight shape.  By induction on
  the number of boxes in $T$, it follows that $\rect(M)$ and
  $\rect(N)$ are both minimal increasing tableaux of straight shapes.
  Lemma~\ref{lem:mininc_prod} therefore implies that $(\rect(M) \cdot
  S) \cdot \rect(N)$ is a minimal increasing tableau.  The result
  follows because $T$ is jeu de taquin equivalent to this product,
  which is a unique rectification target by
  Theorem~\ref{thm:mininc_urt_A}.
\end{proof}

\begin{remark}
  It follows from Lemma~\ref{lem:mininc_prod} that the product of
  increasing tableaux of type A is associative when restricted to the
  set of all minimal increasing tableaux.  More precisely, if $S$,
  $T$, and $U$ are minimal increasing tableaux of type A, then
  $(S\cdot T)\cdot U = S\cdot (T\cdot U)$, as both sides are minimal
  increasing tableaux of straight shape, and all such tableaux are
  unique rectification targets by Theorem~\ref{thm:mininc_urt_A}.  Let
  $\cP$ denote the set of all Young diagrams.  Then the product $\la
  \cdot \mu := \sh(M_\la \cdot M_\mu)$ gives $\cP$ the structure of
  associative monoid, with unit equal to the empty Young diagram.
  Furthermore, the map $\cP \to \Sigma$ defined by $\la \mapsto
  w(M_\la)$ is a monoid homomorphism, where $\Sigma$ is equipped with
  the Hecke product.
\end{remark}

\ignore{
\begin{remark}
  FIXME: Factor sequences.
\end{remark}
}


\section{Maximal orthogonal Grassmannians}\label{sec:maxog}

\subsection{Tableaux of type B}

Let $\Delta = \{ [r,c] \in \N^2 \mid r \leq c \}$ be the set of boxes
in $\N^2$ that are on or above the diagonal.  We give $\Delta$ the
partial order inherited from $\N^2$.  A straight shape in $\Delta$ is
the same as a {\em shifted Young diagram\/} and can be identified with
the strict partition $(\la_1 > \dots > \la_\ell)$ where $\ell$ is the
number of rows in $\la$ and $\la_i$ is the number of boxes in row $i$.
Any tableau defined on a skew shape in $\Delta$ will be called a {\em
  tableau of type B}.  If $X = \OG(n,2n)$ is a maximal orthogonal
Grassmannian, then we identify $\La_X$ with the boxes $[r,c] \in
\Delta$ for which $c \leq n-1$.  Every tableau for $\La_X$ is then a
tableau of type B.  Two increasing tableaux $T$ and $T'$ of type B are
jeu de taquin equivalent for $\Delta$ if and only if they are jeu de
taquin equivalent for $\La_X$ for all $X = \OG(n,2n)$ with $n$
sufficiently large.  An increasing tableau $U$ of straight shape of
type B is a unique rectification target for $\Delta$ if and only if
$U$ is a unique rectification target for $\La_X$ for all maximal
orthogonal Grassmannians $X$ satisfying $\sh(U) \subset \La_X$.

If $\theta \subset \Delta$ is a skew shape, then we let $\theta^2
\subset \N^2$ denote the union of $\theta$ with its reflection in the
diagonal, i.e.\ $\theta^2 = \{ [r,c] \in \N^2 \mid [r,c] \in \theta
\text{ or } [c,r] \in \theta \}$.  This is a skew shape in $\N^2$.  If
$T$ is an increasing tableau of type B with $\sh(T)=\theta$, then we
let $T^2$ denote the tableau of type A of shape $\theta^2$ defined by
\[
T^2([r,c]) = \begin{cases}
  T([r,c]) & \text{if $r\leq c$,} \\
  T([c,r]) & \text{if $r\geq c$.}
\end{cases}
\]
We call $T^2$ the {\em doubling\/} of $T$.  For example, the tableau\smallskip
\[
T = \tableau{10}{
  [lh]&[t]&{}&{}&{}&[VT]2&[t]\\
  &[lb]&&[LT]1&[lT]3&[lhR]4\\
  &&[LH]2&[a]4&6&[ltBR]7\\
  &&&[LB]5&[ltBR]7\\
  &&&&[lb]\\
  &&&&&{}\\
  &&&&&&{}
}
\text{ \ \ \ has doubling \ \ \ }
T^2 = \tableau{10}{
  [lt]&[t]&{}&{}&{}&[VTb]2&[t]\\
  [l]&&&[LT]1&[at]3&[ar]4\\
  [l]&&[LT]2&[a]4&6&[BR]7\\
  [l]&[LT]1&[a]4&5&[BR]7\\
  [l]&[la]3&[a]6&[BR]7\\
  [LH]2&[ab]4&[BR]7\\
  [l]
} \ .\medskip
\]
If $T$ is an increasing tableau of type B of shape $\nu/\la$, and $C
\subset \la$ is a subset of the maximal boxes in $\la$, then it
follows immediately from the definitions that
\begin{equation}\label{eqn:doubling}
\jdt_C(T)^2 = \jdt_{C^2}(T^2) \,.
\end{equation}
The use of doubled tableaux appears to originate in Worley's thesis
\cite[\S6.3]{worley:theory}, which gives a slightly different
construction such that the number of boxes in a doubled tableau is
always twice the number of boxes in the original tableau.  A
modification of Worley's doubling was used in
\cite{clifford.thomas.ea:k-theoretic} and is shown to satisfy an
identity similar to (\ref{eqn:doubling}), with the proof requiring
slightly more work, see
\cite[Lemma~3.2]{clifford.thomas.ea:k-theoretic}.  Equation
(\ref{eqn:doubling}) has the following consequence.

\begin{prop}\label{prop:translateAB}
  {\rm(a)} If $S$ and $T$ are jeu de taquin equivalent
  increasing tableaux of type B, then
  $S^2$ and $T^2$ are jeu de taquin equivalent tableaux of type
  A.\smallskip

  \noin{\rm(b)} If\, $U$ is an increasing tableau of straight shape of
  type B such that $U^2$ is a unique rectification target of type A,
  then $U$ is a unique rectification target of type B.
\end{prop}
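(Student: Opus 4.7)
The plan is to reduce both claims to the commutation identity (\ref{eqn:doubling}), $\jdt_C(T)^2 = \jdt_{C^2}(T^2)$, which already bridges forward slides of type B and forward slides of type A. One preliminary observation is needed: if $C$ is a set of maximal boxes in a straight shape $\la \subset \Delta$, then $C^2$ is a set of maximal boxes in $\la^2 \subset \N^2$. This comes down to a short case analysis on whether a putative dominating box in $\la^2$ lies on, above, or below the diagonal, and is immediate from the definition of $\la^2$.

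For part (a), I would argue slide by slide along any sequence of forward and reverse slides connecting $S$ to $T$. A forward type-B slide $T' = \jdt_C(T)$ lifts directly by (\ref{eqn:doubling}) to the forward type-A slide $(T')^2 = \jdt_{C^2}(T^2)$. A reverse type-B slide $T' = \wh\jdt_{\wh C}(T)$ is handled by passing to its inverse: by the inverse relationship between forward and reverse slides, we have $T = \jdt_{C}(T')$ for the appropriate subset $C \subset \sh(T')$, so (\ref{eqn:doubling}) yields $T^2 = \jdt_{C^2}((T')^2)$, which exhibits $(T')^2$ as a reverse slide of $T^2$. Composing these step-by-step along the chain from $S$ to $T$ produces a chain of type-A slides from $S^2$ to $T^2$, proving jeu de taquin equivalence of the doublings.

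For part (b), let $T$ be any increasing tableau of type B that has $U$ as a rectification, and let $T_1$ be an arbitrary rectification of $T$ of type B. Iterating (\ref{eqn:doubling}) along the sequence of forward slides from $T$ to $U$ shows that $U^2$ is a rectification of $T^2$ in type A, and similarly $T_1^2$ is a rectification of $T^2$. Because $U$ and $T_1$ have straight shapes in $\Delta$, their doublings $U^2$ and $T_1^2$ have straight shapes in $\N^2$, so the hypothesis that $U^2$ is a unique rectification target of type A forces $T_1^2 = U^2$. Since the doubling map $T \mapsto T^2$ is injective on tableaux of type B (recover $T$ by restricting to the boxes $[r,c]$ with $r \leq c$), we conclude $T_1 = U$, as required.

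There is no substantial obstacle here: the whole argument is a bookkeeping exercise once (\ref{eqn:doubling}) is in hand. The only point that could be overlooked is ensuring that forward slides of $T$ really lift to forward slides of $T^2$ (rather than to arbitrary jeu de taquin moves), which is precisely the content of (\ref{eqn:doubling}) together with the preliminary observation on maximal boxes. This is the reason we can propagate a \emph{rectification}, not just a jeu de taquin equivalence, through the doubling.
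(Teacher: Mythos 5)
Your argument is correct and follows exactly the route the paper intends: Proposition~\ref{prop:translateAB} is stated there as an immediate consequence of the doubling identity (\ref{eqn:doubling}), and your write-up simply supplies the bookkeeping (the check that $C^2$ consists of maximal boxes of $\la^2$, the inversion trick for reverse slides, and the injectivity of $T\mapsto T^2$) that the paper leaves implicit. Nothing is missing.
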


Notice that an increasing tableau $T$ of type B is a minimal
increasing tableau if and only if $T^2$ is a minimal increasing
tableau of type A.  We can therefore use
Proposition~\ref{prop:translateAB} to derive the following two
corollaries from Theorem~\ref{thm:mininc_urt_A} and
Theorem~\ref{thm:mininc_rect_A}.

\begin{cor}\label{cor:mininc_urt_B}
  Let $\la$ be a shifted Young diagram.  Then the minimal increasing
  tableau $M_\la$ of type B is a unique rectification target.
  Furthermore, an increasing tableau $T$ of type B rectifies to
  $M_\la$ if and only if $w(T^2) = w(M_\la^2)$.
\end{cor}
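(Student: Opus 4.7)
The plan is to reduce the entire corollary to the type A results via the doubling construction of Proposition~\ref{prop:translateAB}. The key observation, which should be noted at the outset, is that the doubling operation $T \mapsto T^2$ is injective: since $T^2$ is obtained from $T$ by reflecting entries in the main diagonal, the original tableau $T$ can be recovered as the restriction of $T^2$ to $\Delta$. In particular, $U^2 = V^2$ forces $U = V$ for any two increasing tableaux of type B. Moreover, $M_\la^2$ is itself a minimal increasing tableau of type A on the doubled shape $\la^2$, because $\height_\Delta(\al) = \height_{\N^2}(\al)$ for every box $\al$ in a shifted straight shape.

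First I would dispatch the statement that $M_\la$ is a unique rectification target. By the observation above, $M_\la^2$ is a minimal increasing tableau of straight shape in $\N^2$, so Theorem~\ref{thm:mininc_urt_A} shows that $M_\la^2$ is a unique rectification target of type A. Proposition~\ref{prop:translateAB}(b) then directly yields that $M_\la$ is a unique rectification target of type B.

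Next I would prove the ``if and only if'' characterization. For the forward direction, suppose $T$ rectifies to $M_\la$. Then $T$ and $M_\la$ are jeu de taquin equivalent as tableaux of type B, so by Proposition~\ref{prop:translateAB}(a) their doublings $T^2$ and $M_\la^2$ are jeu de taquin equivalent as tableaux of type A, and Corollary~\ref{cor:jdt_hecke} gives $w(T^2) = w(M_\la^2)$. For the reverse direction, assume $w(T^2) = w(M_\la^2)$. Let $U$ be any rectification of $T$ (which exists and has shifted straight shape). By Proposition~\ref{prop:translateAB}(a), $U^2$ is a rectification of $T^2$. Now Theorem~\ref{thm:mininc_urt_A} applied to the type A tableau $T^2$, combined with the hypothesis $w(T^2) = w(M_\la^2)$, shows that $M_\la^2$ is the unique rectification of $T^2$, so $U^2 = M_\la^2$. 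Injectivity of doubling then gives $U = M_\la$, as required.

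Because every step hands off to a previously established result, I do not expect a serious obstacle: the only conceptual point that requires care is to verify that $M_\la^2$ really is the minimal increasing tableau of the doubled shape (which is immediate from the fact that heights in $\Delta$ and in $\N^2$ agree on $\la$) and that doubling is injective on increasing tableaux of type B. Both are elementary observations, so the argument is essentially a clean application of the type A theory through the doubling dictionary.
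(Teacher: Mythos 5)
Your proof is correct and follows the same route as the paper, which derives this corollary from Proposition~\ref{prop:translateAB} and Theorem~\ref{thm:mininc_urt_A} via the observation that $T$ is minimal increasing of type B exactly when $T^2$ is minimal increasing of type A. You merely spell out the details (injectivity of doubling, equality of heights in $\Delta$ and $\N^2$, and the use of Corollary~\ref{cor:jdt_hecke}) that the paper leaves implicit.
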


\begin{cor}\label{cor:mininc_rect_B}
  Let $T$ be any minimal increasing tableau of type B.  Then
  $\rect(T)$ is a minimal increasing tableau.
\end{cor}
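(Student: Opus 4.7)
The plan is to reduce the claim to its type A counterpart, Theorem~\ref{thm:mininc_rect_A}, using the doubling construction $T\mapsto T^2$. The key compatibility is equation (\ref{eqn:doubling}), which states that $\jdt_C(T)^2=\jdt_{C^2}(T^2)$. Iterating this through the greedy rectification of a skew tableau $T$ of type B, one sees that $\rect(T)^2$ is obtained from $T^2$ by a sequence of forward slides. Moreover, the doubling of any shifted Young diagram is an ordinary Young diagram, so $\rect(T)^2$ has straight shape in $\N^2$; that is, $\rect(T)^2$ is a rectification of $T^2$ in the type A sense.

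The one step that requires a separate argument is the auxiliary claim that an increasing tableau $T$ of skew shape $\theta\subset\Delta$ is a minimal increasing tableau of type B if and only if $T^2$ is a minimal increasing tableau of type A. Since $T([r,c])=T^2([r,c])$ for $r\leq c$, this reduces to showing that for every box $[r,c]\in\theta$ the maximal length of a chain in $\theta$ ending at $[r,c]$ equals the analogous quantity in $\theta^2$. One inequality is trivial. For the other, given a chain $[r_1,c_1]<[r_2,c_2]<\cdots<[r_k,c_k]$ in $\theta^2$, replace each box $[r_i,c_i]$ strictly below the diagonal by its reflection $[c_i,r_i]\in\theta$. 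Monotonicity of $\min$ and $\max$ shows that the reflected sequence is still weakly increasing coordinatewise, and a short case analysis splitting on whether each of two consecutive boxes lies above, on, or strictly below the diagonal shows that distinct boxes remain distinct after reflection. Hence the reflected chain still has length $k$ and lies entirely in $\theta$. This auxiliary lemma is the main obstacle; everything else is formal.

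Granted the lemma, the proof is immediate. Let $T$ be a minimal increasing tableau of type B. By the lemma, $T^2$ is a minimal increasing tableau of type A. By Theorem~\ref{thm:mininc_rect_A}, $\rect(T^2)$ is a minimal increasing tableau of straight shape, and by Theorem~\ref{thm:mininc_urt_A} it is a unique rectification target; in particular every rectification of $T^2$ equals $\rect(T^2)$. Combining this with the observation from the first paragraph gives $\rect(T)^2=\rect(T^2)$, which is minimal of type A. Applying the lemma in the reverse direction, $\rect(T)$ is minimal of type B, as required.
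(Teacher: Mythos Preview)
Your proof is correct and follows the same route as the paper: the paper simply observes (without proof) that a type~B tableau $T$ is minimal increasing if and only if $T^2$ is minimal increasing of type~A, and then says that Corollary~\ref{cor:mininc_rect_B} follows from Theorem~\ref{thm:mininc_rect_A} via Proposition~\ref{prop:translateAB} and equation~(\ref{eqn:doubling}). Your write-up differs only in that you supply an explicit chain-reflection argument for the auxiliary lemma that the paper leaves as a remark; the overall strategy is identical.
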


We also recover the result from \cite{clifford.thomas.ea:k-theoretic}
that row-wise superstandard tableaux of type B are unique
rectification targets.  Recall that this is not always true for
column-wise superstandard tableaux of type B by
Example~\ref{ex:non_urt_B}.

\begin{cor}[Clifford, Thomas, Yong]
  Let $\la$ be a shifted Young diagram.  Then the row-wise
  superstandard tableau $S_\la$ is a unique rectification target of
  type B.
\end{cor}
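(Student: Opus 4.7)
The plan is to reduce the problem to type A via Proposition~\ref{prop:translateAB}(b) and then show that the doubled tableau $S_\la^2$ is a unique rectification target of type A by induction on the number of rows of $\la$, with Theorem~\ref{thm:fathook} providing the inductive step.

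The key combinatorial observation is that $S_\la^2$ admits a recursive ``fat hook plus corner'' decomposition. Writing $\la = (\la_1 > \la_2 > \dots > \la_\ell)$ as a strict partition and setting $\bar\la = (\la_2, \dots, \la_\ell)$, the first row of $\la^2$ has length $\la_1$, and the first column also has length $\la_1$ (since $(r,1) \in \la^2$ iff $(1,r) \in \la$); together they form the fat hook $\mu := (\la_1, 1^{\la_1-1})$. Using the explicit formula $S_\la(r,c) = \la_1 + \dots + \la_{r-1} + (c-r+1)$, one checks that $S_\la^2$ restricted to this hook is precisely $M_\mu$, with entries running from $1$ to $\la_1$. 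The complementary region of $\la^2$, consisting of the boxes with both coordinates at least $2$, is the translate of $\bar\la^2$ by $(1,1)$, and a direct comparison shows that $S_\la^2$ on this region equals $S_{\bar\la}^2$ with all entries shifted up by $\la_1$.

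Since $\la$ is strict we have $\la_2 \leq \la_1 - 1$, so $\bar\la^2$ is contained in a $(\la_1-1)\times(\la_1-1)$ square; hence the shifted copy of $S_{\bar\la}^2$ fits in the corner of $M_\mu$ (writing $\mu = (a^b, c^d)$ with $a = \la_1$, $b = 1$, $c = 1$, $d = \la_1 - 1$, the corner accommodates at most $d$ rows and at most $a - c$ columns, and the shifted entries are all strictly greater than $\la_1$). Jeu de taquin slides and rectifications depend only on the relative order of the entries, so the property of being a unique rectification target is preserved under adding a constant to all entries. Thus by induction on $\ell$, with the trivial base case $\la = \emptyset$, the tableau $S_{\bar\la}^2 + \la_1$ is a unique rectification target of type A, and Theorem~\ref{thm:fathook} yields that $S_\la^2 = M_\mu \cup (S_{\bar\la}^2 + \la_1)$ is itself a unique rectification target of type A. Proposition~\ref{prop:translateAB}(b) then finishes the proof.

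The main obstacle is verifying the decomposition $S_\la^2 = M_\mu \cup (S_{\bar\la}^2 + \la_1)$ carefully: one must match the first row and column of $S_\la^2$ with $M_\mu$ and identify the remaining entries with the shifted copy of $S_{\bar\la}^2$. This is a bookkeeping exercise with the doubling operation and the row-wise superstandard filling, relying crucially on the fact that the first row of $S_\la$ contains exactly the integers $1, 2, \dots, \la_1$, which then propagate down the first column of $S_\la^2$ via reflection across the diagonal.
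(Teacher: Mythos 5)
Your proof is correct and follows essentially the same route as the paper, which simply asserts that $S_\la^2$ is a unique rectification target by Theorem~\ref{thm:fathook} and concludes via Proposition~\ref{prop:translateAB}(b). You have merely made explicit the inductive hook-plus-corner decomposition $S_\la^2 = M_{(\la_1,1^{\la_1-1})} \cup (S_{\bar\la}^2 + \la_1)$ that the paper's one-line argument leaves implicit.
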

\begin{proof}
  This follows because $S_\la^2$ is a unique rectification target by
  Theorem~\ref{thm:fathook}.
\end{proof}

We believe that Proposition~\ref{prop:translateAB}(a) is true in both
directions.  In fact, this follows from Conjecture~\ref{conj:kknuthAB}
below.  However, the following example shows that not all unique
rectification targets of type B can be detected by
Proposition~\ref{prop:translateAB}(b).

\begin{example}
  One can check that the following three increasing tableaux of type A
  are jeu de taquin equivalent:
  \[
  T_1 = \tableau{10}{1&2&4\\ 2&3&5\\ 4&5}
  \ \ \ ; \ \ \ 
  T_2 = \tableau{10}{1&2&4\\ 2&3&5\\ 4}
  \ \ \ ; \ \ \ 
  T_3 = \tableau{10}{1&2&4\\ 2&3\\ 4&5} \ .
  \]
  It follows from Theorem~\ref{thm:mininc_urt_A} and
  Corollary~\ref{cor:lis} that $T_1|_{\{1,2,3,4\}}$ is a unique
  rectification target of type A, from which we deduce that $T_1$,
  $T_2$, and $T_3$ are the only tableaux of straight shapes in their
  jeu de taquin class $[T_1]$.
  
  It follows from this that the increasing tableau of type B defined
  by
  \[
  U = \tableau{10}{1&2&4\\ &3&5}
  \]
  is a unique rectification target.  In fact we have $U^2 = T_1$, so
  if $S$ is any tableau of straight shape of type B that is jeu de
  taquin equivalent to $U$, then $S^2 \in [T_1]$.  Since $S^2$ is a
  tableau of straight shape that is symmetric across the diagonal, we
  deduce that $S^2 = T_1$ and $S=U$.
\end{example}

\subsection{Weak $K$-Knuth equivalence}\label{sec:weak_kknuth}

Jeu de taquin equivalence of increasing tab\-leaux of type A is
governed by $K$-Knuth equivalence of their reading words.  The
following relation plays the analogous role for increasing tableaux of
type B.

\begin{defn}\label{defn:weak_kknuth}
  Define the {\em weak $K$-Knuth equivalence relation\/} on words of
  integers, denoted by $\equivB$, as the symmetric transitive closure
  of the following basic relations.  For any words of integers $\bu$
  and $\bv$, and integers $a,b,c$ we set
  \[
  \begin{split}
    (\bu,a,a,\bv) &\equivB (\bu,a,\bv) \,, \\
    (\bu,a,b,a,\bv) &\equivB (\bu,b,a,b,\bv) \,, \\
    (\bu,a,b,c,\bv) &\equivB (\bu,a,c,b,\bv) \ \ \ \text{if $b < a <
      c$\,,} \\
    (\bu,a,b,c,\bv) &\equivB (\bu,b,a,c,\bv) \ \ \ \text{if $a < c <
      b$\,,} \ \ and \\
    (a,b,\bu) &\equivB (b,a,\bu) \,.
  \end{split}
  \]
\end{defn}

Compared to the $K$-Knuth relation of Definition~\ref{defn:kknuth},
the weak $K$-Knuth relation allows the first two integers in any word
to be interchanged.  Two weakly increasing tableaux $T_1$ and $T_2$ of
type B are weakly $K$-Knuth equivalent, written $T_1 \equivB T_2$, if
a reading word of $T_1$ is weakly $K$-Knuth equivalent to a reading
word of $T_2$.  By Lemma~\ref{lem:tabclass} this defines an
equivalence relation on the set of all weakly increasing tableaux of
type B that admit reading words.

\begin{lemma}\label{lem:resolve_B}
  Let $T$ be a dotted increasing tableau of type B.  Then all
  resolutions of $T$ are weakly $K$-Knuth equivalent.
\end{lemma}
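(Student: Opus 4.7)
The plan is to reduce the statement to the type A result, Lemma~\ref{lem:resolve_A}, via the doubling construction from the beginning of Section~\ref{sec:maxog}. Given a dotted increasing tableau $T$ of type B on a shifted skew shape $\theta \subset \Delta$, I would first form its doubling $T^2$, a dotted increasing tableau of type A on the skew Young diagram $\theta^2 \subset \N^2$. The dots of $T$ are placed symmetrically in $T^2$: an off-diagonal dot at $[r,c]$ with $r<c$ becomes a mirrored pair of dots at $[r,c]$ and $[c,r]$, while a diagonal dot at $[r,r]$ remains a single dot on the diagonal of $T^2$.

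Next I would check that for any resolution $R$ of $T$, the symmetric doubling $R^2$ (obtained by resolving mirrored dot pairs identically, and resolving a diagonal dot the same way in $T$ and in $T^2$) is a resolution of $T^2$. The key point is that the two legal values for a diagonal dot of $T^2$ agree with the two legal values for the corresponding dot of $T$: by the diagonal symmetry of $T^2$, the maximum of the boxes immediately above and to the left in $T^2$ equals the box immediately above in $T$, and the minimum of the boxes immediately below and to the right in $T^2$ equals the box immediately to the right in $T$. Lemma~\ref{lem:resolve_A} applied to $T^2$ then yields $R^2 \equiv (R')^2$ for any two resolutions $R, R'$ of $T$.

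The harder step is converting the type A equivalence $R^2 \equiv (R')^2$ into the weak equivalence $R \equivB R'$. For this I would pick reading words of $R^2$ and $(R')^2$ of the specific form $(\bar{\bw}, \bw)$, where $\bw$ is a reading word of $R$ (which exists by Lemma~\ref{lem:readword}) and $\bar{\bw}$ is a reading word of the strictly-below-diagonal part of $R^2$, arranged to mirror the order of $\bw$ across the diagonal. A chain of basic $K$-Knuth moves between such words for $R^2$ and $(R')^2$ can then be organized so that moves inside the upper half translate directly to basic $\equivB$ moves of the first four types listed in Definition~\ref{defn:weak_kknuth}, moves inside the lower half mirror these, and moves that straddle the diagonal correspond to transpositions of the first two letters of $\bw$, which is exactly the fifth basic relation in the definition of $\equivB$.

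The main obstacle will be the bookkeeping in the last step: arranging the symmetric reading words for $R^2$ and $(R')^2$ so that every diagonal-crossing $K$-Knuth move decomposes into either a symmetric pair of ordinary $\equivB$ moves or a single first-two-letter swap, and verifying that no further relations creep in. A careful synchronization between the reading orders of the two halves of $R^2$, together with the hook-closedness of $\theta$ in $\Delta$, should make the diagonal boundary tractable and yield $R \equivB R'$ as required.
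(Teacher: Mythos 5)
Your strategy has a genuine gap at the step you yourself identify as the hardest one: passing from the type A equivalence $R^2 \equiv (R')^2$ back to the weak equivalence $R \equivB R'$. This is essentially one direction of Conjecture~\ref{conj:kknuthAB}, which the paper leaves open precisely because no one knows how to carry out the ``bookkeeping'' you describe. The difficulty is that a chain of basic $K$-Knuth moves connecting a reading word of $R^2$ to one of $(R')^2$ gives no control over the intermediate words: they need not be reading words of any tableau, need not be symmetric across the diagonal, and individual moves need not localize to the upper half, the lower half, or a first-two-letter swap. (Recall also Remark~\ref{rmk:kknuth_difficult}: even deciding $K$-Knuth equivalence may require passing through longer, unstructured words.) So there is no reason the chain can be ``organized'' as you propose, and without that the doubling reduction does not close. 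The parts of your argument before this step are fine --- $T^2$ is a dotted increasing tableau of type A, and the symmetric doubling $R^2$ of a resolution $R$ of $T$ is indeed a resolution of $T^2$, with the diagonal dots behaving as you say --- but they only deliver a statement about $\equiv$ on doubled words, not about $\equivB$.

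The paper avoids doubling entirely here. A skew shifted shape is itself hook-closed, so Theorem~\ref{thm:resolve} applies directly to $T$ and handles every dot that lies strictly above the diagonal (those are exactly the dots sitting to the right of an integer and above another integer); since $K$-Knuth equivalence implies weak $K$-Knuth equivalence, this reduces the problem, via Lemma~\ref{lem:interval}(b), to two resolutions differing at a single \emph{diagonal} dot. That remaining case is settled by a short explicit computation with reading words beginning $(b,a,b,d,c,y)$ versus $(c,a,b,d,c,y)$, which is where the extra relation $(a,b,\bu) \equivB (b,a,\bu)$ is genuinely used. If you want to salvage your approach, the realistic fix is to adopt this structure: invoke Theorem~\ref{thm:resolve} on the shifted shape itself rather than on its doubling, and then do the one-diagonal-dot computation by hand in the weak $K$-Knuth monoid.
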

\begin{proof}
  Let $T'$ and $T''$ be resolutions of $T$.  By
  Theorem~\ref{thm:resolve} and Lemma~\ref{lem:interval}(b) we may
  assume that $T'$ and $T''$ agree except for a single dotted box $\al$
  which lies on the diagonal and is surrounded by five boxes
  satisfying $a < b \leq y \leq c < d$:
  \[
  \tableau{14}{a&b&y\\ &\bullet&c\\ &&d}
  \]
  We can assume that $T'(\al)=b$ and $T''(\al)=c$.  If $T$ contains a
  dot in the same row as $y$, then we may further assume that $T'$ and
  $T''$ replace this dot with the maximum of the boxes above and to
  the left of the dot.  And if $T$ contains a box in the same column
  as $y$ but strictly north of $y$, then we may assume that $T'$ and
  $T''$ replace this dot with the minimum of the boxes below and to
  the right of the dot.  In this situation we must have $b \leq y <
  c$, and we can choose reading words for $T'$ and $T''$ which agree
  except that the word of $T'$ starts with $(b,a,b,d,c,y)$ while the
  word of $T''$ starts with $(c,a,b,d,c,y)$.  If $y=b$ then set
  $y'=c$, otherwise set $y'=y$.  Then we have
  \[
  \begin{split}
    (b,a,b,d,c,y) 
    &\equivB (a,b,b,d,c,y)
    \equiv (a,b,d,c,y)
    \equiv (d,c,a,b,y')\\
    &\equiv (d,c,c,a,b,y')
    \equivB (c,d,c,a,b,y')
    \equiv (c,a,b,d,c,y) \,,
  \end{split}
  \]
  as required.
\end{proof}

\begin{thm}\label{thm:jdt_weak_kknuth}
  Let $T$ and $T'$ be increasing tableaux of type B.  Then the reading
  words of $T$ and $T'$ are weakly $K$-Knuth equivalent if and only if
  $T$ and $T'$ are jeu de taquin equivalent for $\Delta$.
\end{thm}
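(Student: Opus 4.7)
The proof will follow the structure of Theorem~\ref{thm:jdt_kknuth}, with Lemma~\ref{lem:resolve_B} replacing Lemma~\ref{lem:resolve_A}, and with one additional basic relation to handle.

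For the ``if'' direction I plan to mimic the type A interpolation argument. Given $T' = \jdt_C(T)$ with values in $[a,b]$, define the dotted tableaux $T_a = [C \to \bullet] \cup T$ and $T_{i+1} = \swap_{i,\bullet}(T_i)$ for $a \leq i \leq b$. For each $i$, let $T_i'$ be the resolution of $T_i$ obtained by replacing every dot with the minimum of the boxes below and to the right. Since $T_i'$ is also a resolution of $T_{i+1}$, Lemma~\ref{lem:resolve_B} gives $T_i' \equivB T_{i+1}'$, and chaining these equivalences yields $T \equivB T'$.

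For the ``only if'' direction I will associate to each word $\bu = (u_1,\ldots,u_p)$ an increasing tableau $T_B(\bu)$ of type B whose reading word is $\bu$: place $u_j$ at the antidiagonal box $[N-j+1, N+j-1] \in \Delta$ for a sufficiently large integer $N$. Since these antidiagonal boxes are pairwise incomparable, $T_B(\bu)$ is automatically a valid increasing tableau. The plan is then to establish (a) that any increasing tableau $S$ of type B with reading word $\bu$ is jdt-equivalent to $T_B(\bu)$ (by applying a sequence of slides to $S$ that extract its entries onto the antidiagonal), and (b) that $T_B(\bu)$ is jdt-equivalent to $T_B(\bu')$ whenever $\bu'$ arises from $\bu$ by one of the five basic relations of Definition~\ref{defn:weak_kknuth}. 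For each of the first four basic relations, the affected antidiagonal boxes can be chosen to avoid the diagonal of $\Delta$, so the local verification reduces to the one carried out for type A in the proof of Theorem~\ref{thm:jdt_kknuth}.

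The main obstacle will be verifying (b) for the fifth basic relation $(a,b,\bw) \equivB (b,a,\bw)$, where the positions of $u_1$ and $u_2$ in $T_B(\bu)$ are the diagonal box $[N,N]$ and its incomparable neighbor $[N-1,N+1]$. Here I plan to verify the jdt equivalence by a direct local computation: a forward slide from the maximal box $[N-1,N]$ of $\la$, followed by further slides rectifying the resulting configuration through the triangle $\{[N-1,N],[N-1,N+1],[N,N]\}$, interchanges the values at $[N,N]$ and $[N-1,N+1]$ without disturbing the entries $u_3,u_4,\ldots$ farther up the antidiagonal. This step exploits the shifted geometry of $\Delta$ in a way that has no analogue in type A, and it is precisely the ingredient which distinguishes $\equivB$ from the ordinary $K$-Knuth relation and justifies the introduction of the fifth basic relation.
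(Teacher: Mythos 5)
Your proof follows the same route as the paper's, which for this theorem literally consists of the sentence ``repeat the argument of Theorem~\ref{thm:jdt_kknuth} with Lemma~\ref{lem:resolve_A} replaced by Lemma~\ref{lem:resolve_B}'': your ``if'' direction is exactly that interpolation-by-resolutions argument, and your ``only if'' direction is the intended transplant of the antidiagonal construction $T(\bu)$ into $\Delta$, with the correct identification that the new fifth relation $(a,b,\bw)\equivB(b,a,\bw)$ must be realized by slides through the diagonal corner box $[N,N]$ (a short computation with forward slides from $[N-1,N]$ and $[N-1,N-1]$ does bring both $T_B(a,b,\bw)$ and $T_B(b,a,\bw)$ to a common tableau without touching the boxes carrying $\bw$). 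One small correction: your claim that the first four relations can always be verified away from the diagonal is not accurate. When such a relation is applied with an empty or one-letter prefix, the affected boxes include $[N,N]$, whose set of neighbors in $\Delta$ is smaller than in $\N^2$, so the type A local verification does not apply verbatim there; those cases still go through, but by the same kind of diagonal-local slide computation you describe for the fifth relation rather than by citation of the type A case.
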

\begin{proof}
  This follows from the same argument as proves
  Theorem~\ref{thm:jdt_kknuth}, except that Lemma~\ref{lem:resolve_A}
  is replaced with Lemma~\ref{lem:resolve_B}.
\end{proof}

\begin{cor}
  Let $T$ be an increasing tableau of straight shape of type B.  The
  following are equivalent.  {\rm(a)} $T$ is a unique rectification
  target for $\Delta$.  {\rm(b)} If $T'$ is any increasing tableau of
  straight shape such that $T' \equivB T$, then $T' = T$.
\end{cor}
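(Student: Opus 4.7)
The plan is to mirror the proof of Corollary \ref{cor:urt_kknuth} in the type A setting, substituting Theorem \ref{thm:jdt_weak_kknuth} for Theorem \ref{thm:jdt_kknuth}. Since both Lemma \ref{lem:urt_class} and Theorem \ref{thm:jdt_weak_kknuth} are already established, the corollary is essentially a formal consequence of these two facts, so I do not expect any real obstacle; the substantive work was done in the proof of Theorem \ref{thm:jdt_weak_kknuth}.

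For the implication (a) $\Rightarrow$ (b), I would assume that $T$ is a unique rectification target for $\Delta$ and invoke Lemma \ref{lem:urt_class} to conclude that $T$ is the only tableau of straight shape in its jeu de taquin class $[T]_\Delta$. Now if $T'$ is any increasing tableau of straight shape satisfying $T' \equivB T$, then Theorem \ref{thm:jdt_weak_kknuth} shows that $T'$ is jeu de taquin equivalent to $T$ for $\Delta$, i.e.\ $T' \in [T]_\Delta$. The uniqueness just established then forces $T' = T$.

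For the converse (b) $\Rightarrow$ (a), I would first show that $T$ is the only tableau of straight shape in $[T]_\Delta$: indeed, any such $T'$ has a reading word weakly $K$-Knuth equivalent to that of $T$ by Theorem \ref{thm:jdt_weak_kknuth}, so $T' \equivB T$, and (b) then yields $T' = T$. To upgrade this to the unique rectification target property, I would take any increasing tableau $S$ admitting $T$ as a rectification, so that $T \in [S]_\Delta$ and hence $[S]_\Delta = [T]_\Delta$. Any rectification $T''$ of $S$ lies in $[S]_\Delta = [T]_\Delta$ and has straight shape, so $T'' = T$ by the previous step. This verifies that $T$ is a unique rectification target.
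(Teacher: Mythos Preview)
Your argument is correct and follows exactly the approach indicated in the paper, which simply cites Theorem~\ref{thm:jdt_weak_kknuth} and Lemma~\ref{lem:urt_class}; you have merely spelled out the routine details of both implications.
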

\begin{proof}
  This follows from from Theorem~\ref{thm:jdt_weak_kknuth} and
  Lemma~\ref{lem:urt_class}.
\end{proof}

Given a word of integers $\bu = (u_1,u_2,\dots,u_k)$, we let
$\bu^\dagger$ denote this word in reverse order, and we let
$\bu^\dagger \bu = (u_k,\dots,u_2,u_1,u_1,u_2,\dots,u_k)$ denote the
composition of the two words.  It follows from the definitions that,
if $\bu$ and $\bv$ are weakly $K$-Knuth equivalent words, then
$\bu^\dagger \bu$ and $\bv^\dagger \bv$ are $K$-Knuth equivalent.
Computer experiments suggest that the converse is also true.

\begin{conj}\label{conj:kknuthAB}
  Let $\bu$ and $\bv$ be words of integers.  Then we have $\bu \equivB
  \bv$ if and only if $\bu^\dagger \bu \equiv \bv^\dagger \bv$.
\end{conj}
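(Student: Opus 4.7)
The plan is to prove each direction of the biconditional separately. For the forward direction $\bu \equivB \bv \implies \bu^\dagger \bu \equiv \bv^\dagger \bv$, I would run through the basic relations of $\equivB$ case by case. The idempotent and braid moves of Definition~\ref{defn:weak_kknuth} lift directly to $\equiv$-moves on each half of $\bu^\dagger \bu$. The Knuth-type relations (iii) and (iv) lift via the observation that the reversal of a triple interchanges the two Knuth conditions of Definition~\ref{defn:kknuth}: if $b < a < c$, then the reversed triple $(c,b,a)$ appearing in the reversed half of the doubled word is equivalent to $(b,c,a)$ by relation (iv) applied to $(b,c,a)$ with $p = b$, $q = c$, $r = a$ and condition $p < r < q$, and the symmetric argument handles instances of relation (iv). The first-letter swap, the new basic relation in $\equivB$, is absorbed at the palindrome's center via
\[
(b,a,a,b) \;\equiv\; (b,a,b) \;\equiv\; (a,b,a) \;\equiv\; (a,b,b,a),
\]
using idempotent, braid, and idempotent in sequence.

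The reverse direction is the main content of the conjecture, and my approach is to identify a complete invariant of $\equivB$-classes that is recoverable from the $\equiv$-class of $\bu^\dagger \bu$. I would construct a shifted Hecke insertion procedure $\bu \mapsto P_{\mathrm{sh}}(\bu)$ valued in increasing shifted tableaux such that $P_{\mathrm{sh}}(\bu) = P_{\mathrm{sh}}(\bv)$ iff $\bu \equivB \bv$; this plays the role for $\equivB$ that ordinary Hecke insertion $P$ plays for $\equiv$. The key compatibility to establish is then
\[
P(\bu^\dagger \bu) \;=\; P_{\mathrm{sh}}(\bu)^2,
\]
where $T \mapsto T^2$ is the doubling of Section~\ref{sec:maxog}. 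Granted this identity, the reverse direction follows immediately: $\bu^\dagger \bu \equiv \bv^\dagger \bv$ forces $P_{\mathrm{sh}}(\bu)^2 = P_{\mathrm{sh}}(\bv)^2$, and since the doubling is patently injective on increasing tableaux, $P_{\mathrm{sh}}(\bu) = P_{\mathrm{sh}}(\bv)$, whence $\bu \equivB \bv$.

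The main obstacle is establishing the compatibility identity $P(\bu^\dagger \bu) = P_{\mathrm{sh}}(\bu)^2$. I would try induction on $|\bu|$: writing $\bu = \bu' \cdot (x)$, one must compare the effect of appending $x$ on the right and prepending $x$ on the left of $\bu'^\dagger \bu'$ against the effect of appending $x$ under shifted insertion followed by doubling. The merging and bumping behavior of Hecke insertion complicates this relative to the cohomological analogue (Sagan--Worley shifted insertion), and the symmetric structure of the growing tableau must be tracked at every bumping step. An alternative route is to pass through the jeu de taquin framework via Theorems~\ref{thm:jdt_kknuth} and~\ref{thm:jdt_weak_kknuth}: both sides of the conjecture become statements about jeu de taquin classes of antidiagonal tableaux, and via~(\ref{eqn:doubling}) the reverse direction reduces to proving that the doubling $T \mapsto T^2$ is injective on jeu de taquin classes for $\Delta$. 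One attack would be to show that every type A jeu de taquin equivalence $S^2 \sim T^2$ between symmetric tableaux can be reorganized into a sequence of diagonal-symmetric slides, each of which restricts, by~(\ref{eqn:doubling}), to a slide in type B on $S$ and $T$.
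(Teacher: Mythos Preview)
The statement you are attempting to prove is a \emph{conjecture} in the paper; the authors do not give a proof. They observe (just before stating it) that the forward direction ``follows from the definitions,'' and your case-by-case verification of that direction is in line with what they have in mind. The reverse direction is left open, supported only by computer experiments.

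Your proposed attack on the reverse direction has a genuine gap. You want a shifted Hecke insertion $P_{\mathrm{sh}}$ with $P_{\mathrm{sh}}(\bu) = P_{\mathrm{sh}}(\bv) \iff \bu \equivB \bv$, by analogy with ordinary Hecke insertion $P$ for $\equiv$. But ordinary Hecke insertion does \emph{not} satisfy $P(\bu) = P(\bv) \iff \bu \equiv \bv$: Example~\ref{ex:non_urt_A} together with Theorem~\ref{thm:jdt_kknuth} exhibits two distinct increasing tableaux of straight shape whose row words are $K$-Knuth equivalent, and inserting each row word recovers the original tableau, so $P$ distinguishes $K$-Knuth equivalent words. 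This is exactly the failure of unique rectification that drives the whole paper. The same obstruction applies in type~B (Example~\ref{ex:non_urt_B}), so no deterministic insertion into a single increasing shifted tableau can be a complete invariant of $\equivB$. Consequently the identity $P(\bu^\dagger\bu) = P_{\mathrm{sh}}(\bu)^2$ cannot hold as stated: the left side depends only on the $\equiv$-class of $\bu^\dagger\bu$ only if $P$ is constant on $\equiv$-classes, which it is not.

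Your alternative route through jeu de taquin is closer to a viable formulation: by Theorems~\ref{thm:jdt_kknuth} and~\ref{thm:jdt_weak_kknuth} and equation~(\ref{eqn:doubling}), the reverse direction is equivalent to injectivity of $T \mapsto [T^2]_{\N^2}$ on jeu de taquin classes for $\Delta$. But the suggestion that any type~A equivalence $S^2 \sim T^2$ can be ``reorganized into diagonal-symmetric slides'' is precisely the hard content of the conjecture, and you have not indicated a mechanism for producing such a reorganization. The paper's authors presumably saw no such mechanism either, which is why this remains a conjecture.
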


If this conjecture is true, then the converse of
Proposition~\ref{prop:translateAB}(a) holds.  Furthermore, if $U$ is
any increasing tableau of straight shape of type B, then
Conjecture~\ref{conj:kknuthAB} implies that $U$ is a unique
rectification target for $\Delta$ if and only if $U^2$ is the only
tableau in its jeu de taquin class $[U^2]_{\N^2}$ that has straight
shape and is {\em symmetric\/} across the diagonal.

\subsection{The Pieri rule for $\Gamma(\Delta)$}

Define a {\em Pieri word of type B\/} to be a sequence of integers $a
= (a_1, a_2, \dots, a_\ell)$ such that each entry $a_i$ is either
smaller than or equal to all predecessors $a_j$ with $j<i$, or larger
than or equal to all predecessors.  We also define a {\em Pieri
  tableau of type B} to be any increasing tableau $T$ of type B such
that the row word of $T$ is a Pieri word of type B.  Such a tableau is
called a {\em KOG-tableau\/} in \cite{buch.ravikumar:pieri}.  Finally,
define the {\em range\/} of a tableau to be the set of values
contained in its boxes.  The following tableau is a Pieri tableau of
type B with range $[1,6] = \{1,2,3,4,5,6\}$.
\[
\tableau{10}{
  [lh]&[t]&{}&{}&{}&{}&{}&[a]1&6\\
  &[lb]&&&&&&[a]5\\
  &&[lb]&&&[a]2&5\\
  &&&2&3&4
}\medskip
\]
The following result shows that multiplication with single-row shapes
in the combinatorial $K$-theory ring $\Gamma(\Delta)$ is compatible
with the Pieri formula for maximal orthogonal Grassmannians proved in
\cite{buch.ravikumar:pieri}.

\begin{cor}\label{cor:pieri_B}
  Let $\la$ be a shifted Young diagram and let $p$ be a positive
  integer.  Then we have
  \[
  G_{(p)} \cdot G_\la = \sum_\nu c^\nu_{p,\la}\, G_\nu
  \]
  in the ring $\Gamma(\Delta)$, where the sum is over all shifted
  Young diagrams $\nu \supset \la$, and $c^\nu_{p,\la}$ is the number
  of Pieri tableaux of type B with shape $\nu/\la$ and range $[1,p]$.
\end{cor}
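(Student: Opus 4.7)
The plan is to apply Corollary~\ref{cor:lrrule} with the unique rectification target $M_{(p)}$ of shape $(p)$, which is valid by Corollary~\ref{cor:mininc_urt_B}, to obtain
\[
c^\nu_{p,\la} \;=\; \#\{T : \sh(T)=\nu/\la,\ T\text{ rectifies to }M_{(p)}\}.
\]
Since $M_{(p)}$ is the single row with entries $1,2,\ldots,p$, Theorem~\ref{thm:jdt_weak_kknuth} rephrases this condition as the row word of $T$ being weakly $K$-Knuth equivalent to $(1,2,\ldots,p)$. Equivalently, by Corollary~\ref{cor:mininc_urt_B} and a direct computation on the hook-shaped $M_{(p)}^2$, the condition becomes $w(T^2)=w(M_{(p)}^2)=(1,p+1)$. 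It therefore suffices to prove that an increasing tableau $T$ of type B has row word weakly $K$-Knuth equivalent to $(1,2,\ldots,p)$ if and only if $T$ is a Pieri tableau of type B with range $[1,p]$.

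I would prove the forward implication by induction on the length of a Pieri word. Any duplicated value can be brought adjacent to its copy via repeated application of the third and fourth basic relations of Definition~\ref{defn:weak_kknuth} (the interchange relations), and then eliminated through $(\bu,a,a,\bv)\equivB(\bu,a,\bv)$. This reduces to Pieri words of length exactly $p$ using each value in $[1,p]$ once; such a word is a shuffle of the decreasing run $(a_1,a_1-1,\ldots,1)$ and the increasing run $(a_1+1,\ldots,p)$ for some starting value $a_1$. Adjacent transpositions within such shuffles can be realized by combining the swap-first-two relation with moves of the form $(a,b,c)\equivB(a,c,b)$ when $b<a<c$, so that any such shuffle can be transformed into $(1,2,\ldots,p)$.

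For the reverse implication, I would note that $M_{(p)}$ is itself a Pieri tableau; since Theorem~\ref{thm:jdt_weak_kknuth} identifies jeu de taquin equivalence with weak $K$-Knuth equivalence of row words, it suffices to prove that the Pieri-tableau property is preserved under every jeu de taquin slide between increasing tableaux of type B. I would verify this by direct local case analysis of a single forward slide $T\mapsto\jdt_C(T)$, using the hook-closed structure of shifted skew shapes together with the strict increase along rows and columns to track how the running minima and maxima of the row word evolve. The range condition $[1,p]$ is preserved automatically, since the set of values used in a word is invariant under each of the five basic weak $K$-Knuth relations.

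The main obstacle is the reverse implication, which encodes the essential combinatorial content of the orthogonal Pieri rule: establishing the invariance of the Pieri property under jeu de taquin slides requires a careful local analysis of how a single slide rearranges the entries of a type B increasing tableau within a shifted skew shape.
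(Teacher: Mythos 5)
Your overall plan is sound and it isolates the same key combinatorial fact as the paper, namely that the jeu de taquin class $[M_{(p)}]_\Delta$ consists exactly of the Pieri tableaux of type B with range $[1,p]$. But you take a much harder road to it, and the step you flag as ``the main obstacle'' is precisely where you miss the short argument. The paper never analyzes a jeu de taquin slide locally: it observes that the \emph{Pieri-word property itself} is preserved by each of the five basic relations of Definition~\ref{defn:weak_kknuth} (a one-line inspection --- duplication, the $aba\equivB bab$ move, the two interchanges, and the swap of the first two letters all manifestly preserve the condition that every letter is a running weak min or weak max, and they preserve the set of values), so by Theorem~\ref{thm:jdt_weak_kknuth} the set $P$ of Pieri tableaux with range $[1,p]$ is closed under slides. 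Since $M_{(p)}$ is the only straight-shape tableau in $P$ (for a straight shifted shape with at least two rows, the second entry of the penultimate row is neither a running min nor a running max), every class contained in $P$ rectifies inside $P$ to $M_{(p)}$, giving $P=[M_{(p)}]_\Delta$ in both directions at once. This makes your proposed local case analysis of forward slides unnecessary, and it also makes your direct word-gymnastics for the implication ``Pieri $\Rightarrow$ $\equivB(1,\dots,p)$'' unnecessary. If you do want to keep that word-level reduction, note a concrete inaccuracy: the interchange relations alone cannot merge non-adjacent duplicates --- e.g.\ in the Pieri word $(2,1,2)$ neither interchange applies, and you must invoke $(a,b,a)\equivB(b,a,b)$ --- so your stated mechanism for eliminating repeated values is incomplete as written. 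With the simplification above, the rest of your setup (Corollary~\ref{cor:lrrule} applied to the unique rectification target $M_{(p)}$, and the identification of coefficients via \S\ref{sec:comb_ktheory}) matches the paper and is correct.
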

\begin{proof}
  An inspection of the basic relations of
  Definition~\ref{defn:weak_kknuth} shows that, if $\bu$ and $\bv$ are
  weakly $K$-Knuth equivalent words, then $\bu$ is a Pieri word of
  type B if and only if $\bv$ is a Pieri word of type B.
  Theorem~\ref{thm:jdt_weak_kknuth} therefore implies that the set $P$
  of all Pieri tableaux of type B with range $[1,p]$ is closed under
  jeu de taquin slides.  Since $M_{(p)}$ is the only tableau of
  straight shape in $P$, we deduce that $P$ is equal to the jeu de
  taquin class $[M_{(p)}]_\Delta$.  The corollary follows from this.
\end{proof}

\begin{remark}
  For each permutation $w \in \Sigma$ we can define an element $B_w =
  \sum_\la b_{w,\la}\, G_\la \in \Gamma(\Delta)$, where $b_{w,\la}$ is
  the number of increasing tableaux $T$ of shape $\la$ such that
  $w(T^2) = w$.  Notice that $B_w \neq 0$ if and only if $w^{-1} = w$.
  These elements also satisfy the identity $B_w \cdot G_\la = \sum_\nu
  c^\nu_{w,\la}\, G_\nu$, where $c^\nu_{w,\la}$ is the number of
  increasing tableaux $T$ of shape $\nu/\la$ such that $w(T^2) = w$,
  which is analogous to Corollary~\ref{cor:sgroth_prod}.  It is
  interesting to ask if the elements $B_w$ have any geometric meaning
  in the $K$-theory of a maximal orthogonal Grassmannian.
\end{remark}



\providecommand{\bysame}{\leavevmode\hbox to3em{\hrulefill}\thinspace}
\providecommand{\MR}{\relax\ifhmode\unskip\space\fi MR }
\providecommand{\MRhref}[2]{%
  \href{http://www.ams.org/mathscinet-getitem?mr=#1}{#2}
}
\providecommand{\href}[2]{#2}

\end{document}